\documentclass[12pt]{article}

\usepackage{graphicx}%
\usepackage{multirow}%
\usepackage{amsmath,amssymb,amsfonts}%
\usepackage{amsthm}%
\usepackage{mathrsfs}%
\usepackage[title]{appendix}%
\usepackage{textcomp}%
\usepackage{manyfoot}%
\usepackage{booktabs}%
\usepackage{algorithm}%
\usepackage{algorithmicx}%
\usepackage{algpseudocode}%
\usepackage{listings}%
\usepackage{enumitem}
\usepackage{ulem}
\usepackage[dvipsnames]{xcolor}

\theoremstyle{thmstyleone}%
\newtheorem{theorem}{Theorem}
\newtheorem{lemma}[theorem]{Lemma}
\newtheorem{cor}[theorem]{Corollary}
\newtheorem{proposition}[theorem]{Proposition}%
\newtheorem{claim}{Claim}

\theoremstyle{thmstyletwo}%
\newtheorem{example}{Example}%

\theoremstyle{thmstylethree}%
\newtheorem{definition}{Definition}%

\begin{document}

\begin{center}
	{\Large Perturbation Method in Musielak-Orlicz Sequence Spaces}
\end{center}

\begin{center}
	{\sc Pando Georgiev},\\ 
	Institute of Mathematics and Informatics, Bulgarian Academy of Sciences, 
	8 Acad. Georgi Bonchev Str., Sofia, 1113, Bulgaria, pandogeorgiev2020@gmail.com\\
	{\sc Vasil Zhelinski},\\
	Faculty of Mathematics and Informatics, Sofia University St. Kliment Ohridski, 5 James Bourchier Blvd., Sofia, 1504, Bulgaria, v.zhelinski@gmail.com\\
	{\sc Boyan Zlatanov}, \\
	Faculty of Mathematics and Informatics, University of Plovdiv Paisii Hilendarski, 24 Tsar Assen Str., Plovdiv, 4000, Bulgaria, bobbyz@uni-plovdiv.bg
\end{center}

{\sc abstract:}We generalize an abstract variational principle in Banach spaces, introduced by Topalova \& Zlateva \cite{Topalova-Zlateva},
	by showing that the set $\mathbb{P}_0$ of perturbations for which a perturbed lower semi-continuous function $f$ is WPMC (Well Posed Modulus Compact) not only contains a dense $G_\delta$ subset, but is also a complement to a $\sigma$-porous subset in a specifically defined positive cone. 
	
	Moreover, if the space is a Musielak-Orlicz sequence space satisfying $\ell_\Phi\cong h_{\Phi}$, then the notion WPMC is replaced by the stronger notion of  Tikhonov well posedness, which is proved to be equivalent to the single-valuedness and upper semi-continuity of the multivalued mapping assigning a parameter to the solution set. 
	
	We give several applications. The first one is 
	that the Musielak-Orlicz sequence spaces have the Radon-Nikodym property and, therefore, are dentable by proving the validity of  Stegall's variational principle. As a consequence we obtain that the duals of Musielak-Orlicz sequence spaces are $w^*$-Asplund. We establish also a sufficient condition for Musielak-Orlicz and Nakano sequence spaces to be  Asplund spaces. 
	The next applications are for  determining the type of the smoothness of certain Musielak-Orlicz, Nakano, and weighted Orlicz sequence spaces. We illustrate by an example
	that it is possible to consider an Orlicz function without the $\Delta_2$ condition,   by a particular choice of the weighted sequence $\{w_n\}_{n=1}^\infty$ to get $\ell_M(w)\cong h_M(w)$ and to be able to apply the main result.

{\bf keywords:} Musielak-Orilcz sequence space, weighted Orlicz sequence spaces, Nakano sequence space, perturbation space, variational principle.

{\bf MSC Classification} 46N10, 35A15, 49J45, 90C48.

\section{Introduction}\label{sec1}
{
A new perturbation tool or variational principle in Orlicz sequence spaces is established by Topalova \& Zlateva in \cite{Topalova-Zlateva} (see Theorem 2). As an application, the authors found the best possible order of the reminder of a differentiable bump function in such spaces.

The variational principle in \cite{Topalova-Zlateva} is extended here (by adding a stronger condition) in two directions: 1) 
the  dense $G_\delta$ subset can be replaced by a complement of $\sigma$-porous set,  2)  if the space 
is a Musielak-Orlicz sequence space satisfying $\ell_\Phi\cong h_{\Phi}$, then the notion well-posed modulus compact (WPMC) minimization problem is replaced by the stronger Tikhonov well posedness, if the domain of the function under consideration is in the positive octane of the space. This is proved to be equivalent to the single-valuedness and upper semi-continuity of the multivalued mapping assigning a parameter to the solution set.

As an application, we prove the Stegall variatioinal principle in Musielak-Orlicz sequence spaces satisfying 
$\ell_\Phi\cong h_\Phi$, namely, showing that the set of linear perturbations determining Tichonov well posedness contains a dense $G_\delta$ subset. 
As a consequence we get that Musielak-Orlicz sequence spaces have Radon-Nikodym property and, therefore, are dentable and their dual spaces are $w^*$-Asplund spaces. With a different argument we establish a sufficient condition for Musielak-Orlicz and Nakano sequence spaces to be  Asplund spaces.  

Variational principles are widely used in getting of negative results in the renorming theory of Banach spaces \cite{Deville-Godefroy-Zizler}.
With the help of Ekeland variational principle upper estimates for the order differentiablity of norms are 
obtained in $\ell_p$ spaces \cite{Deville-Godefroy-Zizler}.  With the Stegall variational principle lower and upper estimates are obtained about for the order differentiablity of norms and bump functions in certain Banach spaces \cite{Hernandez-Troyanski}.
We give few examples how the method can be used for determining the type of the smoothness of certain Musielak-Orlicz, Nakano, and weighted Orlicz sequence spaces. We illustrate by an example
that it is possible to consider an Orlicz function without the $\Delta_2$ condition, but by a particular choice of the weighted sequence $\{w_n\}_{n=1}^\infty$ to get $\ell_M(w)\cong h_M(w)$ and to be able to apply the main result and its corollaries.

The article is organized as follows. 

In Section 2 we present some well known notions, notations, and results about
abstract perturbation technique, porosity and Musielak-Orlicz sequence spaces.

In section 3 we deliver some auxiliary results for $\sigma-$porous sets, define a perturbation space in a strong sense and prove a stronger abstract variational principle with respect to porosity of the exceptional set. 

We prove the following characterization of Musielak-Orlicz spaces: they satisfy $\ell_\Phi=h_\Phi$ if and only if their {\it modular} 
$$
\tilde{\Phi}(x):=\sum_{i=1}^\infty \Phi_i(x_i)
$$
 has a strong minimum at $0$.

A crucial lemma is proved here: the set of all {\it weighted modulars}
$$
\Big\{\widetilde \Phi_a(x):=\sum_{i=1}^\infty a_i \Phi_i(x_i): a\in \ell_\infty^+ \Big\}
$$
 forms a perturbation space in a strong sense.
 
 We establish a key homeomorphism between $\ell_\Phi^+$ and $\ell_1^+$, which facilitates the proofs.

In section 4 we prove the main result in two main theorems: a variational principle for Tikhonov well posedness in Musielak-Orlicz sequence spaces with respect to $\sigma$-porosity of the exceptional set.

In section 5 we apply the main result for proving the validity of Stegall's variational principle in Musielak-Orlicz sequence spaces. Therefore, such spaces possess the Radon-Nikodym property and their dual spaces are $w^*$-Asplund.

We present applications to differentiability of bump functions in Musielak-Orlicz sequence spaces, and
determine the type of the smoothness in Musielak-Orlicz, Nakano, and weighted Orlicz sequence spaces.
}

In section 6 we give examples of weighted Orlicz spaces without $\Delta_2$-condition for which the main results are applicable.

\section{Preliminaries}

We use the standard Banach space terminology from \cite{Lindenstrauss-Tzafriri}. Let
$(X,\|\cdot\|)$ be a normed space and $(X,\rho)$ be a metric spaces. Whenever we consider a metric $\rho$ in a normed space $(X,\|\cdot\|)$ we assume $\rho(x,y)=\|x-y\|$. We denote by $S_X$ be the unit sphere of $X$, and $B_X$ be the closed unit ball of $X$. 
We will denote by $\mathbb{N}$ and $\mathbb{R}$ the sets of the natural and real numbers, respectively.
For $K>0$ we will denote $KB_{(X,\|\cdot\|)}=\{x\in X:\|x\|\leq K\}$.

\subsection{Abstract Perturbation Technique and Porosity}

The main technique of \cite{Ivanov-Zlateva} was refined in \cite{Topalova-Zlateva}, in order to be applied in the context of Orlicz sequence spaces.

Let $(X,\|\cdot\|)$ be a Banach space and let $S$ be a nonempty subset in $X$. By $\alpha(S)$ the 
 index of non-compactness of $S$ is denoted (see \cite{Kuratowski}), that is the infimum of all $\varepsilon >0$ for which $S$ admits finite (or compact) $\varepsilon$ net.

For a function $f:X\to \mathbb{R}\cup \{+\infty\}$ and $\varepsilon\geq 0$ we will use the notation
$$
\Omega_f^S(\varepsilon)=\left\{x\in S:f(x)\leq\inf_{S}f+\varepsilon\right\}.
$$
Clearly, $\Omega_f^S(0) = {\rm argmin}_Sf$.

\begin{proposition}(\cite{Topalova-Zlateva})\label{proposition:1}
	For any $f,g:X\to \mathbb{R}\cup \{+\infty\}$, bounded below on $S$, and $\delta>0$ if
	$\Omega_f^S(\delta)\cap \Omega_g^S(\delta)\not=\emptyset$, then
	$\Omega_{f+g}^S(\delta)\subset \left(\Omega_f^S(3\delta)\cap \Omega_g^S(3\delta)\right)$. 	
\end{proposition}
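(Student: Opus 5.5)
The argument is a direct manipulation of infima. Write $m_f=\inf_S f$, $m_g=\inf_S g$ and $m_{f+g}=\inf_S(f+g)$. All three are finite: $f$ and $g$ are bounded below on $S$, and the nonemptiness of $\Omega_f^S(\delta)\cap\Omega_g^S(\delta)$ yields a point $z\in S$ with $f(z)$ and $g(z)$ finite. We must show that every $x\in\Omega_{f+g}^S(\delta)$ satisfies $f(x)\le m_f+3\delta$ and $g(x)\le m_g+3\delta$. By symmetry it suffices to prove the bound for $f$.

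The first step bounds $m_{f+g}$ from above using the common point $z$. Since $f(z)\le m_f+\delta$ and $g(z)\le m_g+\delta$, we get
$$
m_{f+g}\le f(z)+g(z)\le m_f+m_g+2\delta .
$$
Trivially we also have $m_{f+g}\ge m_f+m_g$, though this lower bound is not needed.

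The second step takes $x\in\Omega_{f+g}^S(\delta)$, so $x\in S$ and $f(x)+g(x)\le m_{f+g}+\delta\le m_f+m_g+3\delta$. Both $f(x)$ and $g(x)$ are then finite, being bounded below and having a finite sum. Since $x\in S$, we have $g(x)\ge m_g$, and therefore
$$
f(x)\le m_f+m_g+3\delta-g(x)\le m_f+3\delta,
$$
which gives $x\in\Omega_f^S(3\delta)$. The same argument with the roles of $f$ and $g$ exchanged gives $x\in\Omega_g^S(3\delta)$, and hence the claimed inclusion.

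There is no real obstacle here. The only point needing care is the extended-real arithmetic: we must confirm that $m_f$, $m_g$ and the values at $x$ are finite before subtracting, and this is guaranteed by the boundedness-below hypothesis together with the nonempty intersection.
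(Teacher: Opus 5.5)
The paper states this proposition without proof, quoting \cite{Topalova-Zlateva}. Your core computation is the standard argument and is correct once $\inf_S f$ and $\inf_S g$ are known to be finite: first $\inf_S(f+g)\le f(z)+g(z)\le \inf_S f+\inf_S g+2\delta$, then $f(x)\le \inf_S(f+g)+\delta-g(x)\le \inf_S f+3\delta$. The flaw is in the step you flagged as the only delicate one, because your justification of finiteness is circular. Membership $z\in\Omega_f^S(\delta)$ means $f(z)\le \inf_S f+\delta$. That yields $f(z)<+\infty$ only if you already know $\inf_S f<+\infty$, and boundedness below excludes only the value $-\infty$.

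This cannot be repaired from the stated hypotheses, because the degenerate case is a counterexample to the proposition as literally written. If $f\equiv+\infty$ on $S$, then $\inf_S f=+\infty$, so $\Omega_f^S(\varepsilon)=S$ for every $\varepsilon>0$, and likewise $\Omega_{f+g}^S(\delta)=S$. Take $S=X=\mathbb{R}$, $f\equiv+\infty$ and $g(x)=|x|$. All hypotheses hold, since the intersection is $[-\delta,\delta]$. Yet $\Omega_{f+g}^S(\delta)=\mathbb{R}\not\subset[-3\delta,3\delta]=\Omega_g^S(3\delta)$. So you must add, as an explicit hypothesis, that neither $f$ nor $g$ is identically $+\infty$ on $S$, i.e.\ $\inf_S f,\inf_S g\in\mathbb{R}$. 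This is clearly the intended reading, since the definition of WPMC already presupposes a finite infimum. With that assumption a point $z$ of the intersection does satisfy $f(z),g(z)<+\infty$, and the rest of your proof goes through verbatim.
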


\begin{definition}\label{definition:1}(\cite{Ivanov-Zlateva-13})
	Let $S\subset X$ and $f:S\to \mathbb{R}\cup \{+\infty\}$. We say that the minimization problem
	$(f,S)=\min\{f(x):x\in S\}$ is well-posed modulus compact, abbreviated as $WPMC$, if $\inf_S f\in \mathbb{R}$ and
	$\Omega_f^S(0)$ is nonempty and compact set such that if $\{x_n\}_{n=1}^\infty$ is a minimizing
	sequence: $\lim_{n\to\infty}f(x_n)=\min_S f$, then
	$\lim_{n\to\infty}d\left(x_n,\Omega_f^S(0)\right)=0$.
\end{definition}

The meaning of the term $WPMC$ is quite natural. If $\Omega_f^S(0)$ is a singleton,
then we get the well-known notion of Tikhonov well-posedness (see e.g. \cite{Dontchev-Zolezzi}, Chapter I). From a lemma of Kuratowski it follows \cite{Ivanov-Zlateva-13} that for a closed set $S\subset X$ and a proper lower semi-continuous function $f:S\to\mathbb{R}\cup\{+\infty\}$ bounded below on $S$ there holds
\begin{equation}\label{Kuratowski}
(f,S)\ \  \mbox{is}\ \  WPMC\ \ \mbox{if and only if}\ \ \lim_{t\downarrow 0}\alpha\left(\Omega_f^S(t)\right)=0.
\end{equation}

\begin{definition}\label{definition:2}(\cite{Topalova-Zlateva})
	Let $(X,\|\cdot\|)$ be a Banach space and $S\subseteq X$ be a non-empty set. The space $(\mathbb{P},\|\cdot\|_{\mathbb{P}})$ of real continuous, and bounded on $S$ functions, is called a perturbation space on $S$ if
	\begin{enumerate}[label=(d\ref{definition:2}.\roman*)]
		\item\label{lab:d2-1} $\mathbb{P}$ is a convex cone
		\item\label{lab:d2-2} $\mathbb{P}$ is complete with respect to the norm $\|\cdot\|_{\mathbb{P}}$,
		defined on ${\rm span}\{\mathbb{P}\}$, that dominates the uniform convergence on $S$, i.e.,   for some constant $C>0$ there holds 
		$$
		\sup_{x\in S}|f(x)|\leq C\|f\|_{\mathbb{P}}
		$$
		for all $f\in {\rm span}\{\mathbb{P}\}$
		(in other words, $({\rm span}\{\mathbb{P}\},\|\cdot\|_{\mathbb{P}})$ is a Banach space 
		of bounded on $A$ continuous, and $\mathbb{P}$ is its positive cone)
		\item\label{lab:d2-3} for every $\varepsilon>0$ there is $\delta=\delta(\varepsilon)>0$ so that for any $x\in S$ there exists $f\in \mathbb{P}$ (depending on $x$), so that 
		$$
		\|f\|_{\mathbb{P}}\leq \varepsilon,\ \ 
		x\in \Omega_{f}^S\left(\delta\right),\ \mbox{and}\ \ 
		\alpha(\Omega_{f}^S\left(3\delta\right))\leq \varepsilon.
		$$
	\end{enumerate}
	Then, we say that $(\mathbb{P},\|\cdot\|_{\mathbb{P}})$ is a perturbation space on $S$.
\end{definition}

\begin{theorem}\label{theorem:2}(\cite{Topalova-Zlateva}
	Let $(X,\|\cdot\|)$ be a Banach space, $S\subseteq X$ be nonempty and closed, $(\mathbb{P},\|\cdot\|_{\mathbb{P}})$ be a perturbation space on $S$, and $f:X\to \mathbb{R}\cup\{+\infty\}$ be a proper lower semi-continuous and bounded below on $S$ function.
	
	Then for every $\varepsilon>0$ there is $g\in \mathbb{P}$ so that $\|g\|_{\mathbb{P}}< \varepsilon$
	and $(f+g,S)$ is WPMC. In particular, $f+g$ attains its minimum on $S$.
\end{theorem}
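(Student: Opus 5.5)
We will prove Theorem~\ref{theorem:2} with the standard Ivanov--Zlateva iterative scheme. The idea is to build a Cauchy sequence of perturbations in $\mathbb{P}$ whose partial sums $f_n$ have nested sublevel sets with index of non-compactness tending to $0$, and then to conclude by \eqref{Kuratowski}.

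\emph{Construction.} Fix $\varepsilon>0$ and choose $\varepsilon_n>0$ with $\sum_n \varepsilon_n<\varepsilon$ and $\varepsilon_n\to 0$. Let $\delta_n=\delta(\varepsilon_n)$ be given by \ref{lab:d2-3}. We shrink the $\delta_n$ so that $\delta_{n+1}\le \delta_n/3$ and $\sum_{k>n}C\varepsilon_k$ is small compared with $\delta_n$; this is possible by passing to smaller $\varepsilon_{n+1}$ at each step, since $\delta(\cdot)$ may be taken decreasing. Put $f_0=f$, which is bounded below and proper on $S$. Inductively, given $f_{n-1}=f+g_1+\dots+g_{n-1}$, which is bounded below on $S$ because each $g_k$ is bounded on $S$, pick $x_n\in\Omega_{f_{n-1}}^S(\delta_n)$. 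Then \ref{lab:d2-3} gives $g_n\in\mathbb{P}$ with $\|g_n\|_{\mathbb{P}}\le\varepsilon_n$, $x_n\in\Omega_{g_n}^S(\delta_n)$ and $\alpha(\Omega_{g_n}^S(3\delta_n))\le\varepsilon_n$. Since $x_n$ lies in both sublevel sets, Proposition~\ref{proposition:1} yields
$$\Omega_{f_n}^S(\delta_n)\subset \Omega_{f_{n-1}}^S(3\delta_n)\cap\Omega_{g_n}^S(3\delta_n),$$
so in particular $\alpha(\Omega_{f_n}^S(\delta_n))\le\varepsilon_n$.

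\emph{Limit.} Because $\mathbb{P}$ is a complete convex cone by \ref{lab:d2-1}--\ref{lab:d2-2}, the series $g=\sum g_n$ converges in $\mathbb{P}$, $g\in\mathbb{P}$, and $\|g\|_{\mathbb{P}}<\varepsilon$. The tail $r_n=\sum_{k>n}g_k$ satisfies $\sup_S|r_n|\le C\sum_{k>n}\varepsilon_k=:\eta_n$. Hence for $t$ small, with $\eta_n$ suitable,
$$\Omega_{f+g}^S(t)\subset\Omega_{f_n}^S(t+2\eta_n)\subset\Omega_{f_n}^S(\delta_n),$$
provided $t+2\eta_n\le\delta_n$. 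It follows that $\lim_{t\downarrow0}\alpha(\Omega_{f+g}^S(t))\le\varepsilon_n$ for every $n$, so the limit is $0$.

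Finally, $f+g$ is lower semi-continuous, proper, and bounded below on the closed set $S$, since $g$ is continuous and bounded. By \eqref{Kuratowski}, $(f+g,S)$ is WPMC, and in particular $\Omega^S_{f+g}(0)\neq\emptyset$, so $f+g$ attains its minimum on $S$.

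\emph{Main obstacle.} The delicate point is the bookkeeping between $\delta_n$ and the tails $\eta_n$. We need $2\eta_n<\delta_n$, but $\delta_n$ depends on $\varepsilon_n$ through \ref{lab:d2-3}, which gives no explicit relation between them. We resolve this by choosing the sequence adaptively: after $\delta_n$ is fixed, choose all later $\varepsilon_k$ so small that $C\sum_{k>n}\varepsilon_k<\delta_n/4$. This is always possible, since the later terms can be made as small as desired (for instance $\varepsilon_{k+1}\le\min(\varepsilon_k,\delta_k)/(4C\,2^{k})$). Proposition~\ref{proposition:1} is applied only to $f_{n-1}$ and $g_n$ at each step, so no accumulation of the factor $3$ occurs.
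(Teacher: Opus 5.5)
Your proof is correct, but it takes a different route from the paper's. The paper only cites this result. The argument it actually runs on the same mechanism is the proof of Theorem~\ref{theorem:7}, which the abstract presents as strengthening the dense $G_\delta$ conclusion of Topalova--Zlateva, and it is a genericity argument. One sets $A_n=\{h\in\mathbb{P}:\ \exists t>0,\ \alpha(\Omega_{f+h}^S(t))<1/n\}$ and notes via \eqref{Kuratowski} that $(f+h,S)$ is WPMC exactly when $h\in\bigcap_n A_n$. Then each $A_n$ is shown to be large:
\begin{itemize}
\item one application of \ref{lab:d2-3} at a point of $\Omega_{f+h}^S(\delta)$, together with Proposition~\ref{proposition:1}, puts $h+g$ in $A_n$ (density);
\item the stability estimate $\Omega_{\phi+u}^S(\delta/3)\subset\Omega_\phi^S(\delta)$ for $\sup_S|u|<\delta/3$ puts a neighbourhood of $h+g$ in $A_n$ (openness, or porosity under the strong-sense hypothesis).
\end{itemize}
Baire's theorem then gives a dense $G_\delta$ of good perturbations, one of which lies in the $\varepsilon$-ball.

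You use the same two ingredients. Your inductive step is the density step, and your tail estimate $\Omega_{f+g}^S(t)\subset\Omega_{f_n}^S(t+2\eta_n)$ is the stability estimate. The difference is that you assemble them by summing an absolutely convergent series in the complete cone $\mathbb{P}$, rather than by intersecting open dense sets.

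Your version is constructive and avoids Baire's theorem. Applied to $f+h$, it also gives density of good perturbations in $\mathbb{P}$. What it does not give is that the good set is $G_\delta$, let alone that its complement is $\sigma$-porous. That residual structure is what the paper needs later, for instance when it intersects the good set with the exceptional set of Theorem~\ref{theorem:6} in Theorem~\ref{theorem:13}(b).

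A minor remark: the condition $\delta_{n+1}\le\delta_n/3$ is never used and can be dropped. If you keep it, the clean justification is a scaling argument: replacing the $g$ from \ref{lab:d2-3} by $(\delta'/\delta)g\in\mathbb{P}$ shows that any $\delta'<\delta$ is again admissible for the same $\varepsilon$.
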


We will modify the above theorem in the following way, involving porosity notions.

Let $(X,\rho)$ be a metric space. An open  (resp. closed) ball in $X$ with
center  $x$ and radius  $r>0$  is denoted by  $B(x,r)$  (resp. $\overline
B(x,r)$). If $(X,\|\cdot\|)$ is a normed space, then $B_X:=\overline{B}(0,1)$ (resp. $S_X:=\{x\in X: \|x\|=1\}$).

\begin{definition}\label{definition:3}(\cite{Phelps})
	A subset $A$ of a metric space $(X,\rho)$ is said to be porous in $X$, if there exist  $\alpha >0$ and $r_0 >0$ such that 
	for every $r\in (0,r_0)$ and $x \in  A$ there is $z\in  X$ such that $B(z,\alpha r)\subset B(x,r)\setminus A$.  
\end{definition}

\begin{definition}\label{definition:4}(\cite{Phelps})
	A set  $A$ is called  $\sigma$-porous in $(X\rho)$, if it is a countable union of sets, which are porous in $X$.
\end{definition}

Clearly, a set which is $\sigma $-porous in $ X$ is of first category, the converse being false in general. 

Given a normed space $E$, for $v\in E$ and $c>0$ define the cone
$$
C(v,c)=\{x\in E: x=\lambda v+w, \lambda>0, \| w\| <c\lambda \} = \bigcup _{\lambda>0}\lambda {B(v,c).}
$$

\begin{definition}\label{definition:5}(\cite{Ge})
	A subset $M$ of a normed space $(X,\|\cdot\|)$ is called cone porous (resp. fully cone porous) at $x\in M$, if there 
	exist $r>0, v\in X, \|v\| =1$ and $c\in (0,1)$ such that 
	$$
	M\cap B(x,r)\cap \big(x+ {C(v,c)}\big)=\emptyset,
	$$
	(resp. $M\cap  \big(x+ {C(v,c)}\big) = \emptyset$).
\end{definition}

\begin{definition}\label{definition:6}(\cite{Ge})
	The set $M$ is called cone porous (resp. fully cone porous), if it is cone porous (resp. fully cone porous) at every of its point. 
\end{definition}

\begin{definition}\label{definition:7}(\cite{Ge})
	$M$ is called $\sigma-$ cone porous (resp. $\sigma$-fully cone porous), if it is a union of countably many cone porous (resp. $\sigma$-fully cone porous) sets.
\end{definition}

It is clear that every fully cone porous set is cone porous and every cone porous set is porous. 

\subsection{Musielak-Orlicz Sequence Spaces}

Let
$\ell^0$ stand for the space of all real sequences, i.e.,
$x=\{x_i\}_{i=1}^\infty\in\ell^0$.

\begin{definition}\label{definition:8}(\cite{Cui-Hudzik})
	A Banach space $(X,\|\cdot\|)$ is said to be K\"othe sequence
	space if $X$ is a subspace of $\ell^0$ such that
	\begin{enumerate}[label=(d\ref{definition:8}.\roman*)]
		\item\label{lab:d5-1}if $x\in\ell^0$, $y\in X$ and $|x_i|\leq |y_i|$ for all $i\in\mathbb{N}$
		then $x\in X$ and $\|x\|\le\|y\|$
		\item\label{lab:d5-1}there exists an element $x\in X$ such that $x_i>0$ for all
		$i\in\mathbb{N}$.
	\end{enumerate}
\end{definition}

Following \cite{Lindenstrauss-Tzafriri}, a sequence $\{v_i\}_{i=1}^\infty$ in a Banach space $X$ is called
Schauder basis of $X$ (or basis for short) if for each $x\in X$
there exists an unique sequence $\{a_i\}_{i=1}^\infty$ of scalars
such that $x=\displaystyle\sum_{i=1}^\infty a_iv_i$ and we will use the notation ${\rm supp}\{x\}=\{i\in\mathbb{N}:a_i\not= 0\}$. If
$\{v_i\}_{i=1}^\infty$ is a basis in $X$ such that the series
$\displaystyle\sum_{i=1}^\infty a_iv_i$ converges whenever
$\displaystyle\sup_{n\in\mathbb{N}}\left\|\displaystyle\sum_{i=1}^n a_iv_i\right\|<\infty$,
then it is called a boundedly complete basis of $X$. A sequence of
non zero vectors $\{x^{(n)}\}_{n=1}^\infty$ of the form
$\displaystyle\sum_{i=p_n+1}^{p_{n+1}}a_iv_i$, with $\{a_i\}_{i=1}^\infty$
scalars and $0=p_1<p_2<p_3\dots$ an increasing sequence of
integers is called a block basic sequence or block basis of
$\{v_i\}_{i=1}^\infty$ for short. By $\{e_i\}_{i=1}^\infty$ we
denote the unit vectors of $\ell^0$.

Let us recall that an Orlicz function $M:\mathbb{R}\to\mathbb{R}$ is an even, continuous, convex function
such that $M$ is nondecreasing on $[0,+\infty)$ and $M(0)=0$. We say that $M$
is non--degenerate Orlicz function if $M(t)>0$ for every $t>0$. 

A sequence $\Phi =\{\Phi_{i}\}_{i=1}^\infty$ of Orlicz functions is
called a Musielak--Orlicz function (of two variables).

 We say that a Musielak-Orlicz function $\Phi$
is non--degenerate one if each $\Phi_n$ are non--degenerate Orlicz functions.

The Musielak--Orlicz sequence space $\ell_\Phi$, generated by a Musielak-Orlicz function $\Phi$
is the set of all real sequences $\{x_i\}_{i=1}^\infty$ such that
$\sum_{i=1}^\infty\Phi_i(\lambda x_i)<\infty$ for some $\lambda >0$.
The space $\ell_\Phi$ is a Banach space if endowed with
the Luxemburg's norm:
$$
\| x\|_{\Phi} =\inf\left\{
r>0:\sum_{i=1}^\infty\Phi_{i}(x_i/r)\leq 1\right\}.
$$

Throughout this note we always denote by $M$ an Orlicz function
and by $\Phi$ a Musielak-Orlicz function.

If the Musielak--Orlicz function $\Phi$ consists of one and the same Orlicz
function $M$ we get the Orlicz sequence space denoted by $\ell_M$.

A weight sequence $w=\{w_i\}_{i=1}^\infty$ is a sequence of
positive reals. We will distinguish two classes of weighted
sequences $\Lambda_\infty$ and $\Lambda$. The weight sequence
$w=\{w_i\}_{i=1}^\infty$ is from the class $\Lambda_\infty$ if it
is nondecreasing sequence with $\displaystyle\lim_{i\to\infty}w_i=\infty$.
Following \cite{Hernandez,Fuentes-Hernandez} we say that $w=\{w_i\}_{i=1}^\infty$ is from
the class $\Lambda$ if there exists a subsequence
$w=\{w_{i_k}\}_{k=1}^\infty$ such that
$\displaystyle\lim_{k\to\infty}w_{i_k}=0$ and $\displaystyle\sum_{k=1}^\infty
w_{i_k}=\infty$.

A weighted Orlicz sequence space $\ell_M(w)$ generated by an
Orlicz function $M$ and a weight sequence $w$ is the Musielak--Orlicz sequence
space $\ell_\Phi$, where $\Phi_i(t)=w_iM(t)$.

Weighted Orlicz sequence spaces were investigated for example in
\cite{Fuentes-Hernandez,Pierats-Ruiz,Kaminska98,Kaminska-Mastylo}. Let us
mention that if the weight sequence is from the class $\Lambda$,
then a lot of the properties of the space $\ell_M(w)$ depend only
on the generating Orlicz function $M$, which is in contrast with
the results when $w\not\in\Lambda$ \cite{Hernandez,Ruiz,Maleev-Zlatanov}.

It is well known that the Orlicz and the weighted Orlicz sequence
spaces equipped with Luxemburg norm are K\"othe
sequence spaces. 

For simplicity of notations we will use 
$$
\widetilde \Phi
(x)=\displaystyle\sum_{i=1}^\infty \Phi_i(x_i)
\ \ {\mbox and} \ \  \widetilde
M_w(x)=\displaystyle\sum_{i=1}^\infty w_iM(x_i)
$$
called {\it modular} of the space.
 
For every $a\in\ell_\infty$, $x\in \ell^{0}$ we will use the notation
$$
\widetilde{\Phi}_a(x)=\sum_{i=1}^{\infty}a_i\Phi_i(x_i)
$$
and will call it {\it weighted modular} (depending on $a$).

If $\{a_i\}_{i=1}^\infty$ is the constant sequence $a_i=1$ for all $i\in\mathbb{N}$, we get 
$\widetilde{\Phi}(x)=\widetilde{\Phi}_{\{1\}_{i=1}^\infty}(x)$.

An extensive study of Orlicz and Musielak--Orlicz spaces can be found in
\cite{Lindenstrauss-Tzafriri,Musielak}.

We denote by $h_\Phi$ (called {\it heart})  the closed linear subspace of $\ell_\Phi$,
generated by all $x \in \ell_\Phi$, such that $\widetilde\Phi (\lambda
x_i)<\infty$ for every $\lambda >0$ and by $h_M(w)$ the subspace of
$\ell_M(w)$ such that $\widetilde M_w(\lambda x)<\infty$ for every
$\lambda >0$.

The unit vectors $\{e_i\}_{i=1}^\infty$ form a boundedly complete
basis in $h_\Phi$, equipped with the Luxemburg norm.

We say that an Orlicz function $M$ has $\Delta_2$--condition if there exist $C>1$ and
$t_0>0$ such that $M(2t)\le CM(t)$ for every $t\in (0,t_0]$.

If $w\in\Lambda$ then the spaces $\ell_M(w)$ and $h_M(w)$ coincide
if and only if $M\in\Delta_2$. The proof is similar to that done in (\cite{Lindenstrauss-Tzafriri}
Proposition 4.a.4).

For a given Orlicz function $M$ the following numbers are associated
(see \cite{Lindenstrauss-Tzafriri}, p. 143):
$$
\alpha_M=\sup\{p:\sup\{ M(uv)/u^pM(v):u,v\in(0,1]\}>0\},
$$
$$
\beta_M=\inf\{p:\inf\{ M(uv)/u^pM(v):u,v\in(0,1]\}>0\}.
$$
An Orlicz function $M$ satisfies the $\Delta_2$--condition if and only if
$\beta_M<\infty$, which implies of course $M(uv)\geq u^qM(v)$,
$u,v\in [0,1]$ for some $q\geq \beta_M$ (see \cite{Lindenstrauss-Tzafriri} p.140).

\begin{definition}\label{definition:9}(\cite{Kaminska})
	We say that the Musielak--Orlicz function $\Phi$ satisfies the $\delta_2$--condition
	if there exist constants $K,\beta >0$ and a non--negative sequence
	$\{c_n\}_{n=1}^\infty\in\ell_1$ such that for every $n\in\mathbb{N}$
	\begin{equation}\label{equation:1} 
		\Phi_n(2t)\leq K\Phi_n(t)+c_n, 
	\end{equation} 
	provided $t\in [0,\Phi_n^{-1}(\beta)]$.
\end{definition}
The spaces $\ell_\Phi$ and $h_\Phi$ coincide if and only if $\Phi$ has
$\delta_2$--condition {\cite{Kaminska82}.

We say that the Musielak--Orlicz function $\Phi$ satisfies the uniform
$\delta_2$--condition if it satisfies (\ref{equation:1}) for every $t\in
[0,t_0]$ for some $t_0>0$ with $c_n=0$ for every $n\in\mathbb{N}$.

Recall that for given Musielak--Orlicz functions $\Phi$ and $\Psi$ the spaces
$\ell_\Phi$ and $\ell_\Psi$ coincide with equivalence of norms if and only if
$\Phi$ is equivalent to $\Psi$ at $0$, that is there exist constants
$K,\beta >0$ and a non--negative sequence $\{c_n\}_{n=1}^\infty\in\ell_1$,
such that for every $n\in\mathbb{N}$ the inequalities
$$ 
\Phi_n(Kt)\le \Psi_n(t)+c_n \ \ \mbox{and} \ \
\Psi_n(Kt)\le \Phi_n(t)+c_n 
$$
hold for every $t\in [0,\min(\Phi_n^{-1}(\beta),\Psi_n^{-1}(\beta))]$,
\cite{Kaminska,Maleev-Zlatanov-02}.

If $\Phi_i(1)=1$ for every $i\in\mathbb{N}$, then the unit vectors
$\{e_i\}_{i=1}^\infty$ is a normalized boundedly complete basis in
$h_\Phi$. If $\Phi_i(1)\not=1$, and $a_i$ is the solution of the
equation $\Phi_i(a_i)=1$, $i\in\mathbb{N}$, then $\ell_\Phi$, equipped with
the Luxemburg norm, is isometric to $\ell_\phi$, where
the Musielak--Orlicz function $\phi$ is defined by the sequence
?$\phi_i(t)=\displaystyle\frac{\Phi_i(a_it)}{\Phi_i(a_i)}$?. Therefore the sequence
$v_i=a_ie_i$, $i\in\mathbb{N}$ is a normalized boundedly complete basis in
$h_\Phi$. The unit vector basis $\{e_i\}_{i=1}^\infty$ is a
boundedly complete basis in $\ell_\phi$ if and only if $\ell_\phi\cong h_\phi$.
The weighted Orlicz sequence space $\ell_M(w)$ is isometric to
$\ell_\phi$, where the Musielak-Orlicz function $\phi$ is defined by
$$
\phi_i(t)=\displaystyle\frac{M(a_it)}{M(a_i)},\ \ \
a_i=M^{-1}(1/w_i).
$$ 
Hence $\ell_M(w)\cong h_M(w)$ if and only if
$\ell_\phi\cong h_\phi$ and $y=\sum_{i=1}^\infty x_ie_i\in h_\phi$
if and only if $x=\sum_{i=1}^\infty a_ix_ie_i\in h_M(w)$. The spaces
$\ell_M(w)$ and $\ell_\phi$ are isometric.

For any Orlicz function $M$ the function $M^{*}(t)=\sup_{s\geq 0}\{st-M(s)\}$ is called a conjugate 
function to $M$. It plays a crucial role in the investigation of the dual space $\ell_M^{*}$. 
For a Musielak-Orlicz function $\Phi=\{\Phi_n\}_{n=1}^\infty$ the conjugated function is
$\Phi^{*}=\{\Phi_n^{*}\}_{n=1}^\infty$. It is well known that if $\Phi\in\delta_2$, then $\ell_{\Phi}^{*}$ is isomorphic to $\ell_{\Phi^{*}}$. Moreover $\ell_{\Phi}$ is reflexive if $\Phi,\Phi^{*}\in\delta_2$ \cite{Kaminska-Lee}.

\subsection{Fr\'echet Differentiability in Musielak-Orlicz Sequence Spaces}

The existence of Fr\'echet smooth norms and bump
functions and its impact on the geometrical properties of a Banach
space have been subject to many investigations \cite{Deville-Godefroy-Zizler}. 
It is well known that the usual norm in $\ell_p$ and $L_p$ is $p-1$ times Fr\'echet differentiable for $p$ an odd number, $[p]$ times Fr\'echet differentiable for $p\not\in\mathbb{N}$, and infinitely many times Fr\'echet differentiable for $p$ an even number \cite{Bonoc-Frampton,Sundaresan}

An extensive study and bibliography may be found in \cite{Deville-Godefroy-Zizler}.

The exact order of differentiability in Orlicz spaces have been found in \cite{Maleev-Troyanski-91,Maleev-Troyanski-89,Maleev}.

We will investigate only Fr\'echet differentiable norms and bump functions, and for the sake 
of space we will refer sometimes it just as differentiable or smooth ones. 

As a Fr\'echet smooth norm immediately produces a bump function of
the same smoothness, all negative results about bump functions are
negative results about the smoothness in the class of all
equivalent norms.

Let $X$ be a Banach space and let $f:X\to\mathbb{R}$. The function $f$ is called Fr\'echet differentiable, or
a Fr\'echet smooth one at $x\in X$, if the following
representation holds:
$$
f(x+y)=f(x)+f^{\prime}(x)(y)+R(x,y)
$$
for every $y\in X$, where $f^{\prime}(x)$ is a linear bounded
form on $X$ and
$$
\lim_{\|y\|\to 0}\frac{|R(x,y)|}{\|y\|}=0.
$$
A norm $\|\cdot\|$ is refered as a Fr\'echet differentiable if it is 
smooth at all points $x\in S_X$, the unit sphere of $X$.

Sometimes it is interesting and applicable to know the best possible 
order of the reminder $R(x,y)$ and thus we get the next notion.
We shall say that $f$ is 
$F_{\omega}$--smooth at $x\in X$ for some $\omega :(0,1]\to\mathbb{R}^{+}$, with $\lim_{t\to
	0}\frac{\omega (t)}{t^p}=0$, $p\in [1,2]$, if
$$
\lim_{\|y\|\to 0}\frac{|R(x,y)|}{\omega(\|y\|)}=0.
$$

Let $\Phi =\{\Phi_n\}_{n=1}^\infty$ be a Musielak--Orlicz function. Let us define
by 
$$
\psi_n(t)=\displaystyle\int_0^t\frac{\Phi_n (u)}{u}du
$$ 
and
$$
\Psi_n(t)=\displaystyle\int_0^t\displaystyle\frac{\psi_n(u)}{u}\exp \frac{u}{u-t}du.
$$

Following \cite{Maleev-Zlatanov-02}, to every Musielak--Orlicz function $\Phi =\{\phi_n\}_{n=1}^\infty$
we associate the numbers 
$$
\alpha_\Phi =\liminf_{n\to\infty}\alpha_{\Phi_n}\ \mbox{and}\
\beta_\Phi =\limsup_{n\to\infty}\beta_{\Phi_n}.
$$

\begin{lemma}\label{lemma:3}(\cite{Maleev-Zlatanov-02})
	Let $\Phi =\{\Phi_n\}_{n=1}^\infty$ be a Musielak--Orlicz function.
	Then 
	\begin{enumerate}
		\item  $\Phi_n$ is equivalent to $\Psi_n$ at $0$
		\item $t^k|\Psi_n^{(k)}(t)|\le c_k\Psi_n(c_kt)$, $t\in
		[0,\infty )$, $c_k>0$, $k=1,2,\dots$
		\item $\ell_\Psi$ is isomorphic to $\ell_\Phi$
		\item $\alpha_\Psi=\alpha_\Phi$.
	\end{enumerate}
	for every $n$.
\end{lemma}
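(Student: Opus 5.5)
Lemma \ref{lemma:3} is a result quoted from \cite{Maleev-Zlatanov-02}, so the plan is to reconstruct its proof from the two integral transforms: first compare $\psi_n$ with $\Phi_n$, then compare $\Psi_n$ with $\psi_n$. All constants must be chosen independently of $n$, so that the equivalence at $0$ holds with $c_n=0$.

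\textbf{Step 1: $\psi_n$ versus $\Phi_n$.} Since $\Phi_n(u)/u$ is nondecreasing by convexity, $\psi_n(t)\le \Phi_n(t)$. Also
$$
\psi_n(2t)\ge \int_t^{2t}\frac{\Phi_n(u)}{u}\,du\ge \Phi_n(t)\ln 2\ge \tfrac12\Phi_n(t).
$$
Hence $\tfrac12\Phi_n(t)\le\psi_n(2t)\le\Phi_n(2t)$, with constants independent of $n$. Moreover $\psi_n$ is convex, because $\psi_n'(t)=\Phi_n(t)/t$ is nondecreasing.

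\textbf{Step 2: $\Psi_n$ versus $\psi_n$.} Note that $\exp(u/(u-t))=e^{-u/(t-u)}$ lies in $(0,1]$ and vanishes to infinite order as $u\uparrow t$. Since $\psi_n(u)/u$ is nondecreasing, $\Psi_n(t)\le \frac{\psi_n(t)}{t}\int_0^t e^{-u/(t-u)}\,du=c\,\psi_n(t)$. In the other direction, restricting the integral to $u\in[t/4,t/2]$, where the exponential is at least $e^{-1}$, gives $\Psi_n(t)\ge c'\psi_n(t/4)$. Combined with Step 1 this yields item 1 uniformly in $n$. Item 3 then follows from the quoted criterion that $\ell_\Phi=\ell_\Psi$ with equivalent norms if and only if $\Phi$ is equivalent to $\Psi$ at $0$. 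Item 4 follows because the indices $\alpha_M$ and $\beta_M$ are invariant under equivalence at $0$ with uniform constants: a two-sided comparison $\Phi_n(t/K)\lesssim\Psi_n(t)\lesssim\Phi_n(Kt)$ changes the ratio $M(uv)/(u^pM(v))$ only by bounded factors. This invariance must be checked carefully for the liminf over $n$.

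\textbf{Step 3: derivative estimates (item 2).} This is the main obstacle. Substitute $u=ts$ to write
$$
\Psi_n(t)=\int_0^1 \frac{\psi_n(ts)}{s}\,e^{-s/(1-s)}\,ds.
$$
I would try to differentiate this expression $k$ times in $t$, using the identity $t\,\frac{d}{dt}\psi_n(ts)=s\,\frac{d}{ds}\psi_n(ts)$. Integrating by parts in $s$ then moves all derivatives onto the smooth kernel $s^{-1}e^{-s/(1-s)}$. The boundary terms vanish at $s=1$ because the kernel is flat there, and they are controlled at $s=0$ because $\psi_n(ts)/s\to 0$. This gives
$$
t^k\Psi_n^{(k)}(t)=\int_0^1 \psi_n(ts)\,K_k(s)\,ds,
$$
with a kernel $K_k$ independent of $n$ and integrable near $1$. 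Near $0$ the kernel behaves like $s^{-1-j}$, which is dangerous, so I would alternatively iterate the operator $t\frac{d}{dt}$ directly. Using $t\psi_n'(t)=\Phi_n(t)$ and $\Phi_n(ts)\le\psi_n(2ts)$, one gets
$$
\Bigl|\bigl(t\tfrac{d}{dt}\bigr)^k\Psi_n(t)\Bigr|\le C_k\,\psi_n(C_kt)\le C_k\,\Psi_n(C_k't),
$$
where the last inequality uses Step 2. Finally, $t^k\Psi^{(k)}$ is a linear combination of $(t\frac{d}{dt})^j\Psi$ for $j\le k$, which gives item 2 with $c_k$ independent of $n$.
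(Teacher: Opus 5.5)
The paper gives no proof of this lemma; it only cites \cite{Maleev-Zlatanov-02}, so your reconstruction has to stand on its own.

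Steps 1 and 2 are correct. They give $\Psi_n\le\Phi_n$ and $\Phi_n(t/(16e))\le\Psi_n(t)$ for all $t\ge0$, hence items 1 and 3 with $c_n=0$ and constants independent of $n$. Note also that $\Psi_n$ is convex, being $\int_0^1\psi_n(ts)\,s^{-1}e^{-s/(1-s)}\,ds$, so $\ell_\Psi$ is a genuine Musielak--Orlicz space. In item 4 the $\liminf$ over $n$ causes no trouble, since you get $\alpha_{\Psi_n}=\alpha_{\Phi_n}$ termwise. The point that does need care is different. After rescaling $u$ by the constant $16e$ you may leave $(0,1]$. For $u$ bounded away from $0$, bound $\Psi_n(uv)/(u^p\Psi_n(v))\le u^{-p}$ directly, rather than passing to $\Phi_n$, which would require a $\Delta_2$-type estimate.

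The genuine gap is item 2, the heart of the lemma. Your fallback iterates $t\frac{d}{dt}$ under the integral using $t\psi_n'=\Phi_n$. After the first step the integrand contains $\Phi_n(ts)$, so the second application needs $\Phi_n'$ and the $k$-th needs $\Phi_n^{(k-1)}$. A general convex $\Phi_n$ has only a monotone, possibly discontinuous, derivative. So this handles at most $k\le2$ (and $k=2$ needs $u\Phi_n'(u)\le\Phi_n(2u)$, which you do not state). It does not even show $\Psi_n\in C^\infty(0,\infty)$, which is the whole purpose of the kernel.

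The route you abandoned is the right one. The feared singularity at $s=0$ disappears if you alternate one $t$-derivative with one integration by parts, instead of differentiating $K_0(s)=s^{-1}e^{-s/(1-s)}$ by itself. Set $\theta=t\frac{d}{dt}$ and $K_{j+1}(s)=-\frac{d}{ds}\bigl(sK_j(s)\bigr)$. Use the identity $t\,\partial_t\psi_n(ts)=s\,\partial_s\psi_n(ts)$ and integrate by parts, which needs only $\psi_n\in C^1[0,\infty)$. The boundary terms vanish because $\psi_n(0)=0$ and $K_j$ is flat at $s=1$. This gives
\[
\theta^k\Psi_n(t)=\int_0^1\psi_n(ts)\,K_k(s)\,ds,\qquad K_1(s)=\frac{e^{-s/(1-s)}}{(1-s)^2}.
\]
Every $K_k$ with $k\ge1$ is smooth on $[0,1)$ and flat at $1$, hence bounded on $[0,1]$. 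Therefore $|\theta^k\Psi_n(t)|\le\|K_k\|_\infty\psi_n(t)\le e\|K_k\|_\infty\Psi_n(4t)$ by your Step 2. Expanding $t^k\frac{d^k}{dt^k}=\theta(\theta-1)\cdots(\theta-k+1)$ then yields item 2 with $c_k$ independent of $n$.
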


For $p\geq 1$ define $E(p)=\left\{
\begin{array}{ll}
	p-1,&p\in\mathbb{N} ,\\[12pt]
	[p],&p\not\in\mathbb{N} .
\end{array}
\right.$ 

\begin{theorem}\label{theorem:4}(\cite{Maleev-Zlatanov-02})
	Let $\Psi$ be a Musielak-Orlicz function and $1\le k=E(\alpha_\Psi )$.
	Then there exists an equivalent $k$--times Fr\'echet
	differentiable norm in $h_\Psi$. If in addition $\beta_\Phi<+\infty$ then there exists
	an equivalent $k$-times uniformly Fr\'{e}chet differentiable norm in $\ell_\Phi$.
\end{theorem}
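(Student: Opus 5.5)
The plan is to renorm through the modular. By Lemma \ref{lemma:3}, $\Psi$ may be replaced by a Musielak--Orlicz function whose members are $C^\infty$ on $(0,\infty)$ and satisfy
$$
t^j|\Psi_n^{(j)}(t)|\le c_j\Psi_n(c_jt),\qquad j=1,2,\dots,
$$
with $c_j$ independent of $n$. This replacement changes neither the space (up to isomorphism) nor $\alpha_\Psi$. We then take the Luxemburg norm $N(x)=\|x\|_\Psi$, defined implicitly by
$$
F(x,r)=\widetilde\Psi(x/r)=\sum_{n=1}^\infty\Psi_n(x_n/r)=1,\qquad r>0,
$$
and aim to show that $N$ is $k$ times Fr\'echet differentiable on $h_\Psi\setminus\{0\}$, where $k=E(\alpha_\Psi)$.

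\textbf{Step 1: the modular is $C^k$ on $h_\Psi$.} The candidate $j$-th derivative is
$$
D^j\widetilde\Psi(x)(h_1,\dots,h_j)=\sum_n\Psi_n^{(j)}(x_n)\,h_{1,n}\cdots h_{j,n}.
$$
Choose $p$ with $k<p<\alpha_\Psi$; this is possible precisely because $k=E(\alpha_\Psi)$. From the definition of $\alpha_\Psi$ as a $\liminf$, for all $n\ge n_0$ and $u,v\in(0,1]$ we have
$$
\Psi_n(uv)\le Cu^p\Psi_n(v).
$$
For the boundedness of $D^j\widetilde\Psi$, $j\le k$, we combine the derivative estimates with this power-type bound. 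On coordinates where $|h_n|\lesssim|x_n|$ we get terms controlled by $\Psi_n(cx_n)$. On coordinates where $|x_n|\lesssim|h_n|$ we use $|\Psi^{(j)}(s)|\le c\,s^{-j}\Psi(cs)$ together with $p>j$, which gives terms controlled by $\Psi_n(c h_n)$. Summing, these bounds are finite in $h_\Psi$. The finitely many coordinates $n<n_0$ are smooth functions of finitely many variables and can be ignored.

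\textbf{Step 2: Taylor remainders and continuity of $D^k$.} Uniform Fr\'echet estimates for the Taylor remainders follow by the same splitting. The extra factor $u^{p-k}$ with $p-k>0$ gives the required $o(\|h\|^k)$ and the continuity of $D^k$. The tails are handled by order continuity of $h_\Psi$: $\sum_{n>m}\Psi_n(\lambda x_n)\to0$ for every $\lambda>0$, so the infinitely many coordinates contribute uniformly small error. I expect the main obstacle to be this continuity of the top derivative $D^k\widetilde\Psi$. This is exactly where $k<p<\alpha_\Psi$ (that is, the definition of $E$) is needed, and where one must argue carefully with the $\liminf$ in $\alpha_\Psi$ and with the tails.

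\textbf{Step 3: implicit function theorem.} Convexity and $\Psi_n(0)=0$ give $t\Psi_n'(t)\ge\Psi_n(t)$. Hence at a solution of $F(x,r)=1$,
$$
\partial_rF(x,r)=-\frac1r\sum_n\Psi_n'(x_n/r)\,\frac{x_n}{r}\le-\frac1r<0.
$$
So the implicit function theorem applies on $h_\Psi\times(0,\infty)$, and $r=N(x)$ is $C^k$ on $h_\Psi\setminus\{0\}$. This yields an equivalent $k$ times Fr\'echet differentiable norm.

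\textbf{Step 4: the uniform statement in $\ell_\Phi$.} If in addition $\beta_\Phi<\infty$, the $\delta_2$-type estimates give $\ell_\Phi=h_\Phi$. They also give a two-sided power comparison $\Psi_n(uv)\ge cu^q\Psi_n(v)$, which makes the bounds in Steps 1--2 uniform on the unit sphere. Together with $\partial_rF\le -1/r$ on the unit sphere, the implicit derivatives of $N$ are uniformly continuous there, giving uniform Fr\'echet differentiability of order $k$.
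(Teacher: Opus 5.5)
The paper gives no proof of this statement; it is quoted from \cite{Maleev-Zlatanov-02}, so your outline has to stand on its own. Your architecture is the standard Maleev--Troyanski route: smoothing via Lemma~\ref{lemma:3}, termwise derivatives of the modular, and the implicit function theorem for the Luxemburg norm with $\partial_rF\le -1/r$. Step~3 is fine.

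\textbf{The gap.} It is in this sentence of Step~1: ``from the definition of $\alpha_\Psi$ as a $\liminf$, for all $n\ge n_0$ and $u,v\in(0,1]$ we have $\Psi_n(uv)\le Cu^p\Psi_n(v)$.'' The $\liminf$ only gives $p<\alpha_{\Psi_n}$ for $n\ge n_0$. Hence for each such $n$ you get a constant $C_n=C_n(p)$ with $\Psi_n(uv)\le C_nu^p\Psi_n(v)$, but nothing bounds $\sup_n C_n$.

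Every estimate you sum over $n$ in Steps~1--2 needs that uniform constant. This covers the boundedness of $D^j\widetilde\Psi$, the $o(\|h\|^k)$ remainders, and the continuity of $D^k$. Step~4 needs it too: $\limsup_n\beta_{\Phi_n}<\infty$ does not control the $\Delta_2$ constants of the individual $\Phi_n$. So it yields neither the $\delta_2$-condition nor a uniform two-sided power comparison.

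\textbf{Why it cannot be patched.} With the paper's definitions $\alpha_\Psi=\liminf_n\alpha_{\Psi_n}$ and $\beta_\Psi=\limsup_n\beta_{\Psi_n}$, the statement itself fails. Here the single-function indices are taken in the Lindenstrauss--Tzafriri sense.

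Take $\epsilon_n=2^{-n-2}$ and set $\Psi_n(t)=|t|^3/\epsilon_n^2$ for $|t|\le\epsilon_n$, and $\Psi_n(t)=3|t|-2\epsilon_n$ for $|t|\ge\epsilon_n$.
\begin{enumerate}
\item Each $\Psi_n$ is a $C^1$ non-degenerate Orlicz function with $\alpha_{\Psi_n}=\beta_{\Psi_n}=3$.
\item For $1<p<3$, the best constant in $\Psi_n(uv)\le C_nu^p\Psi_n(v)$ is of order $\epsilon_n^{1-p}\to\infty$. The choice $u=\epsilon_n$, $v=1$ already shows this.
\item From $3|t|-2\epsilon_n\le\Psi_n(t)\le 3|t|$ and $\sum_n\epsilon_n=1/4$ you get $2\|a\|_1\le\|a\|_\Psi\le 3\|a\|_1$.
\end{enumerate}
Hence $h_\Psi=\ell_\Psi\cong\ell_1$. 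This space admits no equivalent Fr\'echet differentiable norm at all, since it is not Asplund. Yet $E(\alpha_\Psi)=2$ and $\beta_\Psi=3<\infty$, so both parts of the statement would apply.

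\textbf{What is needed.} The result, and your proof, require a uniform index hypothesis. For the first part: some $p>k$ and $C$ with $\Psi_n(uv)\le Cu^p\Psi_n(v)$ for all $n\ge n_0$, all $u\in(0,1]$, and all $v$ in the range relevant to the unit ball (for instance after normalizing $\Psi_n(1)=1$). For the second part you need the analogous uniform $\delta_2$/upper-index bound. Under those hypotheses your scheme is the right one, though Step~2 still carries most of the actual work.
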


Following (\cite{Deville-Godefroy-Zizler}, p. 58), a direct consequence of Theorem \ref{theorem:4} is the next corollary. 

\begin{cor}\label{corollary:5}
	Let $\Psi$ be a Musielak-Orlicz function and $1<\alpha_\Psi$,
	then $\ell_\Psi$ is an Asplund space.
\end{cor}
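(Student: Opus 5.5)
The corollary should follow from Theorem~\ref{theorem:4} together with the standard link between smoothness and the Asplund property (\cite{Deville-Godefroy-Zizler}, p.~58): a Banach space that admits an equivalent Fr\'echet differentiable norm, or even a Fr\'echet differentiable bump function, is Asplund. The work is to produce such a norm on $\ell_\Psi$ from the hypothesis $\alpha_\Psi>1$.

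\emph{Step 1.} Since $\alpha_\Psi>1$, we have $k=E(\alpha_\Psi)\ge 1$. If $\alpha_\Psi\notin\mathbb{N}$ then $k=[\alpha_\Psi]\ge 1$; if $\alpha_\Psi\in\mathbb{N}$ then $\alpha_\Psi\ge 2$ and $k=\alpha_\Psi-1\ge 1$. Theorem~\ref{theorem:4} therefore gives an equivalent Fr\'echet differentiable norm on $h_\Psi$, so $h_\Psi$ is Asplund.

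\emph{Step 2.} We pass from $h_\Psi$ to $\ell_\Psi$. The cleanest route is to show $\ell_\Psi=h_\Psi$, that is, $\Psi\in\delta_2$, since then the norm from Theorem~\ref{theorem:4} already lives on $\ell_\Psi$. The tool is Lemma~\ref{lemma:3}. Its part 2 with $k=1$ gives $t\Psi_n'(t)\le c_1\Psi_n(c_1t)$, a regularity estimate that is uniform in $n$, and $\ell_\Psi$ is isomorphic to the space built from the regularized functions (part 3). From this estimate we would try to derive $\beta<\infty$ uniformly in $n$, which is the $\delta_2$-condition and gives $\ell_\Psi\cong h_\Psi$.

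\emph{Step 3 (main obstacle).} Step~2 is where I expect the argument to be incomplete. The hypothesis $\alpha_\Psi>1$ is a lower index condition and, taken alone, does not control $\beta_\Psi$. Lemma~\ref{lemma:3}(2) also does not obviously yield $\Delta_2$: with the constant $c_1$ inside the argument, the inequality holds for any Orlicz function. If $\delta_2$ cannot be extracted, I would fall back on duality instead.
\begin{itemize}
\item Here $\ell_\Psi=(h_{\Psi^*})^*$, and $\alpha_\Psi>1$ gives $\beta_{\Psi^*}<\infty$, so $\Psi^*\in\delta_2$.
\item We would then show that $\ell_\Psi$ is separable, or that $h_{\Psi^*}$ has separable dual, so that the Fr\'echet smoothness of the norm on $h_\Psi$ transfers.
\end{itemize}
Failing that, I would read the corollary as implicitly assuming $\beta_\Psi<+\infty$, which matches the second half of Theorem~\ref{theorem:4}. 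In that case Theorem~\ref{theorem:4} gives a uniformly Fr\'echet differentiable equivalent norm on $\ell_\Psi$ directly, and the Asplund conclusion follows at once from \cite{Deville-Godefroy-Zizler}.
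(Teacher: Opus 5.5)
\textbf{The statement is false as written, so no proof can work.} Your Step~1 is exactly the paper's entire argument: Corollary~\ref{corollary:5} is presented as a direct consequence of Theorem~\ref{theorem:4} via \cite{Deville-Godefroy-Zizler}, p.~58. The paper says nothing about passing from $h_\Psi$ to $\ell_\Psi$. You are right that this passage is the real issue and that $\alpha_\Psi>1$ cannot supply it.

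For a counterexample, take the Nakano function $\Psi_n(t)=|t|^{p_n}$ with $p_n=3/2$ for even $n$ and $p_n=n+1$ for odd $n$. Then $\alpha_\Psi=\liminf_n p_n=3/2>1$. For every $c\in\ell_\infty$,
\[
\|c\|_\infty\le\Big\|\sum_{k=1}^\infty c_k e_{2k-1}\Big\|_\Psi\le 2\|c\|_\infty .
\]
The left inequality follows from the K\"othe property and $\|e_j\|_\Psi=1$. The right one holds because $\|c\|_\infty\le 1$ gives $\sum_k\Psi_{2k-1}(c_k/2)\le\sum_k 2^{-2k}<1$. So $\ell_\Psi$ contains a copy of $\ell_\infty$, hence of $\ell_1$ (separable with non-separable dual), and is not Asplund. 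Meanwhile $h_\Psi\cong\ell_{3/2}\oplus c_0$ is Asplund.

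The same happens whenever $\ell_\Psi\neq h_\Psi$. Take $x\in\ell_\Psi\setminus h_\Psi$ and $\lambda>0$ with $\widetilde\Psi(x/\lambda)=\infty$. The disjoint-block construction of Lemmas~\ref{lemma:8A}--\ref{lemma:9} then produces blocks $u_n$ of $x$ with $\sum_n\widetilde\Psi(u_n)\le 1$ and $\|u_n\|_\Psi\ge\lambda$. The map $c\mapsto\sum_n c_nu_n$ embeds $\ell_\infty$ isomorphically into $\ell_\Psi$.

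\textbf{Consequences for your fallbacks.} The duality fallback cannot succeed. Separability of $\ell_\Psi$, which you identify with the dual of $h_{\Psi^*}$, holds exactly when $\ell_\Psi=h_\Psi$. The Asplund property of $h_\Psi$ does not lift to $\ell_\Psi$; the example above is the $c_0$ versus $\ell_\infty$ phenomenon.

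What your Step~1 proves, and all the paper's citation yields, is that $h_\Psi$ is Asplund when $\alpha_\Psi>1$. Your last fallback is the correct repair. Under the extra hypothesis $h_\Psi\cong\ell_\Psi$ (i.e.\ $\Psi\in\delta_2$), Step~1 finishes the proof at once. Under $\beta_\Psi<+\infty$, the second half of Theorem~\ref{theorem:4} gives the smooth norm on $\ell_\Psi$ directly. By the embedding argument above, the condition $h_\Psi\cong\ell_\Psi$ is also necessary for $\ell_\Psi$ to be Asplund.

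The same example, with $\alpha_\Psi=3/2\in(1,2)$, also contradicts part~1 of Theorem~\ref{theorem:17} and part~(a) of Corollary~\ref{corollary:18}. Both rely on the same unjustified step and need the same additional hypothesis.
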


\section{Auxiliary Results}

To ease the proofs of the main results, we will present some intermediate ones in this section. Some of these results have their own value and can be applied in subsequent research, and this is another reason to formulate them separately instead of appearing as steps in the proofs of the main theorems.

\subsection{Auxiliary Results About $\sigma$-Porous Set}

We will need the next definition which is a generalization of Definition \ref{definition:2} introduced in \cite{Topalova-Zlateva}.

\begin{definition}\label{definition:11}
	We will say that the space $(\mathbb{P},\|\cdot\|_{\mathbb{P}})$ of real continuous, and bounded on $S$ functions, is a {\it perturbation space in a strong sense} on $S$ if
	the conditions \ref{lab:d2-1}, \ref{lab:d2-2}, and 
		
	\begin{enumerate}[label=(dS.ii\roman*)]
		\item\label{lab:d8-3} for any $n\in\mathbb{N}$ there exists $K_n\in (0,6C)$ so that for any $\varepsilon>0$,  for any $x\in S$ there exists $g\in \mathbb{P}$ (de\-pend\-ing on $x$ and $\varepsilon$), such that for $\delta=K_n\varepsilon$, 
		\begin{equation}\label{equation:4}
			\|g\|_{\mathbb{P}}\leq \varepsilon,\ \ 
			x\in \Omega_{g}^S\left(\delta\right),\ \mbox{and}\ \ 
			\alpha(\Omega_{g}^S\left(3\delta\right))\leq {1\over n}
		\end{equation}
	\end{enumerate}
	are satisfied.
\end{definition}

\begin{theorem}\label{theorem:7} 
	Let $(X,\|\cdot\|)$ be a Banach space, $S\subseteq X$ be nonempty and closed, $(\mathbb{P},\|\cdot\|_{\mathbb{P}})$ be a perturbation space on $S$ in the strong sense.	
	Let $f : X \rightarrow \mathbb{R} \cup \{+\infty\}$ be a proper, lower semi-continuous and bounded below on $S$ function. Then the set 
	$$
	A=\Big\{h\in \mathbb{P}: \mbox{the minimization problem}\ \  (f+h,S) \mbox { is not WPMC}\Big\}
	$$
	is a $\sigma$-porous set.
\end{theorem}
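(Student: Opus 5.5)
We decompose the exceptional set $A$ using the Kuratowski characterization (\ref{Kuratowski}). Since $\mathbb{P}$ consists of continuous functions that are bounded on $S$, for every $h\in\mathbb{P}$ the function $f+h$ is proper, lower semi-continuous and bounded below on $S$. Hence $(f+h,S)$ is WPMC if and only if $\lim_{t\downarrow 0}\alpha(\Omega_{f+h}^S(t))=0$. For $n\in\mathbb{N}$ put
$$
A_n=\Big\{h\in\mathbb{P}:\ \alpha\big(\Omega_{f+h}^S(t)\big)>\tfrac{2}{n}\ \mbox{for all}\ t>0\Big\}.
$$
Then $A=\bigcup_{n}A_n$, and it suffices to show that each $A_n$ is porous in $(\mathbb{P},\|\cdot\|_{\mathbb{P}})$. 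The constants $\alpha$ and $r_0$ in Definition \ref{definition:3} are allowed to depend on $n$ through $K_n$.

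Fix $n$, $h\in A_n$ and $r\in(0,1)$. We set $\varepsilon=r/2$ and pick a point $x\in\Omega^S_{f+h}(\delta)$ with $\delta=K_n\varepsilon$. Condition \ref{lab:d8-3} then gives $g\in\mathbb{P}$ with $\|g\|_{\mathbb{P}}\le\varepsilon$, $x\in\Omega_g^S(\delta)$ and $\alpha(\Omega_g^S(3\delta))\le 1/n$. Set $z=h+g\in\mathbb{P}$, which is possible because $\mathbb{P}$ is a cone. Then $\|z-h\|_{\mathbb{P}}\le r/2$.

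Now let $u\in\mathbb{P}$ with $\|u-z\|_{\mathbb{P}}<\gamma r$, where $\gamma$ is a small constant to be chosen in terms of $K_n$ and $C$. Condition \ref{lab:d2-2} gives $\sup_S|u-z|<C\gamma r$. By Proposition \ref{proposition:1} applied to $f+h$ and $g$ with $\delta$, using $x\in\Omega^S_{f+h}(\delta)\cap\Omega^S_g(\delta)$, we get $\Omega^S_{f+z}(\delta)\subset\Omega_g^S(3\delta)$. Because $u$ and $z$ differ uniformly by at most $C\gamma r$, we have
$$
\Omega^S_{f+u}(t)\subset\Omega^S_{f+z}(t+2C\gamma r).
$$
Choose $\gamma$ with $2C\gamma r<\delta/2=K_nr/4$, that is $\gamma<K_n/(8C)$. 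For $t<\delta/2$ this gives $\alpha(\Omega^S_{f+u}(t))\le 1/n<2/n$, so $u\notin A_n$.

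Combining these facts, $B(z,\gamma r)\subset B(h,r)\setminus A_n$, since $\|z-h\|+\gamma r\le r/2+\gamma r<r$. This proves that $A_n$ is porous with constant $\gamma=K_n/(8C)$ and $r_0=1$. The bound $K_n<6C$ in the definition keeps $\gamma<1$.

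The main difficulty lies in this last inclusion chain, that is, in checking that Proposition \ref{proposition:1} really applies. Both functions must be bounded below on $S$, and $g$ is bounded on $S$, so this holds. It also requires that the level-set parameters $\delta$ versus $3\delta$ line up with the uniform shift $2C\gamma r$. If these constants do not match exactly, the remedy is to rescale $\varepsilon$ (for example take $\varepsilon=r/4$) and shrink $\gamma$ correspondingly.

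A technical point is that $\mathbb{P}$ is a cone rather than a linear space. Porosity is therefore measured in the metric space $\mathbb{P}$, and the ball $B(z,\gamma r)$ should be understood intersected with $\mathbb{P}$. The argument above only ever tests points $u\in\mathbb{P}$, so it is unaffected.
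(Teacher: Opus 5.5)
Your proof is correct and follows the paper's argument essentially step by step: the decomposition via the Kuratowski criterion, a point $x\in\Omega^S_{f+h}(\delta)$, the perturbation $g$ from (dS.iii), Proposition~\ref{proposition:1}, and stability of sublevel sets under uniformly small perturbations; centering the hole at $h+g$ and using the strict threshold $2/n$ even avoid the paper's tacit assumption $\operatorname{int}\mathbb{P}\neq\emptyset$ and its $\le 1/n$ versus $<1/n$ mismatch. One small slip is that $r/2+\gamma r<r$ requires $\gamma<1/2$, which $K_n<6C$ does not give (it only gives $\gamma<3/4$), so take $\gamma=\min\{K_n/(8C),1/4\}$ instead.
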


{
\begin{proof}
For each $n\in\mathbb{N}$ define

\[
A_n := \Big\{ h\in\mathbb{P} : \exists\, t>0 \text{ such that }
\alpha\big(\Omega_{f+h}^S(t)\big) < \tfrac1n \Big\}.
\]
By the lemma of Kuratowski mentioned above, see (\ref{Kuratowski}), it follows

\[
(f+h,S)\ \text{is WPMC}
\quad\Longleftrightarrow\quad
\forall n\ \exists t>0:\ \alpha(\Omega_{f+h}^S(t))<\tfrac1n,
\]
hence

\[
A:=\Big\{h:(f+h,S)\text{ is WPMC}\Big\}=\bigcap_{n=1}^\infty A_n.
\]
Therefore
\[
P_0:=\{h:(f+h,S)\text{ is not WPMC}\}
=\bigcup_{n=1}^\infty (\mathbb{P}\setminus A_n).
\]
Thus it suffices to show that $\mathbb{P}\setminus A_n$ is porous for each $n$.

\medskip
\noindent\textbf{Step 1. Choosing $x$ and $g$.}
Fix $n\in\mathbb{N}$, $h\in\mathbb{P}$, and $\varepsilon>0$.  
Let $K_n\in(0,6C)$ be the constant from condition \ref{lab:d8-3}, and set

\[
\delta := K_n\varepsilon.
\]
Choose any point
\[
x\in \Omega_{f+h}^S(\delta).
\]
By condition \ref{lab:d8-3}, applied to this same $x$, there exists $g\in\mathbb{P}$ with
\[
\|g\|_{\mathbb{P}}\le \varepsilon,
\qquad
x\in\Omega_g^S(\delta),
\qquad
\alpha(\Omega_g^S(3\delta))\le \tfrac1n.
\]
Thus

\[
x\in \Omega_{f+h}^S(\delta)\cap \Omega_g^S(\delta),
\]
so Proposition~\ref{proposition:1} applies to the pair $(f+h,g)$.

\medskip
\noindent\textbf{Step 2. Stability claim.}
\begin{claim}
Let $\phi:S\to\mathbb{R}$ be proper and bounded below, and let $u:S\to\mathbb{R}$ satisfy $\|u\|_\infty<\delta/3$. Then

\[
\Omega_{\phi+u}^S(\delta/3)\subset \Omega_\phi^S(\delta).
\]

\end{claim}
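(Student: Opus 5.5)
The claim is an elementary comparison of infima, and I will prove it directly from the definition of $\Omega$. Write $m_\phi:=\inf_S\phi$ and $m_{\phi+u}:=\inf_S(\phi+u)$. Both are finite: $\phi$ is proper and bounded below on $S$, and $u$ is bounded.

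The first step is to compare the two infima. Since $|u(y)|\le\|u\|_\infty<\delta/3$ for every $y\in S$, we have
$$
\phi(y)-\delta/3<\phi(y)+u(y)<\phi(y)+\delta/3 .
$$
Taking infima over $S$ gives $m_\phi-\delta/3\le m_{\phi+u}\le m_\phi+\delta/3$. Only the upper bound $m_{\phi+u}\le m_\phi+\delta/3$ is needed.

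The second step is the chain of inequalities. Take $x\in\Omega_{\phi+u}^S(\delta/3)$, so that $x\in S$ and $\phi(x)+u(x)\le m_{\phi+u}+\delta/3$. Then
$$
\phi(x)\le m_{\phi+u}+\delta/3-u(x)\le m_\phi+\delta/3+\delta/3+\delta/3=m_\phi+\delta .
$$
This is exactly the statement $x\in\Omega_\phi^S(\delta)$, which proves the inclusion.

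There is no real obstacle. The only point needing care is points where $\phi(x)=+\infty$. Such a point cannot lie in $\Omega_{\phi+u}^S(\delta/3)$, because $m_{\phi+u}$ is finite and $u$ is real-valued. So all the arithmetic above involves finite quantities.
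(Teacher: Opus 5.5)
Your proof is correct and follows essentially the same chain of inequalities as the paper: bound $-u(x)$ by $\delta/3$, apply the defining inequality of $\Omega_{\phi+u}^S(\delta/3)$, and use $\inf_S(\phi+u)\le\inf_S\phi+\delta/3$. Your remark about points where $\phi(x)=+\infty$ adds a little care that the paper leaves implicit; this matters because the claim is later applied to $\phi=f+h+g$, which can take the value $+\infty$.
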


\begin{proof}[Proof of Claim]
If $x\in\Omega_{\phi+u}^S(\delta/3)$, then

\[
\phi(x)
< \phi(x)+u(x)+\tfrac{\delta}{3}
\le \inf_S(\phi+u)+\tfrac{2\delta}{3}
< \inf_S\phi+\delta.
\]

Thus $x\in\Omega_\phi^S(\delta)$.
\end{proof}

\medskip
\noindent\textbf{Step 3. Applying Proposition~\ref{proposition:1}.}
Take any $u\in\mathbb{P}$ with $\|u\|_{\mathbb{P}}<\delta/(3C)$, so $\|u\|_\infty<\delta/3$ by \ref{lab:d2-2}.  
Applying the Claim with $\phi=f+h+g$ gives

\[
\Omega_{f+h+g+u}^S(\delta/3)
\subset \Omega_{f+h+g}^S(\delta).
\]

Since $x\in\Omega_{f+h}^S(\delta)\cap\Omega_g^S(\delta)$,  
Proposition~\ref{proposition:1} yields

\[
\Omega_{f+h+g}^S(\delta)
\subset \Omega_{f+h}^S(3\delta)\cap \Omega_g^S(3\delta)
\subset \Omega_g^S(3\delta).
\]

Hence

\[
\Omega_{f+h+g+u}^S(\delta/3)\subset \Omega_g^S(3\delta),
\]

and therefore

\[
\alpha\big(\Omega_{f+h+g+u}^S(\delta/3)\big)
\le \alpha(\Omega_g^S(3\delta))
\le \tfrac1n.
\]

Thus
\begin{equation}\label{eq:hole}
h+g+\big(\mathbb{P}\cap B(0,\delta/(3C))\big)\subset A_n.
\end{equation}

\medskip
\noindent\textbf{Step 4. Constructing a porosity ball.}
Choose $x_0\in\operatorname{int}\mathbb{P}$ with $\|x_0\|=1$, and let $r_0\in(0,1)$ satisfy
$B(x_0,r_0)\subset\operatorname{int}\mathbb{P}$.  
Then

\[
\frac{\delta}{6C}x_0 + \frac{r_0\delta}{6C}B(0,1)
\subset \frac{\delta}{3C}B(0,1).
\]

Since $\|g\|_{\mathbb{P}}\le\varepsilon$ and $\delta/(3C)<2\varepsilon$, we obtain

\[
B\Big(h+g+\tfrac{\delta}{6C}x_0,\ \tfrac{r_0\delta}{6C}\Big)
\subset B(h,3\varepsilon).
\]

By \eqref{eq:hole},

\[
B\Big(h+g+\tfrac{\delta}{6C}x_0,\ \tfrac{r_0\delta}{6C}\Big)
\subset A_n.
\]

\medskip
\noindent\textbf{Step 5. Conclusion.}
The radius of this ball is

\[
\frac{r_0\delta}{6C}
=\frac{r_0K_n}{6C}\varepsilon,
\]
so the porosity constant is

\[
\frac{\frac{r_0K_n}{6C}\varepsilon}{3\varepsilon}
=\frac{r_0K_n}{18C},
\]
independent of $h$.  
Thus $\mathbb{P}\setminus A_n$ is porous.  
Since

\[
P_0=\bigcup_{n=1}^\infty (\mathbb{P}\setminus A_n),
\]
the set $P_0$ is $\sigma$-porous.
\end{proof}
}

 
\subsection{Auxiliary Results About Musielak-Orlicz Sequence Spaces}

\begin{lemma}\label{lemma:8A}
Let $\Phi$ be a non--degenerate Musielak--Orlicz function.  
Suppose there exists a sequence $\{x^{(n)}\}_{n=1}^\infty \subset \ell_\Phi$ such that

\[
I := \inf_{n\in\mathbb N} \|x^{(n)}\|_\Phi > 0,
\]

and

\[
\lim_{n\to\infty} e_\Phi(x^{(n)})
= \lim_{n\to\infty} \sum_{i=1}^\infty \Phi_i\!\left(x^{(n)}_i\right)
= 0.
\tag{9}
\]

Then $h_\Phi \not\cong \ell_\Phi$.  
In particular, there exists $x\in \ell_\Phi$ such that $x\notin h_\Phi$.
\end{lemma}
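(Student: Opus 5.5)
We argue directly, by a gliding hump construction: we build an element $z\in\ell_\Phi$ with $\widetilde\Phi(z)<\infty$ but $\widetilde\Phi(\lambda z)=\infty$ for some $\lambda>0$. It then remains to check that such a $z$ cannot lie in $h_\Phi$. For that we first record the standard fact that
$$
h_\Phi=\Big\{x\in\ell_\Phi:\ \widetilde\Phi(\lambda x)<\infty \ \text{for every}\ \lambda>0\Big\},
$$
i.e. that this set is already norm closed. Indeed, let $x^{m}\to x$ in norm with $\widetilde\Phi(\mu x^{m})<\infty$ for all $\mu>0$, and fix $\lambda>0$. Choose $m$ with $\|x-x^{m}\|_\Phi\le 1/(2\lambda)$. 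Then $\widetilde\Phi(2\lambda(x-x^m))\le 1$, and by convexity
$$
\widetilde\Phi(\lambda x)\le \tfrac12\widetilde\Phi\big(2\lambda(x-x^m)\big)+\tfrac12\widetilde\Phi(2\lambda x^m)<\infty .
$$

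\textbf{Preliminary observations.} Put $\lambda_0=2/I$. Since $\|x^{(n)}\|_\Phi\ge I>I/2$, the definition of the Luxemburg norm gives $\widetilde\Phi(\lambda_0x^{(n)})>1$ (possibly $=\infty$) for every $n$. Also $\Phi_i(x^{(n)}_i)\le\widetilde\Phi(x^{(n)})\to0$. Because each $\Phi_i$ is non-degenerate, even and nondecreasing on $[0,\infty)$, this forces $x^{(n)}_i\to0$ as $n\to\infty$ for each fixed $i$. By continuity of $\Phi_i$ we then get, for every fixed $N$,
$$
\sum_{i\le N}\Phi_i(\lambda_0x^{(n)}_i)\to0 \quad (n\to\infty).
$$

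\textbf{Inductive choice of blocks.} We choose indices $n_1<n_2<\dots$ and integers $0=p_1<p_2<\dots$ as follows. Given $p_k$, pick $n_k$ so large that
$$
\widetilde\Phi(x^{(n_k)})\le 2^{-k}
\quad\text{and}\quad
\sum_{i\le p_k}\Phi_i(\lambda_0x^{(n_k)}_i)<\tfrac12 .
$$
Then $\sum_{i>p_k}\Phi_i(\lambda_0x^{(n_k)}_i)>\tfrac12$. By monotone convergence of the partial sums, choose $p_{k+1}>p_k$ with
$$
\sum_{p_k<i\le p_{k+1}}\Phi_i(\lambda_0x^{(n_k)}_i)>\tfrac12 .
$$
Let $y^{(k)}$ be the restriction of $x^{(n_k)}$ to the coordinates $(p_k,p_{k+1}]$, and set $z=\sum_k y^{(k)}$, defined coordinatewise. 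The supports of the $y^{(k)}$ are disjoint and the modular is additive over disjoint supports. Hence
$$
\widetilde\Phi(z)=\sum_k\widetilde\Phi(y^{(k)})\le\sum_k 2^{-k}\le1,
$$
so $z\in\ell_\Phi$. On the other hand,
$$
\widetilde\Phi(\lambda_0z)=\sum_k\widetilde\Phi(\lambda_0y^{(k)})\ge\sum_k\tfrac12=\infty .
$$
By the characterization of $h_\Phi$ above, $z\notin h_\Phi$, and therefore $h_\Phi\neq\ell_\Phi$.

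The main obstacle is the block selection. We need the lower bound $\widetilde\Phi(\lambda_0 y^{(k)})>1/2$ to survive both cutting away the initial coordinates and cutting off the tail. The two ingredients that handle this are the coordinatewise convergence $x^{(n)}_i\to0$, which relies on non-degeneracy, and the possibility that $\widetilde\Phi(\lambda_0x^{(n)})=\infty$; the latter is harmless, since monotone convergence still yields a finite block exceeding $1/2$.
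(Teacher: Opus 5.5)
Your proof is correct and is essentially the paper's gliding-hump argument. Non-degeneracy forces $x^{(n)}_i\to 0$ coordinatewise, which is exactly why the paper's Case~II is impossible. Disjoint finite blocks, each with small unscaled modular but with modular of the rescaled block at least $\tfrac12$, are then glued into an element of $\ell_\Phi\setminus h_\Phi$. Your version is tighter than the paper's at three points:
\begin{enumerate}
\item The scaling $\lambda_0=2/I$ gives the strict bound $\widetilde\Phi(\lambda_0x^{(n)})>1$ even when $\|x^{(n)}\|_\Phi=I$.
\item Choosing $p_{k+1}$ directly from the increasing partial sums avoids the paper's unjustified assumption that $e_\Phi(x^{(n)}/I)<\infty$ in its tail step.
\item You verify explicitly that $h_\Phi=\{x:\widetilde\Phi(\lambda x)<\infty\ \forall\lambda>0\}$, which the paper uses tacitly. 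For that identification, note also that this set is a linear subspace, which follows from the same convexity estimate.
\end{enumerate}
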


\begin{proof}
{
Since $\|x^{(n)}\|_\Phi \ge I$, by the definition of the Luxemburg norm,

\[
e_\Phi\!\left(\frac{x^{(n)}}{I}\right)
= \sum_{i=1}^\infty \Phi_i\!\left(\frac{x^{(n)}_i}{I}\right)
\ge 1,
\qquad n\in\mathbb N.
\tag{10}
\]

We consider two cases.

\medskip
\noindent
\textbf{Case I.}  
Assume that for every $i\in\mathbb N$,

\[
\lim_{n\to\infty}
\Phi_i\!\left(\frac{x^{(n)}_i}{I}\right) = 0.
\]

\medskip
\textbf{Step 1: Small initial blocks.}
Fix $m\in\mathbb N$.  
Since each term tends to $0$, the finite sum

\[
\sum_{i=1}^m \Phi_i\!\left(\frac{x^{(n)}_i}{I}\right)
\to 0.
\]

Thus there exists $N(m)$ such that for all $n\ge N(m)$,

\[
e_\Phi\!\left(\sum_{i=1}^m \frac{x^{(n)}_i}{I} e_i\right)
\le \frac14.
\tag{11}
\]

\medskip
\textbf{Step 2: Small tails.}
For each $n$, since $e_\Phi(x^{(n)}/I)\ge 1$ and finite,  
there exists $p_n\in\mathbb N$ such that

\[
\sum_{i=p_n+1}^\infty 
\Phi_i\!\left(\frac{x^{(n)}_i}{I}\right)
\le \frac14.
\tag{13}
\]

\medskip
\textbf{Step 3: A large middle block.}
Let $N_1,N_2\in\mathbb N$.  
Set $m=N_2-1$ and choose $n\ge \max\{N_1,\,N(m)\}$.

Then:

- by (11), the block $1,\dots,N_2-1$ contributes at most $1/4$,
- by (13), the tail $p_n+1,\dots,\infty$ contributes at most $1/4$,
- by (10), the whole vector contributes at least $1$.

Hence the middle block satisfies

\[
e_\Phi\!\left(\sum_{i=N_2}^{p_n} \frac{x^{(n)}_i}{I} e_i\right)
\ge \frac12.
\tag{14}
\]

\medskip
\textbf{Step 4: Choosing blocks with small unscaled modular.}
From (9), $e_\Phi(x^{(n)})\to 0$.  
Thus for each $n,N$ there exists $j_{N,n}$ such that

\[
e_\Phi\!\left(\sum_{i=N}^{p_{j_{N,n}}} x^{(j_{N,n})}_i e_i\right)
\le \frac{1}{2n},
\]

while by (14),

\[
e_\Phi\!\left(\sum_{i=N}^{p_{j_{N,n}}}
\frac{x^{(j_{N,n})}_i}{I} e_i\right)
\ge \frac12.
\tag{15}
\]

\medskip
\textbf{Step 5: Diagonal block construction.}
Define $q_1=0$ and recursively

\[
q_{n+1} := p_{\,j_{q_n+1,n}}.
\]

Then $q_1<q_2<q_3<\cdots$.

Define

\[
x^\ast := 
\sum_{n=1}^\infty 
\sum_{i=q_n+1}^{q_{n+1}}
x^{(j_{q_n+1,n})}_i\, e_i.
\]

Since the blocks are disjoint,

\[
e_\Phi(x^\ast)
= \sum_{n=1}^\infty 
e_\Phi\!\left(\sum_{i=q_n+1}^{q_{n+1}}
x^{(j_{q_n+1,n})}_i e_i\right)
\le \sum_{n=1}^\infty \frac{1}{2^n}
< \infty,
\]

so $x^\ast\in \ell_\Phi$.

But also,

\[
e_\Phi\!\left(\frac{x^\ast}{I}\right)
= \sum_{n=1}^\infty 
e_\Phi\!\left(\sum_{i=q_n+1}^{q_{n+1}}
\frac{x^{(j_{q_n+1,n})}_i}{I} e_i\right)
\ge \sum_{n=1}^\infty \frac12
= \infty,
\]

so $x^\ast\notin h_\Phi$.

Thus $\ell_\Phi\neq h_\Phi$ in Case I.

\bigskip
\noindent
\textbf{Case II.}  
Assume Case I fails.  
Then there exists $i_0$ such that

\[
\limsup_{n\to\infty}
\Phi_{i_0}\!\left(\frac{x^{(n)}_{i_0}}{I}\right) > 0.
\]

Choose a subsequence $\{x^{(n_k)}\}$ such that

\[
\Phi_{i_0}\!\left(\frac{x^{(n_k)}_{i_0}}{I}\right)
\to c_0 > 0.
\tag{16}
\]

From (9),

\[
\Phi_{i_0}\!\left(x^{(n_k)}_{i_0}\right)
\le e_\Phi(x^{(n_k)}) \to 0.
\tag{18}
\]

Let $r_k := x^{(n_k)}_{i_0}$.  
Then

\[
\Phi_{i_0}(r_k)\to 0,
\qquad
\Phi_{i_0}\!\left(\frac{r_k}{I}\right)\to c_0>0.
\tag{19}
\]

Since $\Phi_{i_0}$ is non--degenerate,  
$\Phi_{i_0}(t_n)\to 0$ implies $t_n\to 0$.  
Thus $r_k\to 0$, hence $r_k/I\to 0$, and by continuity,

\[
\Phi_{i_0}\!\left(\frac{r_k}{I}\right)\to 0,
\]

contradicting (19).

\bigskip
\noindent
Since Case II is impossible, Case I must occur, and in that case we constructed
$x^\ast\in \ell_\Phi\setminus h_\Phi$.  
Thus $h_\Phi\not\cong \ell_\Phi$.}
\end{proof}


{
\begin{lemma}\label{lemma:9}
Let $\Phi$ be a non--degenerate Musielak--Orlicz function.  
Then $h_\Phi \cong \ell_\Phi$ if and only if $\widetilde{\Phi}$ has a strong minimum at $0$.
\end{lemma}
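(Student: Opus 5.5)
Here "strong minimum at $0$" means: $\widetilde\Phi(0)=0$ and every minimizing sequence converges to $0$. Equivalently, whenever $\widetilde\Phi(x^{(n)})\to 0$ we have $\|x^{(n)}\|_\Phi\to 0$. The plan is to prove the two implications separately.

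\textbf{Direction ($\Leftarrow$).} Assume $\widetilde\Phi$ has a strong minimum at $0$ and suppose, to the contrary, that $h_\Phi\not\cong\ell_\Phi$. We will produce a sequence with $\widetilde\Phi(x^{(n)})\to0$ but $\|x^{(n)}\|_\Phi\geq 1$, which contradicts the strong minimum.

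Pick $x\in\ell_\Phi\setminus h_\Phi$. By scaling we may assume $\widetilde\Phi(x)<\infty$. Since $x\notin h_\Phi$, the tails $x-\sum_{i\le N}x_ie_i$ do not tend to $0$ in norm. Otherwise $x$ would lie in the closed span of the $e_i$, and each finitely supported vector has finite modular at every scale, so $x$ would be in $h_\Phi$.

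Hence there is $c>0$ with $\|T_Nx\|_\Phi\ge c$ for infinitely many $N$, where $T_N$ denotes the tail projection. Because $\widetilde\Phi(x)<\infty$, the tail sums satisfy $\widetilde\Phi(T_Nx)=\sum_{i>N}\Phi_i(x_i)\to0$. Then $x^{(N)}:=T_Nx$ is the required sequence: modular tending to $0$, norm bounded below. Alternatively, this direction is just the contrapositive of Lemma~\ref{lemma:8A} applied to such a sequence.

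\textbf{Direction ($\Rightarrow$).} Assume $h_\Phi\cong\ell_\Phi$, so that $\Phi$ satisfies the $\delta_2$-condition by the result of Kami\'nska quoted above. Suppose $\widetilde\Phi(x^{(n)})\to 0$ but $\|x^{(n)}\|_\Phi\ge I>0$ along a subsequence. This is exactly the hypothesis of Lemma~\ref{lemma:8A}, whose construction yields $x^\ast\in\ell_\Phi\setminus h_\Phi$, contradicting $h_\Phi\cong\ell_\Phi$. So this direction reduces to Lemma~\ref{lemma:8A} directly.

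Before invoking the lemma, two small points must be checked:
\begin{itemize}
\item One must note that $\widetilde\Phi\ge0=\widetilde\Phi(0)$, so that $0$ is indeed the minimizer.
\item One must handle sequences of vectors $x^{(n)}$ with infinite modular. These do not occur in a minimizing sequence, so nothing extra is needed there.
\end{itemize}

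\textbf{Main obstacle.} The main subtlety is in direction ($\Leftarrow$): showing rigorously that $x\notin h_\Phi$ forces the tail norms to stay bounded away from zero. This requires that the finitely supported vectors are dense in $h_\Phi$, i.e.\ that $\{e_i\}$ is a basis of $h_\Phi$ as stated in the preliminaries. It also requires care in passing from $\sum\Phi_i(\lambda x_i)<\infty$ for some $\lambda$ to working with $\lambda x$, which is harmless since both membership in $h_\Phi$ and the norm scale homogeneously. Once this is in place, the non-degeneracy of $\Phi$ only enters through Lemma~\ref{lemma:8A}.
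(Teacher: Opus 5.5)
Your proof is correct. For $(\Rightarrow)$ you do what the paper does: pass to a subsequence with $\inf_n\|x^{(n)}\|_\Phi>0$ and $\widetilde\Phi(x^{(n)})\to0$, then invoke Lemma~\ref{lemma:8A}. Your appeal to the $\delta_2$-condition is never used and can be dropped.

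For $(\Leftarrow)$ your route differs from the paper's in how you get the lower bound on the norms.

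\textbf{The paper's route} uses the modular description of $h_\Phi$. It takes $x^*\notin h_\Phi$ with $\widetilde\Phi(x^*)<\infty$ but $\widetilde\Phi(x^*/\lambda)=\infty$ for some $\lambda>0$. It then picks finite blocks $x^{(n)}=\sum_{i=n+1}^{m_n}x^*_ie_i$ with $\widetilde\Phi(x^{(n)}/\lambda)\ge 1$. The Luxemburg norm then gives the explicit bound $\|x^{(n)}\|_\Phi\ge\lambda$, while $\widetilde\Phi(x^{(n)})$ is dominated by the vanishing modular tail of $x^*$.

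\textbf{Your route} uses only that $h_\Phi$ is norm closed and contains the unit vectors. Hence the tails $T_Nx$ of an element $x\notin h_\Phi$ cannot tend to $0$ in norm. By the lattice property their norms are non-increasing, so they are in fact bounded below by some $c>0$ for every $N$, not just infinitely many. This is shorter and purely topological, but the constant $c$ is not explicit. The paper's version is quantitative and produces finitely supported test vectors.

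Two small corrections to your write-up:
\begin{enumerate}
\item Your stated ``main obstacle'' is overstated. You never need density of finitely supported vectors in $h_\Phi$, i.e.\ the basis property. You only need the inclusion $\overline{\mathrm{span}}\{e_i\}\subset h_\Phi$, which is immediate from the definition of $h_\Phi$ as a closed subspace and from $\widetilde\Phi(te_i)=\Phi_i(t)<\infty$.
\item Your remark that $(\Leftarrow)$ is ``the contrapositive of Lemma~\ref{lemma:8A}'' is a slip. The contrapositive of Lemma~\ref{lemma:8A} is the $(\Rightarrow)$ direction. $(\Leftarrow)$ needs the converse of Lemma~\ref{lemma:8A}, which is exactly what your tail argument (and the paper's block argument) supplies.
\end{enumerate}
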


\begin{proof}
\textbf{($\Rightarrow$)}  
Assume $h_\Phi \cong \ell_\Phi$.  
Since $\Phi_i(0)=0$ and $\Phi_i(t)>0$ for $t\neq 0$ (non--degeneracy), we have

\[
\widetilde{\Phi}(x)=\sum_{i=1}^\infty \Phi_i(x_i)\ge 0,
\qquad 
\widetilde{\Phi}(x)=0 \iff x=0.
\]

Thus $0$ is the unique minimizer of $\widetilde{\Phi}$.

Suppose $\widetilde{\Phi}$ does \emph{not} have a strong minimum at $0$.  
Then there exists a sequence $\{x^{(n)}\}\subset \ell_\Phi$ such that

\[
\widetilde{\Phi}(x^{(n)})\to 0
\quad\text{but}\quad
\|x^{(n)}\|_\Phi \not\to 0.
\]

Passing to a subsequence if necessary, we may assume

\[
\inf_{n\in\mathbb N}\|x^{(n)}\|_\Phi > 0.
\]

Since

\[
\widetilde{\Phi}(x^{(n)})=\sum_{i=1}^\infty \Phi_i(x^{(n)}_i)\to 0,
\]
Lemma \ref{lemma:8A} applies and yields an element $x\in \ell_\Phi$ with $x\notin h_\Phi$.  
This contradicts $h_\Phi\cong \ell_\Phi$.  
Hence $\widetilde{\Phi}$ must have a strong minimum at $0$.

\bigskip
\textbf{($\Leftarrow$)}  
Assume $\widetilde{\Phi}$ has a strong minimum at $0$.  
Suppose, toward a contradiction, that $h_\Phi\neq \ell_\Phi$.  
Then there exists $x^*\in \ell_\Phi$ with $x^*\notin h_\Phi$.  
Thus there exists $\lambda>0$ such that

\[
\widetilde{\Phi}(x^*)<\infty,
\qquad
\widetilde{\Phi}\!\left(\frac{x^*}{\lambda}\right)=\infty.
\]

Since

\[
\widetilde{\Phi}\!\left(\frac{x^*}{\lambda}\right)
= \sum_{i=1}^\infty \Phi_i\!\left(\frac{x^*_i}{\lambda}\right)
= \infty,
\]
and the partial sums are finite and increasing, for each $n\in\mathbb N$ there exists $m_n\ge n+1$ such that
\begin{equation}\label{eq:17}
1 \le 
\widetilde{\Phi}\!\left(
\frac{\sum_{i=n+1}^{m_n} x^*_i e_i}{\lambda}
\right)
< \infty.
\end{equation}

Define

\[
x^{(n)} := \sum_{i=n+1}^{m_n} x^*_i e_i \in \ell_\Phi.
\]

From \eqref{eq:17},

\[
\widetilde{\Phi}\!\left(\frac{x^{(n)}}{\lambda}\right)\ge 1.
\]

By the Luxemburg norm definition,

\[
\left\|\frac{x^{(n)}}{\lambda}\right\|_\Phi \ge 1
\quad\Longrightarrow\quad
\|x^{(n)}\|_\Phi \ge \lambda.
\]

Thus

\[
\inf_{n\in\mathbb N}\|x^{(n)}\|_\Phi \ge \lambda > 0.
\]

Since $\widetilde{\Phi}(x^*)<\infty$, the tail sums satisfy

\[
\sum_{i=n+1}^\infty \Phi_i(x^*_i)\xrightarrow[n\to\infty]{} 0.
\]

Hence

\[
\widetilde{\Phi}(x^{(n)})
= \widetilde{\Phi}\!\left(\sum_{i=n+1}^{m_n} x^*_i e_i\right)
\le \sum_{i=n+1}^\infty \Phi_i(x^*_i)
\xrightarrow[n\to\infty]{} 0.
\]

Thus we have constructed a sequence $\{x^{(n)}\}\subset \ell_\Phi$ such that

\[
\widetilde{\Phi}(x^{(n)})\to 0
\quad\text{and}\quad
\inf_n \|x^{(n)}\|_\Phi > 0,
\]

contradicting the assumption that $\widetilde{\Phi}$ has a strong minimum at $0$.

Therefore $h_\Phi = \ell_\Phi$.
\end{proof}
}


Let $\Phi$ be a Musielak-Orlicz function and $h_\Phi\cong \ell_\Phi$, then $\sup\left\{\widetilde{\Phi}(x):\|x\|_\Phi\leq t\right\}<\infty$ for any $t\geq 0$. Let us denote $\nu(t)=\sup\left\{\widetilde{\Phi}(x):\|x\|_\Phi\leq t\right\}$. 

\begin{lemma}\label{lemma:10}
Let $\Phi$ be a non--degenerate Musielak--Orlicz function and suppose $h_\Phi\cong \ell_\Phi$. 
Then for every $K>0$ the functional $\widetilde{\Phi}_a$ is bounded and Lipschitz on $K B_{\ell_\Phi}$, i.e.,
\begin{equation}\label{equation:18}
  \sup_{x\in K B_{\ell_\Phi}} \bigl|\widetilde{\Phi}_a(x)\bigr|
  \le \|a\|_\infty \,\nu(K),
\end{equation}
and

\[
  \bigl|\widetilde{\Phi}_a(x)-\widetilde{\Phi}_a(y)\bigr|
  \le \|a\|_\infty \,\nu(K+1)\,\|x-y\|_\Phi
\]

for every $x,y\in K B_{\ell_\Phi}$.
\end{lemma}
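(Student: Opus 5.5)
The plan is to first establish the bound (\ref{equation:18}) directly from the definitions, and then derive the Lipschitz estimate from convexity of each $\Phi_i$. The finiteness of $\nu(t)$ for every $t\ge0$ is taken from the remark preceding the lemma, which follows from $h_\Phi\cong\ell_\Phi$ and Lemma \ref{lemma:9}.

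\textbf{Step 1: the uniform bound.} For $x\in KB_{\ell_\Phi}$ and $a\in\ell_\infty$ we have $|a_i\Phi_i(x_i)|\le\|a\|_\infty\Phi_i(x_i)$, since $\Phi_i\ge0$. Summing over $i$ gives
\[
|\widetilde{\Phi}_a(x)|\le\|a\|_\infty\widetilde{\Phi}(x)\le\|a\|_\infty\nu(K).
\]
The series converges absolutely, so $\widetilde{\Phi}_a$ is well defined on $KB_{\ell_\Phi}$.

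\textbf{Step 2: the Lipschitz estimate.} The key one-dimensional inequality comes from convexity. Fix $x\ne y$ in $KB_{\ell_\Phi}$, put $d=\|x-y\|_\Phi>0$, and set
\[
z=y+\frac{x-y}{d}.
\]
Then $\|z\|_\Phi\le K+1$. We can write $x=(1-d)y+dz$ when $d\le 1$, and convexity of each $\Phi_i$ then gives
\[
\Phi_i(x_i)-\Phi_i(y_i)\le d\bigl(\Phi_i(z_i)-\Phi_i(y_i)\bigr)\le d\,\Phi_i(z_i).
\]
Symmetrically, with $z'=x+(y-x)/d$, we get $\Phi_i(y_i)-\Phi_i(x_i)\le d\,\Phi_i(z'_i)$.

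To combine these with weights $a_i$ of arbitrary sign, bound termwise:
\[
|a_i|\,|\Phi_i(x_i)-\Phi_i(y_i)|\le \|a\|_\infty\, d\,\max\{\Phi_i(z_i),\Phi_i(z'_i)\}\le \|a\|_\infty\, d\,\bigl(\Phi_i(z_i)+\Phi_i(z'_i)\bigr).
\]
Summing, this yields a constant $2\nu(K+1)$. To recover exactly $\nu(K+1)$, one can either note that the sign of $\Phi_i(x_i)-\Phi_i(y_i)$ decides which of $z,z'$ is needed, or split the indices into the sets where $\Phi_i(x_i)\ge\Phi_i(y_i)$ and where it is not. The vectors $z$ and $z'$ restricted to these disjoint index sets glue into a single vector $w$ with $|w_i|\le\max(|z_i|,|z'_i|)$. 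Because $\ell_\Phi$ is a Köthe space, I would try to check that $\|w\|_\Phi\le K+1$ coordinatewise on each piece. This is the step I expect to be delicate: bounding the glued vector's norm by $K+1$ rather than $2(K+1)$. If it fails, I would accept a harmless constant, or use the sharper description $w_i=$ whichever of $x_i+(x_i-y_i)/d$ or $y_i+(y_i-x_i)/d$ has the larger modulus, both of which lie in $(K+1)B$ in a lattice sense.

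\textbf{Step 3: the case $d>1$.} Here the estimate is trivial from Step 1, since
\[
|\widetilde{\Phi}_a(x)-\widetilde{\Phi}_a(y)|\le 2\|a\|_\infty\nu(K)\le 2\|a\|_\infty\nu(K+1)\,d,
\]
again up to the same factor. The main care in the write-up is therefore the constant bookkeeping, not the structure of the argument.
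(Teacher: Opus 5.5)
As written, your argument proves the Lipschitz estimate only with constant $2\|a\|_\infty\nu(K+1)$, not the stated $\|a\|_\infty\nu(K+1)$. Step 1 is fine, and it handles signed $a$ more directly than the paper does. The factor $2$ enters in two places.

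In Step 2 you leave $\|w\|_\Phi\le K+1$ unverified, and neither justification you sketch works. From $|w_i|\le\max(|z_i|,|z_i'|)$, the Köthe property only gives $\|w\|_\Phi\le\|\,|z|+|z'|\,\|_\Phi\le 2(K+1)$. The ``larger modulus'' variant, which extrapolates from $\max(|x_i|,|y_i|)$, is genuinely too large: for $x=Ke_1$, $y=Ke_2$ in $\ell_p$ that vector has norm $2^{1/p}K+1>K+1$. In Step 3 the crude bound $2\|a\|_\infty\nu(K)$ loses a factor $2$ when $d>1$, even for $a\ge 0$.

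Both losses can be removed within your own approach. Let $I=\{i:|x_i|\ge|y_i|\}$. By monotonicity, $\Phi_i(x_i)\ge\Phi_i(y_i)$ on $I$ and the reverse holds off $I$. Write the glued vector as $w=v+\sigma/d$, where
\begin{itemize}
\item $v_i=y_i$ and $\sigma_i=x_i-y_i$ for $i\in I$;
\item $v_i=x_i$ and $\sigma_i=y_i-x_i$ for $i\notin I$.
\end{itemize}
The missing observation is that $|v_i|=\min(|x_i|,|y_i|)\le|x_i|$ and $|\sigma_i|=|x_i-y_i|$. The lattice property then gives $\|v\|_\Phi\le K$ and $\|\sigma\|_\Phi=d$, hence $\|w\|_\Phi\le K+1$. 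So for $d\le 1$,
\[
\sum_i|\Phi_i(x_i)-\Phi_i(y_i)|\le d\,\widetilde\Phi(w)\le d\,\nu(K+1).
\]
For $d>1$, cut the segment $[y,x]\subset KB_{\ell_\Phi}$ into $n$ pieces of length $d/n\le 1$ and telescope. This yields $|\widetilde\Phi_a(x)-\widetilde\Phi_a(y)|\le\|a\|_\infty\nu(K+1)\|x-y\|_\Phi$ for every $a\in\ell_\infty$.

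The paper takes a different route. It first treats $a\in\ell_\infty^+$, where $\widetilde\Phi_a$ is convex. Its directional derivatives at points of $KB_{\ell_\Phi}$ in unit directions are bounded by $\|a\|_\infty\nu(K+1)$, because $x\pm h\in(K+1)B_{\ell_\Phi}$. Integrating along segments then gives the sharp constant for all $d$ at once, with no subdivision. For signed $a$ the paper splits $a=a_+-a_-$, which actually yields only $(\|a_+\|_\infty+\|a_-\|_\infty)\nu(K+1)\le 2\|a\|_\infty\nu(K+1)$ — the same factor you ran into. So your termwise route, once completed as above, is what really delivers the stated constant for signed weights. The factor $2$ is harmless for the later uses in Lemma~\ref{lemma:11} and Theorem~\ref{thm:generic-strong-min}.

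Finally, finiteness of $\nu(t)$ does not follow from Lemma~\ref{lemma:9}, which only says that $\widetilde\Phi(x^{(n)})\to 0$ forces $x^{(n)}\to 0$. Like the paper, you are assuming it, and it is a real extra hypothesis. Take $\Phi_n(t)=t^2$ for $|t|\le 1$ and $\Phi_n(t)=1+(n+2)(|t|-1)$ for $|t|>1$. Then $\ell_\Phi=h_\Phi=\ell_2$ isometrically, yet $\widetilde\Phi(2e_n)=n+3$, so $\nu(2)=\infty$.
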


{
\begin{proof}
We proceed in two steps.

\medskip
\noindent\textbf{Step 1: $a\in \ell_\infty^+$.}
Assume first that $a\in\ell_\infty^+$, i.e.\ $a_i\ge 0$ for all $i\in\mathbb N$. Then

\[
  \widetilde{\Phi}_a(x)
  = \sum_{i=1}^\infty a_i \Phi_i(x_i)
  \ge 0,
\]

and

\[
  \widetilde{\Phi}_a(x)
  \le \|a\|_\infty \sum_{i=1}^\infty \Phi_i(x_i)
  = \|a\|_\infty \,\widetilde{\Phi}(x).
\]

Hence, for $x\in K B_{\ell_\Phi}$,

\[
  \widetilde{\Phi}_a(x)
  \le \|a\|_\infty \,\nu(K),
\]

which proves boundedness on $K B_{\ell_\Phi}$.

Since each $\Phi_i$ is convex and $a_i\ge 0$, the functional $\widetilde{\Phi}_a$ is convex.  
Thus it has directional derivatives $\widetilde{\Phi}_a'(x;h)$ for all $x,h$.

Let $x\in K B_{\ell_\Phi}$ and $h\in S_{\ell_\Phi}$ (i.e.\ $\|h\|_\Phi=1$).  
By convexity,

\[
  \widetilde{\Phi}_a(x+h)
  \ge \widetilde{\Phi}_a(x) + \widetilde{\Phi}_a'(x;h).
\]

Since $\|x\|_\Phi\le K$ and $\|h\|_\Phi=1$, we have $x+h\in (K+1)B_{\ell_\Phi}$, and hence

\[
  \widetilde{\Phi}_a(x+h)
  \le \|a\|_\infty \,\nu(K+1),
  \qquad
  \widetilde{\Phi}_a(x)\ge 0.
\]

Therefore

\[
  \widetilde{\Phi}_a'(x;h)
  \le \|a\|_\infty \,\nu(K+1)
\]

for all $x\in K B_{\ell_\Phi}$ and all $h\in S_{\ell_\Phi}$.

Applying the same argument with $-h$ instead of $h$ gives

\[
  \widetilde{\Phi}_a'(x;-h)
  \le \|a\|_\infty \,\nu(K+1),
\]

which implies

\[
  -\widetilde{\Phi}_a'(x;h)
  \le \|a\|_\infty \,\nu(K+1),
\]

so

\[
  \bigl|\widetilde{\Phi}_a'(x;h)\bigr|
  \le \|a\|_\infty \,\nu(K+1)
\]

for all $x\in K B_{\ell_\Phi}$ and all $h\in S_{\ell_\Phi}$.

By standard convex analysis, a convex functional whose directional derivatives are uniformly bounded on the unit sphere is Lipschitz with the same constant. Hence

\[
  \bigl|\widetilde{\Phi}_a(x)-\widetilde{\Phi}_a(y)\bigr|
  \le \|a\|_\infty \,\nu(K+1)\,\|x-y\|_\Phi,
  \qquad x,y\in K B_{\ell_\Phi}.
\]

\medskip
\noindent\textbf{Step 2: general $a\in\ell_\infty$.}
For arbitrary $a\in\ell_\infty$, write $a=a_+ - a_-$, where

\[
  (a_+)_i = \max\{a_i,0\},\qquad (a_-)_i = -\min\{a_i,0\},
\]

so that $a_+,a_-\in\ell_\infty^+$ and $\|a\|_\infty = \max\{\|a_+\|_\infty,\|a_-\|_\infty\}$.  
Then

\[
  \widetilde{\Phi}_a = \widetilde{\Phi}_{a_+} - \widetilde{\Phi}_{a_-}.
\]

Both $\widetilde{\Phi}_{a_+}$ and $\widetilde{\Phi}_{a_-}$ satisfy the boundedness and Lipschitz estimates from Step~1 with constants controlled by $\|a_+\|_\infty$ and $\|a_-\|_\infty$, respectively. Using

\[
  \|a_+\|_\infty,\ \|a_-\|_\infty \le \|a\|_\infty,
\]

we obtain the same type of bounds for $\widetilde{\Phi}_a$, with constants depending only on $\|a\|_\infty$. This proves the lemma.
\end{proof}

Set $\mathbb{P}=\left\{\widetilde{\Phi}_a:a\in \ell_\infty^{+}\right\}$,
where $\ell^+$ is the cone of all non-negative sequences.   We will consider the ${\rm span}\{\mathbb{P}\}$,
i.e., the set of all functions $\widetilde{\Phi}_a, a\in l_\infty$. Define a norm in ${\rm span}\{\mathbb{P}\}$ as $\| \widetilde{\Phi}_a\|_{\mathbb{P}}=\|a\|_\infty$. 
It is easy to see that $({\rm span}\{\mathbb{P}\},\|\cdot\|_{\mathbb{P}})$ and $(\ell_\infty,\|\cdot\|_\infty)$ are isometric.


\begin{lemma}\label{lemma:11}
    $(\{{\rm span} \mathbb{P}\},\|\cdot\|_{\mathbb{P}})$ is a perturbation space in the strong sense on each nonempty, closed and bounded subset $S$ of $\ell_{\Phi}$, provided that $h_\Phi=\ell_\Phi$.
\end{lemma}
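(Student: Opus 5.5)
The plan is to check the three requirements of Definition \ref{definition:11} one after another. Conditions \ref{lab:d2-1} and \ref{lab:d2-2} are essentially immediate, and the real work is in condition \ref{lab:d8-3}. Fix a nonempty, closed and bounded $S\subset\ell_\Phi$ and pick $K>0$ with $S\subset KB_{\ell_\Phi}$.

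\textbf{Conditions \ref{lab:d2-1} and \ref{lab:d2-2}.} The set $\mathbb{P}$ is a convex cone because $a\mapsto\widetilde\Phi_a$ is linear and $\ell_\infty^+$ is a convex cone. Completeness follows from the isometry $({\rm span}\,\mathbb{P},\|\cdot\|_{\mathbb{P}})\cong(\ell_\infty,\|\cdot\|_\infty)$. Lemma \ref{lemma:10} shows that every $\widetilde\Phi_a$ is continuous (indeed Lipschitz) and bounded on $KB_{\ell_\Phi}\supset S$, with
\[
\sup_{x\in S}|\widetilde\Phi_a(x)|\le \nu(K)\|a\|_\infty .
\]
So the domination holds with $C=\nu(K)$. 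If $\nu(K)=0$ we enlarge it, for instance to $C=\max\{\nu(K),1\}$, so that $C>0$.

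\textbf{Condition \ref{lab:d8-3}.} Fix $n$. Since $h_\Phi=\ell_\Phi$, Lemma \ref{lemma:9} says $\widetilde\Phi$ has a strong minimum at $0$. Hence there is $\eta_n>0$ such that $\widetilde\Phi(z)\le\eta_n$ implies $\|z\|_\Phi\le \frac{1}{2n}$. Set
\[
K_n:=\min\{\eta_n/3,\ C\}\in(0,6C),
\]
which does not depend on $\varepsilon$ or on the point. Now let $\varepsilon>0$ and $x\in S$ be given, and put $\delta=K_n\varepsilon$. Because $\widetilde\Phi(x)<\infty$, we can choose $N$ (depending on $x$) with $\sum_{i>N}\Phi_i(x_i)\le K_n$. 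Define $a\in\ell_\infty^+$ by $a_i=0$ for $i\le N$ and $a_i=\varepsilon$ for $i>N$, and let $g=\widetilde\Phi_a$. Then:
\begin{itemize}
\item $\|g\|_{\mathbb{P}}=\varepsilon$.
\item $\inf_S g=0$, since $g\ge0$ and $g$ vanishes at $P_Nx$. Here $P_N$ is the projection onto the first $N$ coordinates, and $P_Nx\in S$ presumably; if not, we only need $\inf_S g\ge 0$ in the next step.
\item $g(x)=\varepsilon\sum_{i>N}\Phi_i(x_i)\le K_n\varepsilon=\delta$, so $x\in\Omega^S_g(\delta)$.
\end{itemize}

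\textbf{Measure of noncompactness.} Let $y\in\Omega_g^S(3\delta)$. Then $\varepsilon\sum_{i>N}\Phi_i(y_i)\le 3\delta+\inf_S g$. In the case $\inf_S g=0$ this gives that the tail $(I-P_N)y$ has modular at most $3K_n\le\eta_n$, hence $\|(I-P_N)y\|_\Phi\le \frac1{2n}$. Therefore
\[
\Omega_g^S(3\delta)\subset P_N(S)+\tfrac{1}{2n}B_{\ell_\Phi}.
\]
The set $P_N(S)$ is a bounded subset of an $N$-dimensional space, since $\|P_Ny\|_\Phi\le\|y\|_\Phi$ by the Köthe property. It is therefore relatively compact, and we conclude $\alpha(\Omega_g^S(3\delta))\le \frac1n$.

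\textbf{Main obstacle.} The delicate point is $\inf_S g$. The set $S$ need not contain $P_Nx$, so $\inf_S g$ may be strictly positive, and then the tail estimate above breaks. To handle this I would split $\Omega_g^S(3\delta)$ in a different way: bound the tail modular of $y$ by $3K_n+\inf_S g/\varepsilon$, using $\inf_S g\le g(x)\le \delta$. This gives a tail bound of $4K_n$, so I would instead set $K_n=\min\{\eta_n/4,C\}$ and the rest of the argument goes through unchanged.
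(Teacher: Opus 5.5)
Your proposal is correct and follows essentially the same route as the paper: you take $K_n$ from the strong minimum of $\widetilde\Phi$ at $0$ (Lemma \ref{lemma:9}), put weight $\varepsilon$ on a tail where $x$ has small modular, and conclude that $\Omega_g^S(3\delta)$ lies within $\frac{1}{2n}$ of the relatively compact set $P_N(S)$. The differences are minor: you put weight $0$ rather than the paper's $\theta$ on the first $N$ coordinates. You also use the explicit bound $\inf_S g\le g(x)\le\delta$, which gives the factor $4$; this actually repairs a step the paper passes over when it writes $g(y)\le 3\delta$ for $y\in\Omega_g^S(3\delta)$.
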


{
\begin{proof}
    We verify the axioms of Definition~\ref{definition:11}, i.e.\ \ref{lab:d2-1}, \ref{lab:d2-2}, and \ref{lab:d8-3}, on an arbitrary ball $KB_{\ell_\Phi}$.
    Since $S$ is bounded, there exists $K>0$ such that $S\subset KB_{\ell_\Phi}$.

    By Lemma~\ref{lemma:10}, every function in ${\rm span}\{\mathbb{P}\}$ is continuous. 
    Assumption \ref{lab:d2-1} is immediate, and \ref{lab:d2-2} follows from~\eqref{equation:18}.

    Define 
    \[
        \varphi(t)=\operatorname{diam}\bigl(\Omega_{\widetilde{\Phi}}(t)\bigr).
    \]

    By Lemma~\ref{lemma:9},
    \begin{equation}\label{equation:19}
        \lim_{t\to 0}\varphi(t)=0 .
    \end{equation}

    To verify \ref{lab:d8-3}, fix $n\in\mathbb{N}$ and $\varepsilon>0$.  
    By~\eqref{equation:19}, choose $K_n>0$ such that
    \begin{equation}\label{equation:20}
        \varphi(3K_n)<\frac{1}{n}.
    \end{equation}
    Set $\delta=K_n\varepsilon$.  
    Choose $\theta\in(0,\varepsilon)$ so that
    \begin{equation}\label{equation:21}
        \theta\,\nu(K)<\frac{\delta}{2}.
    \end{equation}

    Let $x\in S$ be arbitrary; then $\|x\|\le K$.  
    Since $x\in\ell_\Phi$, its modular tails vanish, so we may choose $N\in\mathbb{N}$ such that
    \begin{equation}\label{equation:22}
        \varepsilon\,\widetilde{\Phi}\!\left(\sum_{i=N+1}^\infty x_i e_i\right)
        <\frac{\delta}{2}.
    \end{equation}
    Define

\[
        \overline{a}
        =\theta\sum_{i=1}^N e_i
        +\varepsilon\sum_{i=N+1}^\infty e_i ,
    \qquad
        g(y)=\widetilde{\Phi}_{\overline{a}}(y)
        =\theta\,\widetilde{\Phi}\!\left(\sum_{i=1}^N y_i e_i\right)
        +\varepsilon\,\widetilde{\Phi}\!\left(\sum_{i=N+1}^\infty y_i e_i\right).
    \]
    Then $\|g\|_{\mathbb{P}}=\|\overline{a}\|_\infty=\varepsilon$.  
    Moreover,

\[ \widetilde{\Phi}\!\left(\sum_{i=1}^N x_i e_i\right)
        \le \widetilde{\Phi}(x)
        \le \nu(K),
    \]
    so by~\eqref{equation:21},
    \begin{equation}\label{1-N}
        \theta\,\widetilde{\Phi}\!\left(\sum_{i=1}^N x_i e_i\right)
        <\frac{\delta}{2}.
    \end{equation}
    Combining \eqref{equation:22} and \eqref{1-N} yields

\[ g(x)<\delta,
\]
    so in particular \(x\in\Omega_g^S(\delta)\).

    Now let $y\in\Omega_g^S(3\delta)$, i.e.\ $\|y\|\le K$ and $g(y)\le 3\delta$.  
    Then

\[
        \varepsilon\,\widetilde{\Phi}\!\left(\sum_{i=N+1}^\infty y_i e_i\right)
        \le 3\delta
        =3K_n\varepsilon,
    \]

    hence

\[
        \widetilde{\Phi}\!\left(\sum_{i=N+1}^\infty y_i e_i\right)
        \le 3K_n,
    \]

    so

\[
        \sum_{i=N+1}^\infty y_i e_i \in \Omega_{\widetilde{\Phi}}(3K_n).
    \]

    Since $0\in\Omega_{\widetilde{\Phi}}(3K_n)$ and $\operatorname{diam}(\Omega_{\widetilde{\Phi}}(3K_n))=\varphi(3K_n)<1/n$ by~\eqref{equation:20}, we obtain

\[
        \left\|\sum_{i=N+1}^\infty y_i e_i\right\|<\frac{1}{n}.
    \]

    Define

\[
        A=\left\{\sum_{i=1}^N z_i e_i : \sum_{i=1}^\infty z_i e_i\in S\right\},
    \]
 which is compact because $S$ is closed and bounded.
  
    Because

\[
        y=\sum_{i=1}^N y_i e_i+\sum_{i=N+1}^\infty y_i e_i,
        \qquad
        \sum_{i=1}^N y_i e_i\in A,
    \]

    we have

\[
        \operatorname{dist}(y,A)
        \le \left\|\sum_{i=N+1}^\infty y_i e_i\right\|
        <\frac{1}{n}.
    \]

    Since $y\in\Omega_g^S(3\delta)$ was arbitrary, it follows that

\[
        \alpha\bigl(\Omega_g^S(3\delta)\bigr)\le \frac{1}{n},
    \]

    which is exactly condition~\eqref{equation:4}.  
    This completes the proof.
\end{proof}

\begin{lemma}\label{Lem-A}
	Let $\Phi:[0,+\infty)\to [0,+\infty)$ be convex and $\Phi(x)\ge 0$ for all $x\ge 0$. Then the inequality
	$$
	\left|\Phi(x_2)-\Phi(x_1)\right|\geq\Phi(x_2-x_1)
	$$
	holds for all $x_2\ge x_1 \ge 0$.
\end{lemma}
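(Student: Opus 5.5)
The plan is to reduce the inequality to the superadditivity of $\Phi$ on $[0,+\infty)$, namely $\Phi(a+b)\ge\Phi(a)+\Phi(b)$ for $a,b\ge 0$, and then set $a=x_2-x_1$, $b=x_1$.

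We must use $\Phi(0)=0$, which holds for the Orlicz functions $\Phi_i$ to which the lemma will be applied. It is genuinely needed: for $x_1=x_2$ the inequality reads $0\ge\Phi(0)$, so together with $\Phi\ge 0$ it forces $\Phi(0)=0$. We therefore add $\Phi(0)=0$ to the hypotheses as the standing normalization.

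\textbf{Step 1 (superadditivity).} Let $a,b\ge 0$. If $a+b=0$ there is nothing to prove, so assume $a+b>0$. Write $a$ as the convex combination
$$
a=\tfrac{a}{a+b}(a+b)+\tfrac{b}{a+b}\cdot 0 .
$$
Convexity and $\Phi(0)=0$ give
$$
\Phi(a)\le \tfrac{a}{a+b}\Phi(a+b).
$$
Symmetrically,
$$
\Phi(b)\le \tfrac{b}{a+b}\Phi(a+b).
$$
Adding the two inequalities yields $\Phi(a)+\Phi(b)\le\Phi(a+b)$.

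\textbf{Step 2 (the inequality).} For $x_2\ge x_1\ge 0$, apply Step 1 with $a=x_2-x_1\ge 0$ and $b=x_1\ge0$. This gives
$$
\Phi(x_2)\ge \Phi(x_2-x_1)+\Phi(x_1).
$$
Hence $\Phi(x_2)-\Phi(x_1)\ge\Phi(x_2-x_1)\ge 0$. Since the difference is nonnegative, it equals its absolute value, so $|\Phi(x_2)-\Phi(x_1)|\ge\Phi(x_2-x_1)$.

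The only delicate point is the role of $\Phi(0)=0$, which is needed for the convex-combination estimate in Step 1. Everything else is a two-line computation.
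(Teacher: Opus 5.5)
Your proof is correct and is essentially the paper's argument: the paper's two convexity estimates $\Phi(x_2-x_1)\le\frac{x_2-x_1}{x_2}\Phi(x_2)$ and $\Phi(x_1)\le\frac{x_1}{x_2}\Phi(x_2)$ are exactly your Step 1 with $a+b=x_2$, which you have simply packaged as superadditivity. You are also right that $\Phi(0)=0$ must be assumed: the paper drops the term $(1-\lambda)\Phi(0)$ without comment, and $\Phi\equiv 1$ is a counterexample otherwise. Your treatment of $a+b=0$ also covers the case $x_2=0$, where the paper divides by $x_2$.
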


\begin{proof}
	Put $\lambda=\frac{x_2-x_1}{x_2}$. Then $x_2-x_1=\lambda x_2$ and by the convexity assumption on $\Phi$, we have
	$$
	\begin{array}{lll}
		\Phi(x_2-x_1)&=&\displaystyle\Phi(\lambda x_2+(1-\lambda)\cdot 0)\\
		&\leq& \lambda\Phi(x_2)\\
		&=&\frac{x_2-x_1}{x_2}\Phi(x_2)\\
		&=& \Phi(x_2)-\frac{x_1}{x_2}\Phi(x_2)\\
		&\leq&\displaystyle\Phi(x_2)-\Phi\left(\frac{x_1}{x_2}x_2+\left(1-\frac{x_1}{x_2}\right)\cdot 0\right)\\
		&=&\Phi(x_2)-\Phi(x_1).
	\end{array}
	$$ 
\end{proof}

For $x\in h_\Phi^+$ we introduce the notation
$$
\tilde x=\Big(\Phi_1(x_1),...,\Phi_n(x_n),...\Big)\in \ell_1^+
$$
and denote $h_\Phi^+=\{x\in h_\Phi : x_i\ge 0, i\in \mathbb{N}\}$.

\begin{lemma}\label{lemma:norms}
For any $x\in h_\Phi$ we have\ 
$
\|\tilde x\|_1\le \|x\|_{h_\Phi}.
$
\end{lemma}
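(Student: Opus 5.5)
The plan is to compare the $\ell_1$-norm of $\tilde x$ with the modular directly. Since every $\Phi_i$ is even and nonnegative,
$$
\|\tilde x\|_1=\sum_{i=1}^\infty |\Phi_i(x_i)|=\sum_{i=1}^\infty \Phi_i(x_i)=\widetilde\Phi(x),
$$
which is finite because $x\in h_\Phi$. The statement therefore reduces to the modular--norm inequality $\widetilde\Phi(x)\le \|x\|_{h_\Phi}$, where $\|\cdot\|_{h_\Phi}$ is the Luxemburg norm restricted to $h_\Phi$. I would prove this first on the unit ball.

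\textbf{Step 1: the Luxemburg infimum is attained.} Let $x\in h_\Phi$ with $x\neq 0$ (the case $x=0$ is trivial) and put $r=\|x\|_{h_\Phi}$. Take $r_k\downarrow r$. Then $\sum_i \Phi_i(x_i/r_k)\le 1$ for every $k$. Each term increases to $\Phi_i(x_i/r)$ by continuity and monotonicity of $\Phi_i$ on $[0,\infty)$. By monotone convergence, $\widetilde\Phi(x/r)\le 1$.

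\textbf{Step 2: convexity.} Use convexity together with $\Phi_i(0)=0$, exactly as in the first line of the proof of Lemma~\ref{Lem-A}: $\Phi_i(\lambda t)\le \lambda\Phi_i(t)$ for $\lambda\in[0,1]$. If $r\le 1$, apply this with $\lambda=r$ and $t=x_i/r$, then sum over $i$. This gives
$$
\widetilde\Phi(x)\le r\,\widetilde\Phi(x/r)\le r=\|x\|_{h_\Phi}.
$$
This settles the inequality on the closed unit ball of $h_\Phi$. The unit ball, and more generally the bounded sets $S$ of Lemma~\ref{lemma:11}, is where the map $x\mapsto\tilde x$ from $h_\Phi^+$ to $\ell_1^+$ is used in the sequel.

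\textbf{Step 3: the case $\|x\|_{h_\Phi}>1$.} This is the step I expect to be the main obstacle. The unit-ball argument bounds $\widetilde\Phi(x)$ only by $r\,\widetilde\Phi(x/r)$ through the factor $\lambda=r$, and that factor exceeds one here. Convexity now works in the opposite direction: $\widetilde\Phi(x)\ge r\,\widetilde\Phi(x/r)$. A linear bound $\widetilde\Phi(x)\le \|x\|_{h_\Phi}$ cannot follow from convexity alone; for instance, $\Phi_i(t)=|t|^p$ with $p>1$ gives $\widetilde\Phi(x)=\|x\|^p$.

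My first attempt would be to check whether the norm $\|\cdot\|_{h_\Phi}$ in the statement is meant with the normalization implicit in the homeomorphism between $h_\Phi^+$ and $\ell_1^+$, i.e.\ for vectors in the unit ball, where Step 2 already gives the claim. Failing that, I would look for an additional growth hypothesis on the $\Phi_i$ that bounds the modular linearly beyond the unit ball, and I would record any such hypothesis explicitly before using the lemma for $\|x\|_{h_\Phi}>1$. In every case the core of the argument is Steps 1--2: attainment of the Luxemburg infimum and the subhomogeneity $\Phi_i(\lambda t)\le\lambda\Phi_i(t)$ for $\lambda\le1$.
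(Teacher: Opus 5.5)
Your Steps~1--2 are the paper's argument, carried out correctly. The paper also rescales by $r\ge\|x\|_{h_\Phi}$, invokes convexity, and then sets $r=\|x\|_{h_\Phi}$, tacitly using the attainment you prove in Step~1. The obstacle you flag in Step~3 is genuine, and it is exactly where the paper's proof breaks. The paper asserts $\frac1r\sum_i\Phi_i(x_i)\le\sum_i\Phi_i(x_i/r)$ ``by convexity'' for every $r\ge\|x\|_{h_\Phi}$. This is the superhomogeneity $\Phi_i(\mu t)\ge\mu\,\Phi_i(t)$ with $\mu=1/r$. Convexity and $\Phi_i(0)=0$ give it only for $\mu\ge1$, i.e.\ $r\le1$; for $r>1$ they give the reverse. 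So the paper's proof establishes precisely what your Steps~1--2 establish and nothing more.

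You should drop the hedging in Step~3, because your example is an outright counterexample to the lemma as stated. Take $\Phi_i(t)=|t|^p$ with $p>1$ (non-degenerate, $h_\Phi=\ell_\Phi=\ell_p$) and $x=2e_1$; then $\|x\|_{h_\Phi}=2$ but $\|\tilde x\|_1=2^p>2$. No reinterpretation of the norm and no growth hypothesis of the kind you contemplate can rescue it in the paper's setting. For $0\ne x\in h_\Phi$ the map $\lambda\mapsto\widetilde\Phi(\lambda x)$ is finite and convex, hence continuous on $(0,\infty)$, so $\widetilde\Phi(x/\|x\|_{h_\Phi})=1$. Convexity then gives $\|\tilde x\|_1\ge\|x\|_{h_\Phi}$ whenever $\|x\|_{h_\Phi}\ge1$. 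So the lemma could hold off the unit ball only with equality there, and testing $x=te_i$ shows this forces every $\Phi_i$ to be linear, i.e.\ $\ell_\Phi$ is a weighted $\ell_1$.

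The correct statement is the one you proved: $\|\tilde x\|_1\le\|x\|_{h_\Phi}$ for $\|x\|_{h_\Phi}\le1$. Contrary to your aside, this does not cover the bounded sets $S\subset KB_{\ell_\Phi}$ of Lemma~\ref{lemma:11} when $K>1$. It is a local statement, and that is what the paper actually uses downstream. In Corollary~\ref{cor-A} it applies to small differences $x^k-x^0$, supplemented by a tail estimate because $x\mapsto\tilde x$ is nonlinear. Theorem~\ref{theorem:17} uses only $\widetilde\Phi(\bar x)<\infty$.
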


\begin{proof}
For every $r\ge \|x\|_{h_\Phi}$ we have, by convexity of $\Phi_i, i\in \mathbb{N}$ and definition of Laxemberg norm
$$
{1\over r}\sum_{i=1}^\infty \Phi_i(x_i) \le \sum_{i=1}^\infty \Phi_i\Big ({x_i\over r}\Big ) \le 1.
$$
Hence, for  $r=\|x\|_{h_\Phi}$, we obtain
$$
\|\tilde x\|_1 := \sum_{i=1}^\infty \Phi_i(x_i) \le \|x\|_{h_\Phi}.
$$

\end{proof}

Define the mapping $T:h_\Phi^{+}\to\ell_1^{+}$ by $Tx=\widetilde{x}$.

\begin{cor}\label{cor-A}
If $\Phi$ is non-degenerate, then the mapping $T$ is a homeomorphism between  $\ell_\Phi^{+}$ and $\ell_1^{+}$.

\end{cor}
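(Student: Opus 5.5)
The plan is to verify, under the standing assumption $h_\Phi\cong\ell_\Phi$ (so $h_\Phi^+=\ell_\Phi^+$), four things about $Tx=\widetilde x=(\Phi_i(x_i))_i$: it maps into $\ell_1^+$, it is a bijection, it is continuous, and its inverse is continuous. The corollary does not restate this assumption, but the surrounding setting is $x\in h_\Phi^+$ and I will need it for surjectivity.

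\textbf{Well-definedness and bijectivity.} By Lemma~\ref{lemma:norms}, $\|Tx\|_1\le\|x\|_{h_\Phi}<\infty$, so $T$ maps into $\ell_1^+$. Each $\Phi_i$ is convex on $[0,\infty)$ with $\Phi_i(0)=0$ and $\Phi_i(t)>0$ for $t>0$ (non-degeneracy). Convexity then gives, for $0\le s<t$, $\Phi_i(s)\le\frac{s}{t}\Phi_i(t)<\Phi_i(t)$. Hence $\Phi_i$ is strictly increasing on $[0,\infty)$, and by convexity it tends to $\infty$. So it is a continuous bijection of $[0,\infty)$ onto itself, and $T$ is injective. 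For surjectivity, given $y\in\ell_1^+$, put $x_i=\Phi_i^{-1}(y_i)$. Then $\widetilde\Phi(x)=\|y\|_1<\infty$, so $x\in\ell_\Phi^+=h_\Phi^+$ and $Tx=y$. This is the only step where $h_\Phi=\ell_\Phi$ is used for surjectivity; without it the image of $h_\Phi^+$ need not be all of $\ell_1^+$.

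\textbf{Continuity of $T$.} I will use Lemma~\ref{lemma:10} with a sign-weighted modular. Fix $x,y\in KB_{\ell_\Phi}\cap\ell_\Phi^+$ and define $a_i=\operatorname{sign}\bigl(\Phi_i(x_i)-\Phi_i(y_i)\bigr)$, so that $a\in\ell_\infty$ and $\|a\|_\infty\le1$. Then
\[
\|Tx-Ty\|_1=\sum_{i=1}^\infty |\Phi_i(x_i)-\Phi_i(y_i)|=\widetilde\Phi_a(x)-\widetilde\Phi_a(y)\le \nu(K+1)\,\|x-y\|_\Phi .
\]
Thus $T$ is Lipschitz on bounded sets, and in particular continuous.

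\textbf{Continuity of $T^{-1}$.} Lemma~\ref{Lem-A}, applied coordinatewise to the ordered pair $\{x_i,y_i\}$ together with evenness of $\Phi_i$, gives $\Phi_i(|x_i-y_i|)\le|\Phi_i(x_i)-\Phi_i(y_i)|$. Summing over $i$,
\[
\widetilde\Phi(x-y)\le\|Tx-Ty\|_1 .
\]
Now suppose $Tx^{(n)}\to Tx$ in $\ell_1$. Then $\widetilde\Phi(x^{(n)}-x)\to0$. Since $\widetilde\Phi$ has a strong minimum at $0$ by Lemma~\ref{lemma:9}, this forces $\|x^{(n)}-x\|_\Phi\to0$.

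\textbf{Main obstacle.} I expect the delicate point to be this last step, because the modular controls the norm only through the strong-minimum property. The elementary convexity bound $\widetilde\Phi(z/\varepsilon)\ge\widetilde\Phi(z)/\varepsilon$ goes in the wrong direction, so Lemma~\ref{lemma:9} (equivalently, the hypothesis $h_\Phi\cong\ell_\Phi$) is essential here rather than just convenient.
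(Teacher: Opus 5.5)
Your injectivity argument and your proof that $T^{-1}$ is continuous (Lemma~\ref{Lem-A} coordinatewise, then the strong minimum from Lemma~\ref{lemma:9}) are exactly the paper's. Surjectivity and the explicit role of $h_\Phi\cong\ell_\Phi$ are points the paper leaves implicit, and you handle them correctly. The real difference is the continuity of $T$. The paper only cites Lemma~\ref{lemma:norms}, but a bound on $\|Tx\|_1$ says nothing about $\|Tx^k-Tx^0\|_1$, because $T$ is nonlinear. Your sign-weighted modular, with $a_i=\operatorname{sign}(\Phi_i(x_i)-\Phi_i(y_i))$, turns $\|Tx-Ty\|_1$ into $\widetilde\Phi_a(x)-\widetilde\Phi_a(y)$. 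This genuinely reduces the question to a Lipschitz estimate for a single functional, so in that respect your route is sounder than the paper's. (For signed $a$, Step 2 of the proof of Lemma~\ref{lemma:10} only gives the constant $2\nu(K+1)$, which is harmless.)

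The step that does not hold as written is the conclusion that $T$ is Lipschitz on bounded sets. It rests on $\nu(K+1)<\infty$, which the paper asserts without proof and which fails for general non-degenerate $\Phi$ with $h_\Phi\cong\ell_\Phi$. Take $\Phi_n(t)=t^2$ for $|t|\le 1$ and $\Phi_n(t)=1+2(|t|-1)+n(|t|-1)^2$ for $|t|>1$. Then $\ell_\Phi=h_\Phi=\ell_2$ as sets and $\|se_n\|_\Phi=s$ for all $s>0$. Yet $\widetilde\Phi(2e_n)=n+3$ and $\|T((2+s)e_n)-T(2e_n)\|_1\ge(2n+2)s$, so $\nu(2)=\infty$ and $T$ is not Lipschitz on $3B_{\ell_\Phi}$. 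The space of Section 6 behaves the same way: for $k\ge 2$, $\|2a_ke_k\|=2$ but $\widetilde M_w(2a_ke_k)=N(2a_k)/N(a_k)=4e^{1/(4a_k)}$.

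You only need continuity, and a local tail argument gives it. For $x^0\in h_\Phi^+$ and $\varepsilon\in(0,1)$, choose $N$ with $\|\sum_{i>N}x^0_ie_i\|_\Phi<\varepsilon/2$. If $\|x-x^0\|_\Phi<\varepsilon/2$, the triangle inequality and the lattice property give $\|\sum_{i>N}x_ie_i\|_\Phi<\varepsilon$. The estimate $\widetilde\Phi(z)\le\|z\|_\Phi$ for $\|z\|_\Phi\le 1$ then bounds both tails $\sum_{i>N}\Phi_i(x_i)$ and $\sum_{i>N}\Phi_i(x^0_i)$ by $\varepsilon$. The first $N$ terms of $\|Tx-Tx^0\|_1$ tend to $0$ by coordinatewise convergence and continuity of the $\Phi_i$. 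That estimate is what the proof of Lemma~\ref{lemma:norms} really shows. As stated, the lemma fails for $\|x\|_\Phi>1$ (e.g.\ $\Phi_i(t)=t^2$, $x=2e_1$), so for well-definedness of $T$ you should use finiteness of $\widetilde\Phi$ on $h_\Phi$ instead of that inequality.
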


\begin{proof} Since $\Phi$ is non-degenerate, every $\Phi_i$ is strictly monotone, implying that  $T$ is one-to-one. 

(a) Let $\ell_1\ni \tilde x^k \rightarrow \tilde x^0\in \ell_1$. Lemma \ref{Lem-A} gives
$$
\sum_{i=1}^\infty \Phi_i(x_i^k-x_i^0)\le
\sum_{i=1}^\infty |\Phi_i(x_i^k) - \Phi_i(x_i^0)| \rightarrow 0.
$$
Now Lemma \ref{lemma:9} implies $x^k-x^0 \rightarrow 0$. 

(b) Let now $\ell_\Phi\ni x^k \rightarrow x^0\in \ell_\Phi$.
Lemma \ref{lemma:norms} shows that 
$\tilde x^k \rightarrow \tilde x^0$.
\end{proof}

}
 
\subsection{Monotone Mappings}

Let $X$ and $Y$ be Banach spaces such that $Y = X^*$ or $X = Y^*$.
We remind the following notion. 

\begin{definition}\label{definition:10}
	A multivalued mapping $F: X\rightarrow 2^Y$ is called monotone  if  
	\begin{equation}\label{equation:2}
		\langle x_1-x_2,y_1-y_2 \rangle \ge 0  \ \ \ \forall x_i\in X,\ \forall  y_i\in F(x_i), \ i=1,2,
	\end{equation}
	where $\langle.,.\rangle$ are the duality brackets between $X$ and $Y$.
\end{definition}

We will need the following theorem from   \cite{Ge}. 

\begin{theorem}(\cite{Ge}, Theorem 2.2)\label{theorem:6}. 
	Suppose $X$ and $Y$ are Banach spaces such that $Y = X^*$ or $X = Y^*$,  $Y$ is separable, and $T: X\rightarrow 2^{Y}$ is a locally bounded 
	monotone operator with domain ${\rm Dom}(T)=\{x\in X: T(x)\neq \emptyset\}\neq \emptyset$.
	Then there exists a {$\sigma$-fully cone porous} set $X_0\subset {\rm Dom}(T)$ such that $T$ is single-valued {and 
		upper semi-continuous} on ${\rm Dom}(T)\setminus X_0$.
\end{theorem}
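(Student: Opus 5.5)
The plan is to reduce both conclusions, single-valuedness and upper semi-continuity, to the vanishing of a norm oscillation of $T$. Then I would decompose the exceptional set into countably many pieces, each indexed by a direction from a countable norming set, a rational level, and a gap size. On each piece, monotonicity should give a cone, opening from every point of the piece, that the piece never enters.

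\textbf{Step 1 (oscillation).} For $x\in{\rm Dom}(T)$ put
\[
\omega(x)=\lim_{r\downarrow 0}{\rm diam}\Big(\bigcup\{T(x'):x'\in B(x,r)\cap{\rm Dom}(T)\}\Big).
\]
Local boundedness makes this finite. If $\omega(x)=0$, then $T(x)$ is a singleton $\{y\}$ and $T(B(x,r))\subset B(y,\eta(r))$ with $\eta(r)\to0$. This gives norm upper semi-continuity at $x$. So it suffices to show that $X_0=\{x:\omega(x)>0\}$ is $\sigma$-fully cone porous.

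\textbf{Step 2 (countable decomposition).} Since $Y$ is separable, there is a countable set $\{v_k\}\subset S_X$ that norms $Y$. If $Y=X^*$, take a dense sequence in $S_X$; separability of $X^*$ forces $X$ to be separable. If $X=Y^*$, choose norming functionals for a dense sequence in $Y$. Fix $x$ with $\omega(x)>1/n$. Then there are points $x',x''$ arbitrarily close to $x$ and $y'\in T(x')$, $y''\in T(x'')$ with $\|y'-y''\|>1/n$. Hence for some $k$ we have $\langle v_k,y'-y''\rangle>1/(2n)$. Consequently there is a rational $q$ such that, for every $y\in T(x)$, either $\langle v_k,y\rangle\le q$ fails while $\liminf_{x'\to x}\inf_{T(x')}\langle v_k,\cdot\rangle<q-1/(8n)$, or the symmetric statement holds. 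This gives sets $A_{n,k,q}^{\pm}$. A typical one is
\[
A_{n,k,q}=\Big\{x:\ \exists\, y_x\in T(x),\ \langle v_k,y_x\rangle<q,\ \ \limsup_{x'\to x}\sup_{y'\in T(x')}\langle v_k,y'\rangle>q+\tfrac1{4n}\Big\}.
\]
A further countable split by a rational local bound $M$, namely $T(B(x,1/M))\subset MB_Y$, is also needed. By Step 1, $X_0$ is the union of all these pieces.

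\textbf{Step 3 (fully cone porous).} Fix $x\in A_{n,k,q}$ and take $c\in(0,1)$ small compared with $1/(nM)$. Consider $z=x-\lambda(v_k+w)$ with $\lambda>0$ and $\|w\|<c$, and any $y_z\in T(z)$. Monotonicity gives $\langle v_k+w,y_z-y_x\rangle\le0$. Hence
\[
\langle v_k,y_z\rangle\le q+c\|y_z\|+c\|y_x\|.
\]
I want this to contradict $z\in A_{n,k,q}$. The plan is to apply the same inequality to all points near $z$ along the cone, showing that $\sup\langle v_k,T(\cdot)\rangle$ stays below $q+1/(4n)$ near $z$. Symmetrically, for the $A^{-}$ pieces I would use the cone in direction $+v_k$.

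\textbf{Main obstacle.} The hard part is making the exclusion global, for all $\lambda>0$, which is what ``fully'' requires. The pointwise inequality controls only $T(z)$, not a neighbourhood of $z$. The bound also uses $\|y_z\|\le M$, which is available only locally. My first attempt would be to refine the pieces further: replace the limsup by an explicit quantitative witness, and build the sets so that membership of $z$ forces the existence of $y_z\in T(z)$ with $\langle v_k,y_z\rangle$ large and norm at most $M$. That turns the contradiction into a pointwise one between $x$ and $z$, valid for every $\lambda$. It also needs the roles of the ``small value at the point'' and ``large value nearby'' to be swapped so that monotonicity acts between the two points themselves. If that fails, I would settle for a cone porous decomposition, which is local with radius $1/M$, and then try to upgrade it using the countable splitting over $M$.
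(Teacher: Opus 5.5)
The paper does not prove this statement; it quotes it from \cite{Ge}, so your proposal has to stand on its own.

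\textbf{Step 3 is not the real problem.} The cone $x-C(v_k,c)$ is open, so if it contains $z$ it also contains a ball $B(z,\rho)$ with $\rho<1/M$. Suppose $z$ were in your piece $A_{n,k,q,M}$, which carries the local bound $T(B(z,1/M))\subset MB_Y$, and suppose $\|y_x\|\le M$. Then your monotonicity inequality gives $\langle v_k,y'\rangle<q+2cM$ for all $z'\in B(z,\rho)$ and $y'\in T(z')$. So the $\limsup$ at $z$ is at most $q+2cM<q+\frac1{4n}$ once $c<\frac1{8nM}$, and $z\notin A_{n,k,q,M}$. This holds for every $\lambda>0$, so each piece is fully cone porous. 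If you take the $\limsup$ over balls containing $x$, the multivalued points are covered too.

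\textbf{The genuine gap is the covering claim in Step 2.} The index $k$ depends on the pair $x',x''$, hence on the scale, and a fixed $v_k$ only detects weak oscillation. Take $X=Y=\ell_2$, $f(x)=\sup_n(|x_n|-\frac1n)^+$ and $T=\partial f$, which is monotone and bounded by $1$. Then $\partial f(0)=\{0\}$ and $\partial f(\frac2m e_m)=\{e_m\}$, so $\omega(0)\ge 1$. But every $y'\in\partial f(B(0,r))$ vanishes on the coordinates $m\le 1/r$, because perturbing them slightly does not change $f$. Hence $|\langle v,y'\rangle|\le\|(v_m)_{m>1/r}\|\to 0$ for each fixed $v$, and $0$ lies in none of your pieces.

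\textbf{This cannot be repaired for norm-to-norm upper semi-continuity}, which is what your Step 1 targets, and your fallback to cone porosity fails as well. Let $K=\{x:|x_n|\le\frac1n\ \forall n\}$, which is compact, and $K^\circ=\{x:|x_n|<\frac1n\ \forall n\}$, a $G_\delta$ in $K$. At every $x\in K^\circ$ we have $\partial f(x)=\{0\}$, while $\partial f(x+\sigma\frac2m e_m)=\{\sigma e_m\}$, where $\sigma$ is the sign of $x_m$ (or $1$ if $x_m=0$). So norm upper semi-continuity fails at every point of $K^\circ$. Yet $K^\circ$ is not $\sigma$-cone porous:
\begin{itemize}
\item Suppose $K^\circ=\bigcup_jP_j$. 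Baire's theorem gives some $j$ and an open $O$ with $\emptyset\neq O\cap K^\circ\subset\overline{P_j}$.
\item Take $x\in P_j\cap O$ with cone data $(r,v,c)$, and choose a finitely supported $u$ with $\|u-v\|<c$.
\item For small $t>0$, the point $x+tu$ lies in $K^\circ$ and in the open set $O\cap B(x,r)\cap(x+C(v,c))$. That open set is therefore a neighbourhood of a point of $\overline{P_j}$, so it meets $P_j$, contradicting cone porosity of $P_j$ at $x$.
\end{itemize}
Hence, under Definition~\ref{definition:5}, the statement can only hold for a weaker notion of upper semi-continuity.

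\textbf{What your scheme does prove.} Run Step 2 with a basic neighbourhood $\{y':|\langle v_k,y'-y\rangle|<\varepsilon,\ k\le k_0\}$ and pigeonhole over the finitely many $k\le k_0$ and the two signs. You then get that $T$ is single-valued and upper semi-continuous from the norm topology to $\sigma(Y,\{v_k\})$ (norm-to-weak$^*$ when $Y=X^*$) outside a $\sigma$-fully cone porous set. For norm-to-norm upper semi-continuity, the Preiss--Zaj\'{\i}\v{c}ek argument gives only $\sigma$-porosity. There the holes sit near nearby points, in scale-dependent directions, rather than in a cone with vertex $x$.
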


Let $f:h_\Phi^+\supset S\rightarrow \mathbb{R}$ be a lower semi-continuous, proper and bounded from below function and  $\Phi=\{\Phi_n\}_{n=1}^\infty$ be a non-degenerate Musielak-Orlicz function. 

Denote
$$
\tilde S=\Big\{\tilde x\in\ell_1^+: x\in S\Big\},
$$
and define the function 
$$
\tilde f: \tilde S\rightarrow \mathbb{R} \ \ \ \mbox{by}\ \ \  \tilde f(\tilde x)=f(x).
$$
If $f$ is lower semi-continuous on $(S,\|.\|_{h_\Phi})$, then $\tilde f$ is lower semi-continuous on $(\tilde S, \|.\|_1)$ - this follow by  Corollary  {\ref{cor-A}.

 Define the multivalued mappings $F: \ell_\infty \rightarrow 2^S$ 
and $\tilde F: \ell_\infty \rightarrow 2^{\ell_1^+}$ respectively, as 
\begin{equation}\label{eq:*}
	F(a) = {\rm argmin}_{x\in S}  \left\{\sum_{i=1}^\infty  a_i\Phi_i(x_i)+ f(x) \right\}.
\end{equation}
and
\begin{equation}\label{eq:**}
	 \tilde F(a)={\rm argmax}_{\tilde x\in \tilde S}\Big \{\langle a,\tilde x \rangle -\tilde f(\tilde x)\Big \}.
\end{equation}

{
We have 
\begin{equation}\label{F}
x\in F(a) \ \ \ \mbox {if and only if} \ \ \  \tilde x\in \tilde F(-a).
\end{equation}
It is straightforward to check that 
$\tilde{F}: \ell_\infty \rightarrow 2^{\ell_1^+}$ is a monotone mapping.

}

{

\section{Main Results}
\begin{theorem}\label{theorem:13}
Let $\Phi$ be a non-degenerated Musielak--Orlicz function, assume $h_{\Phi}\cong \ell_{\Phi}$, 
and let $f:\ell_{\Phi}\to \mathbb{R}\cup\{+\infty\}$ be a proper, lower semicontinuous, 
and bounded below function whose domain
\[
S := \{x\in \ell_{\Phi} : f(x)<\infty\}
\]
is bounded.
%


Then

(a) There exists a set $A\subset \ell_\infty $ such that $\ell_\infty\setminus A$ is $\sigma$-porous and for every $a\in A$
the minimization problem $f+\widetilde{\Phi}_{a}$ is WPMC. 

(b) If $S\subset h_{\Phi}^{+}$, then 	
there  exists a  subset $A_0\subset A$ such that $\ell_\infty\setminus A_0$ is $  \sigma$-porous and such that
  the mapping $F: \ell_\infty \rightarrow \ell_\Phi, a\mapsto {\rm argmin}_{S}(f+\tilde \Phi_a) \ 
		$  is singled-valued and upper-semicontinuous on $\ell_\infty\setminus A_0$.
		
		(c) For every $a\in A_0$, every minimizing sequence for $f+\widetilde{\Phi_a}$ is norm converging to $F(a)$. 
 	
\end{theorem}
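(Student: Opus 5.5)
For (a) I plan to combine Lemma~\ref{lemma:11} with Theorem~\ref{theorem:7}. Since $S=\operatorname{dom} f$ is bounded, and $f$ is lower semicontinuous, I replace $S$ by its closure $\overline S$; points of $\overline S\setminus S$ carry the value $+\infty$, so the minimization problems and the sublevel sets $\Omega^{\overline S}_{f+h}(t)$ are the same as on $S$. By Lemma~\ref{lemma:11}, $\mathbb P=\{\widetilde\Phi_a:a\in\ell_\infty^+\}$ with $\|\widetilde\Phi_a\|_{\mathbb P}=\|a\|_\infty$ is a perturbation space in the strong sense on $\overline S$. Theorem~\ref{theorem:7} then says the exceptional set in $\mathbb P$ is $\sigma$-porous. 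Because $\operatorname{span}\mathbb P$ is isometric to $\ell_\infty$, this gives a $\sigma$-porous subset of $\ell_\infty^+$. To pass to all of $\ell_\infty$ I would rerun the proof of Theorem~\ref{theorem:7} with $h$ in $\operatorname{span}\mathbb P$ rather than in $\mathbb P$. Nothing in Steps~1--5 uses $h\ge 0$. It uses only that $f+h$ is lsc and bounded below on the bounded set $\overline S$, which holds by Lemma~\ref{lemma:10}. The holes $g+u$ stay in $\mathbb P$, and the porosity constant $r_0K_n/(18C)$ does not depend on $h$. Hence the set of non-WPMC parameters is $\sigma$-porous in $\ell_\infty$, and $A$ is its complement.

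For (b) I would use the transfer to $\ell_1^+$. By Corollary~\ref{cor-A}, $T$ is a homeomorphism $h_\Phi^+\to\ell_1^+$. By \eqref{F}, $F(a)=T^{-1}\tilde F(-a)$, where $\tilde F$ is the monotone map \eqref{eq:**} from $\ell_\infty=\ell_1^*$ into $2^{\ell_1}$. I want to apply Theorem~\ref{theorem:6} with $X=\ell_\infty$ and $Y=\ell_1$, which is separable. Local boundedness comes from Lemma~\ref{lemma:norms}: $\|\tilde x\|_1\le\|x\|\le K$ on $S$. The domain of $\tilde F$ is nonempty, since it contains $-A$ by (a). Theorem~\ref{theorem:6} then gives a $\sigma$-fully cone porous set, hence a $\sigma$-porous set, outside of which $\tilde F$ is single-valued and usc. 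Reflecting by $a\mapsto -a$ preserves porosity, and composing with the continuous $T^{-1}$ preserves single-valuedness and usc. Set $A_0=A\setminus\bigl(-X_0\bigr)$. Its complement $(\ell_\infty\setminus A)\cup(-X_0)$ is a union of two $\sigma$-porous sets, so it is $\sigma$-porous.

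For (c), take $a\in A_0$. WPMC says every minimizing sequence $x_k$ satisfies $d(x_k,\Omega^S_{f+\widetilde\Phi_a}(0))\to0$, and by (b) the solution set is the singleton $F(a)$. Hence $x_k\to F(a)$ in norm, which is Tikhonov well-posedness.

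The main obstacle is that the statement of (b) asks for $F$ to be single-valued and usc on $\ell_\infty\setminus A_0$ with $A_0\subset A$. Read literally this is inconsistent, since (c) treats $A_0$ as the good set. I would interpret $A_0$ as the good set, as in (c), so the requirement in (b) is on $A_0$ itself. A second delicate point is checking the hypotheses of Theorem~\ref{theorem:6}. Local boundedness is uniform because $\tilde S$ is bounded. However, $\operatorname{Dom}\tilde F$ is not the whole space, and the theorem only controls behaviour on $\operatorname{Dom}\tilde F\setminus X_0$. Points outside the domain are exactly where no minimizer exists, and those already lie in $\ell_\infty\setminus A$, which is covered by part (a).
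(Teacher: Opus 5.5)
Your proposal is correct and follows the paper's proof step by step. For (a) you use Lemma~\ref{lemma:11} with Theorem~\ref{theorem:7} and the isometry ${\rm span}\,\mathbb{P}\cong\ell_\infty$. For (b) you use the monotone map $\tilde F$, Theorem~\ref{theorem:6}, intersection with $-A$, and pull-back through $T^{-1}$ (Corollary~\ref{cor-A}); your $A_0=A\setminus(-X_0)$ is exactly the paper's $-\bigl((-A)\cap A_1\bigr)$. For (c) you use WPMC with a singleton solution set, and you read $A_0$ as the good set, just as the paper's proof does.

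Your extra care fills in points the paper passes over silently: passing to $\overline S$, rerunning Theorem~\ref{theorem:7} on all of ${\rm span}\,\mathbb{P}$ rather than only on the cone, and checking the domain and local boundedness of $\tilde F$. One justification should change. Lemma~\ref{lemma:norms} is valid only for $\|x\|_\Phi\le 1$: its convexity step $\Phi_i(x_i)\le r\,\Phi_i(x_i/r)$ needs $r\le 1$, and in $\ell_2$ one has $\widetilde\Phi(x)=\|x\|^2$. So bound $\tilde S$ in $\ell_1$ by $\|\tilde x\|_1=\widetilde\Phi(x)\le\nu(K)$ on $S\subset KB_{\ell_\Phi}$, as in Lemma~\ref{lemma:10}, rather than by $\|x\|\le K$.
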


 
\begin{proof}

(a) 
By Lemma~\ref{lemma:11}, since $S$ is bounded, 
$({\rm span}\{\mathbb{P}\},\|\cdot\|_{\mathbb{P}})$ is a perturbation space in the strong sense for $S$.
Applying Theorem~\ref{theorem:7}, we obtain that the complement of the set

\[
\tilde A=\Bigl\{
   h\in {\rm span}\{\mathbb{P}\} :
   \text{the minimization problem } (f+h,S) \text{ is WPMC}
\Bigr\}
\]
 is a $\sigma$-porous subset of ${\rm span}\{\mathbb{P}\}$.

Since $({\rm span}\{\mathbb{P}\},\|\cdot\|_{\mathbb{P}})$ is isometric to 
$(\ell_\infty,\|\cdot\|_\infty)$, the isometric image in $\ell_\infty$ of $\tilde A$, denoted by $A$,  satisfies (a), i.e. $\ell_\infty \setminus A$ is $\sigma$-porous in $\ell_\infty$.

(b)  
{
Note that 
${\rm Dom}(\tilde F) \supset -A$.}

Since $\tilde F $ is a monotone operator,  Theorem~\ref{theorem:6} shows that it is single-valued and upper semicontinuous on a set
\[
A_1 \subset Dom(\tilde F),
\]
such that its complement in $Dom(\tilde F)$ 
is $\sigma$-fully cone porous. So, $\tilde F$ is single-valued and upper semicontinuous on the set 
$$
A_2:=(-A)\cap A_1, 
$$
which has a $\sigma$-porous complement in $\ell_\infty$. 
Due to Corollary \ref{cor-A}, the mapping  $F$ is single-valued and upper semicontinuous on $A_0:=-A_2$, which is also $\sigma$-porous.  

(c)  Since $F$ is also WPMC on $A_0$, the proof of (c) is completed.   

\end{proof}
  
 If the domain of $f$ is not bounded, we obtain similar result in the next theorem,
however the perturbation space is $\ell_\infty^+$.

\begin{theorem}\label{theorem:12}
Let $\Phi$ be a nondegenerate Musielak--Orlicz function and assume 
$h_\Phi \cong \ell_\Phi$.  
Let $f:\ell_\Phi^+\to\mathbb{R}\cup\{+\infty\}$ be proper, lower semicontinuous,
and bounded from below.  
For $a\in\ell_\infty^+$ and $x_0\in  \operatorname{Dom}(f)$ define
\[
F(a) := \operatorname{argmin}_{\ell_\Phi}\bigl(f+\widetilde{\Phi}_a\bigr).
\]
and
\[
P := 
\Bigl\{
a\in\ell_\infty^+ :
f+\widetilde{\Phi}_a(.-x_0) 
\text{ does not admit a strong global minimum on }\ell_\Phi^+
\Bigr\}.
\]

Then $P$ is $\sigma$--porous in $\ell_\infty^+$, and the mapping 
$F:\ell_\infty^+\setminus P\to\ell_\Phi$ is single--valued and 
upper--semicontinuous.
\end{theorem}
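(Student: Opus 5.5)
The plan is to reduce Theorem~\ref{theorem:12} to the bounded case, Theorem~\ref{theorem:13}, by exhausting $\ell_\infty^+$ with countably many pieces. On each piece the effective domain of the perturbed function can be replaced by a bounded set without changing the minimizers. Since a countable union of $\sigma$-porous sets is $\sigma$-porous, it is enough to show that $P\cap U_k$ is $\sigma$-porous for a countable cover $\{U_k\}$ of $\ell_\infty^+$.

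\textbf{Cover.} For $k\in\mathbb{N}$ set $U_k=\{a\in\ell_\infty^+ : \inf_i a_i\ge 1/k\}$; these sets increase with $k$. Parameters with $\inf_i a_i=0$ are excluded and must be handled separately; see the last paragraph.

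\textbf{Coercivity on $U_k$.} For $a\in U_k$ we have $\widetilde\Phi_a(x-x_0)\ge \frac1k\widetilde\Phi(x-x_0)$. Since $f$ is bounded below by some $m$, the level set
\[
\{x\in \ell_\Phi^+ : f(x)+\widetilde\Phi_a(x-x_0)\le f(x_0)\}
\]
lies in $\{x : \widetilde\Phi(x-x_0)\le k\,(f(x_0)-m)\}$. The key point is that this modular sublevel set is norm bounded. By Lemma~\ref{lemma:9}, $\widetilde\Phi$ has a strong minimum at $0$, so there is $t_0>0$ with $\widetilde\Phi(z)\le t_0\Rightarrow \|z\|_\Phi\le 1$. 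Convexity then gives, for $\widetilde\Phi(z)\le R$ with $R\ge t_0$,
\[
\widetilde\Phi\!\left(\frac{t_0}{R}\,z\right)\le \frac{t_0}{R}\,\widetilde\Phi(z)\le t_0,
\]
hence $\|z\|_\Phi\le R/t_0$. Consequently, with $R_k = k\,(f(x_0)-m)$, on $U_k$ we may replace $f$ by
\[
f_k = f+\iota_{S_k}, \qquad S_k=\ell_\Phi^+\cap \overline B\bigl(x_0,\,R_k/t_0+1\bigr).
\]
This $f_k$ is lower semicontinuous with bounded domain, and $\operatorname{argmin}(f+\widetilde\Phi_a(\cdot-x_0)) = \operatorname{argmin}(f_k+\widetilde\Phi_a(\cdot-x_0))$ for $a\in U_k$.

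\textbf{Applying the bounded case.} Next I would apply Theorem~\ref{theorem:13} to $f_k$ after absorbing the shift: the function $\widetilde\Phi_a(\cdot-x_0)$ is $\widetilde\Phi_a$ composed with a translation. I would re-run Lemma~\ref{lemma:11} on the bounded set $S_k$, centring the truncation at $x_0$, which is harmless since the proof only uses tails and compact finite-dimensional sections. Then Theorem~\ref{theorem:7} gives a $\sigma$-porous set of $a$ for which the problem is not WPMC. Part (b) of Theorem~\ref{theorem:13} requires $S_k\subset h_\Phi^+$, which holds since $h_\Phi=\ell_\Phi$. I would use it in the unshifted form, i.e.\ with $\widetilde\Phi_a$ itself as in the definition of $F$, where the monotone-operator argument via $T$ and Theorem~\ref{theorem:6} applies directly. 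This gives single-valuedness plus upper semicontinuity off a $\sigma$-porous set. WPMC together with single-valuedness is Tikhonov well-posedness, i.e.\ a strong global minimum.

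\textbf{Porosity bookkeeping.} Porosity of a set in $\ell_\infty$ intersected with $U_k$ should pass to porosity in $\ell_\infty^+$. The holes produced in Theorem~\ref{theorem:7} are centred at $h+g+\frac{\delta}{6C}x_0$ with $g,x_0$ in the interior of the positive cone, so they lie in $\ell_\infty^+$, and indeed in $U_k$ for small radii.

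\textbf{Main obstacle.} The hard step is the parameters with $\inf_i a_i=0$, where coercivity fails. My first attempt would be to show that $\{a\in\ell_\infty^+ : \inf_i a_i=0\}$ is itself porous in $\ell_\infty^+$: given such an $a$ and $r>0$, the ball of radius $r/3$ around $a+\frac r2\mathbf 1$ avoids it and lies in $B(a,r)$. This set would then simply be put into $P$. If the intended $P$ must contain only genuinely bad $a$, one would need a finer argument using boundedness of the level sets of $f$ itself, and I expect that is where the real difficulty lies.
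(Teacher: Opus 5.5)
\paragraph{Assessment.} For $x_0=0$ your argument is the paper's. The paper also discards the porous set $L=\{a\in\ell_\infty^+:\inf_i a_i=0\}$ with the same shifted-ball construction. It gets coercivity from the uniform lower bound $\frac1n\widetilde\Phi$, phrased as a coercive case for $f+\frac1n\widetilde\Phi$ with $B_n=A_n+\frac1n\mathbf{1}$; this is your $U_k$ decomposition in other words. It then truncates to a bounded piece of $\ell_\Phi^+$ and invokes Theorem~\ref{theorem:13}. Your remark that the holes from Theorem~\ref{theorem:7} stay in the cone is a point the paper passes over.

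\paragraph{The gap: the case $x_0\neq0$.} You obtain WPMC for $f+\widetilde\Phi_a(\cdot-x_0)$, but single-valuedness from Theorem~\ref{theorem:13}(b) for the unshifted $f+\widetilde\Phi_a$. These are different problems with different argmin sets, so ``WPMC together with single-valuedness'' gives a strong minimum for neither. Nor can part (b) be run on the shifted problem. It rests on $\widetilde\Phi_a(x)=\langle a,Tx\rangle$ with $T$ injective on $h_\Phi^+$, whereas $\widetilde\Phi_a(x-x_0)$ only sees $\Phi_i(|x_i-x_{0,i}|)$, and $x-x_0$ changes sign on $\ell_\Phi^+$.

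The gap cannot be closed, because the statement fails for $x_0\neq0$. In $\ell_2$ (so $\Phi_i(t)=t^2$), let $f(se_1)=-2|s-1|$ for $0\le s\le 2$, $f=+\infty$ elsewhere on $\ell_2^+$, and $x_0=e_1$. With $r=|s-1|\in[0,1]$ the shifted problem is $-2r+a_1r^2$. Its minimum is at $r=1$ if $a_1\le 1$ and at $r=1/a_1$ if $a_1>1$, so it is always attained at the two points $s=1\pm r$. Hence $P=\ell_\infty^+$, which is not $\sigma$-porous by Baire's theorem, and the statement must be read with $x_0=0$.

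The paper's step (B) is not a valid reduction either. It applies the case $x_0=0$ to $f(\cdot+x_0)$, but that function lives on $\ell_\Phi^+-x_0\not\subset h_\Phi^+$, where Theorem~\ref{theorem:13}(b) is unavailable.

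\paragraph{The ``main obstacle''.} This is not an obstacle for the porosity claim. Since $P\subset L\cup\bigcup_k(P\cap U_k)$ and a subset of a $\sigma$-porous set is $\sigma$-porous, you are done. The paper concludes exactly this way, declaring $L\cup\bigcup_nB_n$ the final exceptional set.

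Your hesitation is justified only for the second conclusion. You and the paper both get single-valuedness and upper semicontinuity of $F$ off this enlarged set, not off $P$ itself, and on $L\setminus P$ upper semicontinuity can fail. In $\ell_2$ take $a_i=1/i$, $t_i=\sqrt{2i}$, $f(0)=0$, $f(t_ie_i)=-1$, and $f=+\infty$ elsewhere; this $f$ is lower semicontinuous since its domain is closed and discrete. Then $f+\widetilde\Phi_a$ equals $0$ at $0$ and $1$ at every $t_ie_i$, so $a\notin P$ and $F(a)=\{0\}$. Let $b^{(n)}$ equal $a$ except $b^{(n)}_n=0$. Then $\|b^{(n)}-a\|_\infty=1/n$ and $b^{(n)}\notin P$, while $F(b^{(n)})=\{t_ne_n\}$ with $\|t_ne_n\|\to\infty$.
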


\begin{proof} (A) First assume that $x_0=0$.

\textbf{Case 1: $f$ is coercive.} So,

\[
\lim_{\|x\|_\Phi\to\infty} f(x)=+\infty.
\]

Choose $K>0$ such that
\begin{equation}\label{eq:coercive-K}
\|x\|_\Phi\ge K \quad\Longrightarrow\quad 
f(x) > f(0)+1.
\end{equation}
Set $S:=K B_{\ell_\Phi}\cap \ell_\Phi^+$.  
Since $S$ is bounded and $h_\Phi\cong\ell_\Phi$, Lemma~\ref{lemma:11} implies
that $\mathbb{P}$ is a strong perturbation space for $S$.

Define the ``bad'' set on $S$:

\[
A_S :=
\Bigl\{
a\in\ell_\infty^+ :
f+\widetilde{\Phi}_a 
\text{ does not admit a strong minimum on }S
\Bigr\}.
\]

By Theorem~\ref{theorem:13}, $A_S$ is $\sigma$--porous in $\ell_\infty^+$, and
for every $a\notin A_S$ the minimizer on $S$ is unique and depends 
upper--semicontinuously on $a$.

Fix $a\in\ell_\infty^+\setminus A_S$ and let 
$z\in S$ be the unique strong minimizer of $f+\widetilde{\Phi}_a$ on $S$.
 
If $x\not\in S$ and $x\in l_\Phi^+$, then $\|x\|>K$, and
	$$
	\begin{array}{lllr}
		f(x)+\widetilde{\Phi}_a(x)-1&\geq&f(x)-1&(\mbox{because}\ a\geq 0)\\
		&>&f(0)&(\mbox{by}\ (\ref{eq:coercive-K}))\\
		&\geq&f(z)+\widetilde{\Phi}_a(z) &(\mbox{since $\widetilde{\Phi}_a(0)=0$}).
	\end{array}
	$$
	So we proved that $z$ is a global minimum of the function $f+\widetilde{\Phi}_a$ on $\ell_\Phi^+$.
 
To see that the minimum is strong, let $\{x_n\}\subset \ell_\Phi^+$ be any minimizing sequence.
The above inequalities show that only finitely many  $x_n\not \in S$.
Hence $x_n\to z$.  
Thus $z$ is a strong global minimizer, and $F$ is single--valued and 
upper--semicontinuous on $\ell_\infty^+\setminus A_S$.

\medskip
\textbf{Case 2: $f$ is only bounded from below.}
For each $n\in\mathbb{N}$ define

\[
f_n := f + \frac{1}{n}\,\widetilde{\Phi}.
\]

Since $\widetilde{\Phi}(x)\to+\infty$ as $\|x\|_\Phi\to\infty$, each $f_n$ is
coercive.  
Applying Case~1 to $f_n$, define

\[
A_n :=
\Bigl\{
a\in\ell_\infty^+ :
f_n+\widetilde{\Phi}_a
\text{ does not admit a strong global minimum on }\ell_\Phi^+
\Bigr\}.
\]

Each $A_n$ is $\sigma$--porous in $\ell_\infty^+$.

We have the identity

\[
f_n+\widetilde{\Phi}_a
= f+\widetilde{\Phi}_{\,a+\frac{1}{n}\mathbf{1}},
\qquad \mathbf{1}=(1,1,\dots),
\]

Define

\[
B_n := A_n + \frac{1}{n}\mathbf{1}.
\]

Each $B_n$ is $\sigma$--porous.  
Moreover,
\begin{equation}\label{eq:por2-final}
b\in \ell_\infty^+ + \frac{1}{n}\mathbf{1}\ \setminus\ B_n
\quad\Longrightarrow\quad
f+\widetilde{\Phi}_b
\text{ has a strong global minimum on }\ell_\Phi^+.
\end{equation}

\medskip
\textbf{Claim.}
The set

\[
L :=
\ell_\infty^+ \setminus 
\bigcup_{n=1}^\infty 
\Bigl(\ell_\infty^+ + \tfrac{1}{n}\mathbf{1}\Bigr)
\]

is porous.

\emph{Proof.}
Let $B(x,r)\subset\ell_\infty^+$ be arbitrary.  
Define $y$ by $y_i=x_i+r/2$.  
Then $B(y,r/4)\subset B(x,r)$ and for any $z\in B(y,r/4)$,

\[
z_i\ge x_i+r/4\ge r/4.
\]

Thus $z\in \ell_\infty^+ + \frac{1}{n}\mathbf{1}$ for all $n>4/r$, so
$B(y,r/4)\cap L=\varnothing$.  
Hence $L$ is porous.  \qed

\medskip

Define the final exceptional set

\[
P := L \,\cup\, \bigcup_{n=1}^\infty B_n.
\]
Since $L$ is porous and each $B_n$ is $\sigma$--porous, the set $P$ is
$\sigma$--porous in $\ell_\infty^+$.

Now take $b\in\ell_\infty^+\setminus P$.  
Then $b\notin L$, so $b\in \ell_\infty^+ + \frac{1}{n_0}\mathbf{1}$ for some
$n_0$.  
Since $b\notin B_{n_0}$, property \eqref{eq:por2-final} implies that
$f+\widetilde{\Phi}_b$ has a strong global minimum on $\ell_\Phi$, and the
minimizer is unique and depends upper--semicontinuously on $b$.
 
(B) $x_0\neq 0$.

We apply case (A) for the function $g(x):=f(x+x_0)$ and finish the proof .
\end{proof}

The following theorem is similar to Theorem 4.4 from \cite{Topalova-Zlateva},  the proof is similar, using Lemma 10 and Theorem 6, and is omitted. 

\begin{theorem}\label{theorem 17}
Let $\Phi$ be a non-degenerate Musielak-Orlicz function, $\ell_\Phi=h_\Phi$, and let 
\(f:\ell_M \to \mathbb{R}\cup\{+\infty\}\) be a proper, lower semicontinuous, bounded below function whose domain is bounded.

Then for any \(0<\delta<\varepsilon\) there exists \(a=(a_n)\in \ell_\infty\) such that
\begin{equation}\label{eq:a-bounds}
\delta \le a_n \le \varepsilon,\qquad \forall n\in\mathbb{N},
\end{equation}
and the function \(f-\widetilde \Phi_a\) attains its minimum on \(\ell_M\).
\end{theorem}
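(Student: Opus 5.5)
The plan is to reduce the statement to the perturbation machinery of Section~3, applied to the function $g:=f-\widetilde\Phi_{\varepsilon\mathbf 1}$, where $\mathbf 1=(1,1,\dots)$. We then show that $f-\widetilde\Phi_a$ attains its minimum for $a=\varepsilon\mathbf 1-b$, with $b\in\ell_\infty^+$ small.

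Let $S$ be the bounded domain of $f$ and fix $K$ with $S\subset KB_{\ell_\Phi}$. By Lemma~\ref{lemma:10}, $\widetilde\Phi_{\varepsilon\mathbf 1}$ is bounded (by $\varepsilon\nu(K)$) and Lipschitz on $KB_{\ell_\Phi}$, hence continuous. Therefore $g$ is proper, lower semicontinuous and bounded below on $S$ (which we may replace by its closure without changing anything, since $f=+\infty$ off $S$). For $b\in\ell_\infty$ we have the identity
\[
f-\widetilde\Phi_a=g+\widetilde\Phi_b,\qquad a=\varepsilon\mathbf 1-b .
\]
Hence it suffices to find $b$ with $0\le b_n\le\varepsilon-\delta$ for all $n$ such that $(g+\widetilde\Phi_b,S)$ is WPMC; in particular its minimum is then attained.

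By Lemma~\ref{lemma:11}, $\mathbb P=\{\widetilde\Phi_b:b\in\ell_\infty^+\}$ is a perturbation space in the strong sense on $\overline S$. By Theorem~\ref{theorem:7}, the set of $b\in\ell_\infty^+$ for which $(g+\widetilde\Phi_b,\overline S)$ fails to be WPMC is $\sigma$-porous in $\ell_\infty^+$. Alternatively, Theorem~\ref{theorem:2} directly gives some $b\in\ell_\infty^+$ with $\|b\|_\infty<\varepsilon-\delta$ such that $(g+\widetilde\Phi_b,\overline S)$ is WPMC. This $b$ satisfies $0\le b_n<\varepsilon-\delta$, so $a_n=\varepsilon-b_n\in(\delta,\varepsilon]$, which is the bound \eqref{eq:a-bounds}. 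The minimum of $f-\widetilde\Phi_a$ over $\overline S$ is attained at a point where $f<\infty$, because $f$ is proper and $\widetilde\Phi_a$ is finite there. Since $f=+\infty$ outside $S$, this is a minimum over all of $\ell_\Phi$ (the statement's $\ell_M$ being read as $\ell_\Phi$).

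The point needing care is the domain of the perturbation space. Theorem~\ref{theorem:2} and Theorem~\ref{theorem:7} are stated for $S$ closed, and Lemma~\ref{lemma:11} needs $S$ closed and bounded. So we work on $\overline S\subset KB_{\ell_\Phi}$ and check that the lower semicontinuity of $g$ on $\overline S$ and its boundedness below survive. This uses only the continuity of $\widetilde\Phi_{\varepsilon\mathbf 1}$ from Lemma~\ref{lemma:10}, which rests on the hypothesis $h_\Phi=\ell_\Phi$ (finiteness of $\nu$).

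If one instead wants the monotone-operator form mentioned before the statement, one can use the map $\tilde F$ together with Theorem~\ref{theorem:6}: the set where $\tilde F$ is nonempty contains the complement of a $\sigma$-porous set, and therefore meets the open order interval $\{a:\delta<a_n<\varepsilon\}$. The direct route through Theorem~\ref{theorem:2} seems simplest, and I expect no further obstacles.
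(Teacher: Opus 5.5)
Your reduction is the Topalova--Zlateva argument the paper points to when it omits the proof. You put $g=f-\widetilde\Phi_{\varepsilon\mathbf{1}}$, perturb by $\widetilde\Phi_b$ with $b\in\ell_\infty^+$ and $\|b\|_\infty<\varepsilon-\delta$, and take $a=\varepsilon\mathbf{1}-b$. Granted Lemmas~\ref{lemma:10} and~\ref{lemma:11} as stated, the rest of your bookkeeping is correct: passing to $\overline S$, invoking Theorem~\ref{theorem:2}, and locating the minimizer in $\operatorname{dom} f$.

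The step that fails is the one you call harmless. Continuity of $\widetilde\Phi$ does follow from $h_\Phi=\ell_\Phi$, since $\widetilde\Phi$ is convex, lower semicontinuous and finite on a Banach space. But you need more: an upper bound for $\widetilde\Phi$ on $\overline S$. It is needed to make $g$ bounded below there, and to get the domination $\sup_S|\widetilde\Phi_b|\le C\|b\|_\infty$ required in \ref{lab:d2-2}. Continuity gives no such bound on a bounded set in infinite dimensions. The remark before Lemma~\ref{lemma:10} that $h_\Phi=\ell_\Phi$ forces $\nu(t)<\infty$ is unproved, and it is false for genuine Musielak--Orlicz functions: the $\delta_2$ condition controls $\Phi_n$ only on $[0,\Phi_n^{-1}(\beta)]$, and $\beta$ may be small.

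Concretely, let $\Phi_n(t)=t^2$ for $|t|\le 1$ and $\Phi_n(t)=1+(n+1)(|t|-1)$ for $|t|\ge 1$.
\begin{itemize}
\item Since $\Phi_n(2t)=4\Phi_n(t)$ for $|t|\le\frac12$, we have $\ell_\Phi=h_\Phi$ (indeed $=\ell_2$ as sets).
\item Yet $\|2e_n\|_\Phi=2$ while $\widetilde\Phi(2e_n)=n+2$.
\item Let $S=\{0\}\cup\{2e_n:n\in\mathbb N\}$. It is bounded, and closed because its points are at mutual distance at least $2$.
\item Let $f$ equal $0$ on $S$ and $+\infty$ elsewhere. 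Then $f$ is proper, lower semicontinuous and bounded below, with bounded domain.
\item For every $a$ with $a_n\ge\delta$ we get $(f-\widetilde\Phi_a)(2e_n)=-a_n(n+2)\le-\delta(n+2)\to-\infty$, so no minimum is attained.
\end{itemize}
So the statement is false as written, and the paper's omitted proof, which rests on the same lemmas, has the same defect. Your argument becomes a complete proof once you add the hypothesis $\sup_{\operatorname{dom} f}\widetilde\Phi<\infty$. This holds automatically in the Orlicz case $\Phi_n\equiv M\in\Delta_2$, which is the setting suggested by the leftover $\ell_M$ in the statement. It also holds for Nakano spaces with $\sup_n p_n<\infty$.

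Finally, drop the aside about $\tilde F$. Theorem~\ref{theorem:6} gives single-valuedness on part of ${\rm Dom}(\tilde F)$, not nonemptiness. Also, $\tilde F$ is only defined when $S\subset h_\Phi^+$.
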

} 
}   }
 
\section {Applications}

\subsection {The Stegall Variational Principle in Musielak-Orlicz Sequence Spaces}

\begin{theorem}\label{thm:generic-strong-min}
Let $X=\ell_{\Phi}$ be a Musielak--Orlicz sequence space, where $\Phi$ is non-degenerate and 
$h_{\Phi}\cong \ell_{\Phi}$. 
 Let $f:X\to\mathbb{R}\cup\{+\infty\}$ be proper, lower semicontinuous, bounded below, and with bounded domain
\[
S:=\{x\in X : f(x)<\infty\}.
\]

Then there exists a dense $G_\delta$ subset $X_0^*\subset X^*$ such that for every $x^*\in X_0^*$, 
the function $f-x^*$ attains a strong minimum on $X$.
\end{theorem}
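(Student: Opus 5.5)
The plan is to avoid reproving Stegall's principle from scratch and instead to show that $\ell_\Phi$ has the Radon--Nikodym property; the conclusion then follows from Stegall's theorem (see \cite{Phelps}). That theorem says: for a Banach space $X$ with the Radon--Nikodym property and a proper lower semicontinuous $f$ that is bounded below on its bounded domain, the set of $x^*\in X^*$ for which $f-x^*$ attains a strong minimum contains a dense $G_\delta$ subset of $X^*$. The hypotheses on $f$ in the statement match these exactly.

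So the work reduces to showing that $\ell_\Phi$ has the RNP when $h_\Phi\cong\ell_\Phi$. This follows from the structural facts recalled in the preliminaries. Under $h_\Phi\cong\ell_\Phi$, the unit vectors (after the normalization $v_i=a_ie_i$) form a boundedly complete basis of $h_\Phi=\ell_\Phi$. A Banach space with a boundedly complete basis is isomorphic to a dual space, namely the dual of the closed span of the biorthogonal functionals (James' theorem, \cite{Lindenstrauss-Tzafriri}). I would then check that this predual is separable: it is the closed span of a countable set of functionals. A separable dual space has the RNP (Dunford--Pettis). The RNP is an isomorphic invariant and passes to closed subspaces, so $\ell_\Phi$ has the RNP.

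As an alternative, more in the spirit of the paper, I would try to use Theorem~\ref{theorem:13}. The perturbations $\widetilde\Phi_a$ are not linear, but on $h_\Phi^+$ the map $T$ of Corollary~\ref{cor-A} turns them into linear functionals $\langle a,\tilde x\rangle$ on $\ell_1^+$. In principle, WPMC with singleton argmin for $a\in A_0$ therefore gives strong minima for linear perturbations of $\tilde f$ on $\tilde S\subset\ell_1$. Transporting back only yields perturbations of the form $\widetilde\Phi_a$, not $x^*\in\ell_\Phi^*$. For this reason I would keep this route only as motivation and rely on the RNP argument above.

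The main obstacle is the step "boundedly complete basis implies RNP". One has to confirm that the normalization $\phi_i(t)=\Phi_i(a_it)/\Phi_i(a_i)$ preserves $h=\ell$, so that the basis is genuinely boundedly complete in the whole of $\ell_\Phi$. This holds because the isometry between $\ell_\Phi$ and $\ell_\phi$ maps $h_\Phi$ onto $h_\phi$. One also has to handle the possibly degenerate case, which is excluded here by assumption. Finally, I would quote Stegall's theorem in the form with the $G_\delta$ set open and dense in $X^*$, applied to $f$ restricted to its bounded closed domain.
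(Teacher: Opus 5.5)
Your argument is correct, but it runs in the opposite logical direction from the paper. The paper derives Stegall's principle from its own modular perturbation principle, and only afterwards obtains the RNP (Corollary~\ref{corollary:15}) via Lassonde's characterization.

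Concretely, the paper takes $b\ge 0$ of small norm with $-b$ in the good set $A$ of Theorem~\ref{theorem:13}, so that $f-\widetilde\Phi_b$ is WPMC with some minimizer $x_0$. It then picks $x^*\in\partial\widetilde\Phi_b(x_0)$. This gives $(f-x^*)(x)-(f-x^*)(x_0)\ge(f-\widetilde\Phi_b)(x)-(f-\widetilde\Phi_b)(x_0)\ge 0$, so $f-x^*$ is again WPMC. Moreover $\|x^*\|$ is small by the Lipschitz bound of Lemma~\ref{lemma:10}. This subgradient linearization is exactly the step your second paragraph regarded as unavailable: it turns a small modular perturbation into a small linear one. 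Applying it to $f-\xi^*$ gives density. Openness of the sets $X_n^*$ (via Proposition~\ref{proposition:1}) and Baire then give WPMC on a dense $G_\delta$. Finally, Ge's theorem (Theorem~\ref{theorem:6}) for the monotone map $x^*\mapsto\operatorname{argmin}_S(f-x^*)$, using separability of $\ell_\Phi=h_\Phi$, adds single-valuedness, hence strong minima.

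Your route (bounded completeness, James' theorem, Dunford--Pettis, classical Stegall) is shorter and uses only standard theorems. It also shows that the RNP part of Corollary~\ref{corollary:15} is classical and independent of the paper's machinery. The paper's route is self-contained relative to Theorem~\ref{theorem:13} and is meant to showcase it.

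Two minor points:
\begin{enumerate}
\item The normalization $v_i=a_ie_i$ plays no role, since bounded completeness is unaffected by rescaling the basis vectors. It is a one-line check here: if $\sup_n\|\sum_{i\le n}x_ie_i\|_\Phi\le R$, then $\widetilde\Phi(x/R)\le 1$, so $x\in\ell_\Phi=h_\Phi$ and the expansion converges in norm.
\item $S$ need not be closed. So apply Stegall's theorem in the form for proper lower semicontinuous functions with bounded domain, as stated in your first paragraph (or on $\overline{\mathrm{co}}\,S$ with $f=+\infty$ off $S$), not on ``its bounded closed domain''.
\end{enumerate}
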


{

\begin{proof}
Fix $\varepsilon>0$.  
Choose

\[
a\in A\cap B\left(0,\frac{\varepsilon}{\nu (K+1)}\right),
\]
where $K>0$ is such that $Dom(f)\subset KB_{h_\Phi}$ and $\ell_\infty \setminus A$ is a $\sigma$-porous exceptional set from
Theorem ~\ref{theorem:13}. Then $\|a\|\nu(K+1)< \varepsilon$.
  
By Theorem~\ref{theorem:13}, the 
minimization problem 
$$
  \mbox {minimize }\ \ f+\widetilde{\Phi}_a \ \ \ \mbox{over }\ S
$$
 is WPMC for every $a\in A$. Take any $a\in A$ and  $x_0\in \mbox{ argmin} f+\widetilde{\Phi}_a$.
 
 Since $f+\widetilde{\Phi}_a =f-\widetilde{\Phi}_{-a}$ 
and since $\widetilde{\Phi}_{-a}$ is convex, $\partial\widetilde{\Phi}_{-a}(x_0)\neq\emptyset$. Pick $x^*\in\partial\widetilde{\Phi}_{-a}(x_0)$.  
Then for all $x\in X$,
\begin{equation}\label{eq:subdiff-ineq}
   f(x)-f(x_0)
   \ge \widetilde{\Phi}_{-a}(x)-\widetilde{\Phi}_{-a}(x_0)
   \ge \langle x^*,x-x_0\rangle .
\end{equation}
which implies

\begin{equation}\label{24}
f(x)-f(x_0) - \langle x^*,x-x_0\rangle \ge f(x)-f(x_0) - 
( \widetilde{\Phi}_{-a}(x)-\widetilde{\Phi}_{-a}(x_0)) \ge 0.
\end{equation}
This shows that $x_0$ is a minimum point also for $f-x^*$.

Let $(x_n)$ be a minimizing sequence for $f-x^*$.  
From \eqref{24}, $(x_n)$ is also minimizing for $f+\widetilde{\Phi}_a$.  
The property WPMC for $f+\widetilde{\Phi}_a$ implies 
$$
dist(x_n,argmin_S(f+\widetilde{\Phi}_a)\to 0.
$$
Therefore 
$$
argmin_S(f+\widetilde{\Phi}_a)=argmin_S(f+x^*)
$$
and $f+x^*$ also has the property WPMC.

By \ref{eq:subdiff-ineq} and Lemma \ref{lemma:10}	

\[
\langle x^*,x-x_0\rangle \le |\tilde \Phi_a(x)-\tilde \Phi_a(x_0)| 
   \le \|a\|_\infty \nu(K+1)\,\|x-x_0\|_{\ell_\Phi}.
\]
which implies
 
\[
\|x^*\|\le \|a\|_\infty \nu(K+1) < \varepsilon.
\]

To obtain density in $X^*$, let $\xi^*\in X^*$ and $\varepsilon>0$.  
Apply the above argument to $f-\xi^*$ to obtain $y^*$ with $\|y^*\|<\varepsilon$ such that 
$f-\xi^*-y^*$ has the property WPMC.  
Set $x^*:=\xi^*+y^*$; then $\|x^*-\xi^*\|<\varepsilon$ and $f-x^*$ has the property WPMC.

For $n\in\mathbb{N}$ define

\[
X_n^*
=
\left\{
x^*\in X^*:\ \exists t>0\ \text{such that}\ 
\alpha	\big(\Omega^S_{f-x^*}(t)\big)<\frac{1}{n}
\right\}.
\]
Let $\xi^*\in X_n^*$ and $\beta>0$ be such that 
$$
\alpha	\big(\Omega^S_{f-\xi^*}(3\beta)\big)<\frac{1}{n}
$$
For any $y^*\in X^*$ with $\|y^*\|<\beta/diam(S)$ it follows that, for any $x, y\in S$,
$$
\langle y^*,x-y\rangle < \|y^*\|diam (S) < \beta
$$
so $\Omega^S_{y^*}(\beta)=S$. Applying Proposition \ref{proposition:1}, we obtain
$$
\Omega^S_{f-\xi^*-y^*}(\beta) \subset \Omega^S_{f-\xi^*}(3\beta)
$$
which shows that $\alpha(\Omega^S_{f-\xi^*-y^*}(\beta)) < 1/n$, therefore $\xi^*+y^*\in X_n^*$. So $X^*_n$ is open.

Each $X_n^*$ is open and dense, hence by the Baire category theorem,
\[
G^*:=\bigcap_{n=1}^\infty X_n^*
\]
is a dense $G_\delta$ subset of $X^*$, and for every $Gx^*\in G^*$ the minimization problem $f-x^*$ over $S$ is WPMC.

Now, since $\ell_\Phi=h_\Phi$, this space is separable and  by Theorem 16 applied again we obtain another dense $G_\delta$ subset $X^*_0\subset G^*$ such that $f-x^*$ attains its strong minimum on $S$ for every $x^*\in X^*_0$ and the multivalued mapping 
$$
G^*\ni x^* \rightarrow argmin_S (f-x^*)
$$
 is single valued and upper semi-continuous on $X^*_0$. 
\end{proof}

}

\begin{cor} \label{corollary:15}
	Let $\ell_{\Phi}$ be a Musielak-Orlicz sequence space, equipped with the Luxemburg's norm,  $\Phi$ be a non-degenerated Musielak-Orlicz function, $h_\Phi\cong \ell_\Phi$, then $\ell_\Phi$ is a dentable space, has the Radon-Nikodym property and $\ell_\Phi^*$ is a $w^*$-Asplund space.
\end{cor}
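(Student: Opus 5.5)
The plan is to derive Corollary~\ref{corollary:15} from Theorem~\ref{thm:generic-strong-min} by invoking the classical characterizations of the Radon--Nikodym property through Stegall's variational principle. We recall the standard equivalence (Stegall; see also Phelps and Deville--Godefroy--Zizler): for a Banach space $X$ the following are equivalent:
\begin{enumerate}
\item $X$ has the RNP;
\item every nonempty closed bounded convex subset of $X$ is dentable;
\item for every closed bounded nonempty $S\subset X$ and every proper lower semicontinuous $f$ bounded below on $S$, the set of $x^*\in X^*$ for which $f-x^*$ attains a strong minimum on $S$ is a dense $G_\delta$ subset of $X^*$.
\end{enumerate}
So the main step is to observe that Theorem~\ref{thm:generic-strong-min} delivers exactly the third condition for $X=\ell_\Phi$.

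Concretely, take a closed bounded $S\subset\ell_\Phi$ and $f$ lower semicontinuous and bounded below on $S$. Extend $f$ by $+\infty$ off $S$; the result is proper, lower semicontinuous on $\ell_\Phi$, bounded below, and has bounded domain. Theorem~\ref{thm:generic-strong-min} then gives a dense $G_\delta$ set $X_0^*\subset\ell_\Phi^*$ on which $f-x^*$ has a strong minimum. For dentability it suffices to take $f=0$ on a closed bounded convex $C$ (the indicator of $C$). A strongly exposing functional $x^*$ produces a point $x_0$ such that every maximizing sequence for $x^*$ on $C$ converges to $x_0$. Hence for each $\varepsilon>0$ there is $t>0$ with
\[
\{x\in C:\langle x^*,x\rangle>\sup_C x^*-t\}\subset B(x_0,\varepsilon),
\]
a slice of diameter at most $2\varepsilon$. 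This gives dentability of every bounded closed convex set, hence the RNP. Strictly, we apply the result to $-x^*$, or equivalently use $f+x^*$, since $X_0^*$ is symmetric in the sense that density and $G_\delta$ are preserved under $x^*\mapsto -x^*$.

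For the last assertion we use the duality theorem of Collier: $X^*$ is $w^*$-Asplund if and only if $X$ has the RNP. Combined with the previous step this gives that $\ell_\Phi^*$ is $w^*$-Asplund.

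The main obstacle is the bookkeeping in Theorem~\ref{thm:generic-strong-min} regarding the sign convention ($f-x^*$ versus $f+x^*$) and the word ``strong'' minimum. We need the minimizer to be unique with every minimizing sequence converging to it, not merely WPMC. If the theorem's conclusion only secures WPMC generically, the fallback is to use WPMC alone. A WPMC minimum of a linear functional on $C$ yields slices with Kuratowski index $\alpha$ arbitrarily small. By the Kuratowski-index characterization of dentability (sets admitting slices of small measure of noncompactness, together with the separability of $\ell_\Phi=h_\Phi$), one then extracts slices of small diameter. I expect this verification, rather than the formal deduction, to require the most care.
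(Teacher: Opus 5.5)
Your proposal is correct and follows the paper's route. The paper combines Theorem~\ref{thm:generic-strong-min} (Stegall's variational principle in $\ell_\Phi$) with Lassonde's Theorem~5, which packages the equivalences Stegall's principle $\Leftrightarrow$ dentability $\Leftrightarrow$ RNP $\Leftrightarrow$ $X^*$ is $w^*$-Asplund; you instead unpack the dentability step through strongly exposing functionals and small slices, and cite Collier for the dual, and since Theorem~\ref{thm:generic-strong-min} as stated already gives strong minima, your WPMC fallback is not needed.
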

\begin{proof}
	The assertions follow from Theorem 5 in \cite{Lassonde}, where the reader can find also definitions of the involved notions.
\end{proof}

{Let us comment the well known fact that if a Banach space is reflexive, then it satisfies the Radon-Nikodym property \cite{Diestel1976RMJ}. Thus, any Musielak-Orlicz sequence space $\ell_{\Phi}$ has the Radon-Nikodym property, provided that, both, the generating function $\{\Phi_n\}_{n=1}^\infty$ and its conjugate one $\{\Phi_n^{*}\}_{n=1}^\infty$ satisfy the $\delta_2$ condition at zero. Recently, it was proven that for Orlicz sequence spaces $\ell_M$ (and Orlicz function spaces too) the assumption that $M\in\Delta_2$ at zero is necessary and sufficient for the space $\ell_M$ to be Radon-Nikodym \cite{KAMINSKA2020848}. It turns out, that all ''nice`` geometric properties in the Orlicz sequence spaces that are exposed in the presence of the $\Delta_2$ condition are also valid for the Musielak-Orlicz sequence spaces, if the $\delta_2$ holds. It is known that if there is an isomorphic copy of $c_0$ in $X$, then $X$ does not satisfy the Radon-Nikodym property. From \cite{Alherk1994} it follows that 
a Musielak-Orlicz sequence space $\ell_{\Phi}$ has an isomorphic copy of $c_0$ if and only if $\Phi$ does not satisfy the $\delta_2$ condition ate zero. Combining the result from \cite{Alherk1994} and Corollary \ref{corollary:15} we get a necessary and sufficient condition for the Radon-Nikodym property in Musielak-Orlicz sequence spaces is the validity of $\delta_2$ condition at zero.}

\begin{theorem}\label{theorem:17}
	Let $\Phi$ be a Musielak-Orlicz function and $1< \alpha_\Phi<2$. 
	
	Then
	\begin{enumerate}
		\item there is an equivalent Fr\'{e}chet differentiable norm, and therefore, $\ell_\Phi$ is an Asplund space 
		\item {If, moreover,  $h_\Phi\cong\ell_\Phi$,  and there are a sequence of naturals
			$\{k_i\}_{n=1}^\infty$ and a sequence $\{t_n\}_{m=1}^\infty$, convergent to zero, so that for some $p\in (1,2]$ and  every $i$ and $n$, 
			 \[ \displaystyle
  \frac{\Phi_{k_i}(t_n)}{t_n^p}>n, \mbox {and} \ \ 
  C:=\sup_{i\in \mathbb{N}}\Big [\Phi_i^{-1}(1)\Big ]^{-1} < +\infty, 
\] 
  then there is no nontrivial Fr\'{e}chet differentiable bump function on $\ell_\phi$, $b:\ell_\Phi\to\mathbb{R}$ with an estimate
		$$
		b(x+h)=b(x)+b^{\prime}(x)(h)+O(\|h\|^p).
		$$ 		
		}
\end{enumerate}

\end{theorem}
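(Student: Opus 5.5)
Part 1 follows directly from earlier results. Since $\alpha_\Phi>1$, we have $E(\alpha_\Phi)\ge 1$. Lemma \ref{lemma:3} gives the equivalent Musielak--Orlicz function $\Psi$, with $\ell_\Psi\cong\ell_\Phi$ and $\alpha_\Psi=\alpha_\Phi$. Theorem \ref{theorem:4} then provides an equivalent (at least once) Fr\'echet differentiable norm on $h_\Psi$. To transfer this to $\ell_\Phi$, I would either appeal to Corollary \ref{corollary:5} directly, which already gives that $\ell_\Psi$ (hence $\ell_\Phi$) is Asplund, or note that under $1<\alpha_\Phi$ the relevant norm extends. In either case the existence of a Fr\'echet smooth bump yields the Asplund property (\cite{Deville-Godefroy-Zizler}, p. 58). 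I expect no difficulty in this part beyond bookkeeping.

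For part 2, I would argue by contradiction using the Stegall principle, Theorem \ref{thm:generic-strong-min}, in the style of \cite{Deville-Godefroy-Zizler}. Suppose $b$ is a nontrivial bump with $b(x+h)=b(x)+b'(x)(h)+O(\|h\|^p)$. After translating and rescaling, assume $b(0)>0$ and that $b$ has bounded support. Set $f=1/b^2$ on $\{b\ne0\}$ and $f=+\infty$ elsewhere. Then $f$ is proper, lower semicontinuous, bounded below, has bounded domain, and near its points of finiteness it satisfies the same expansion $f(x+h)\le f(x)+f'(x)h+L\|h\|^p$ locally. Theorem \ref{thm:generic-strong-min} gives $x^*$ such that $f-x^*$ attains a strong minimum at some $x_0$. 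At $x_0$ differentiability forces $x^*=f'(x_0)$, and the minimality combined with the $O(\|h\|^p)$ remainder yields the key inequality
\[
0\le f(x_0+h)-f(x_0)-x^*(h)\le L\|h\|^p,\qquad \|h\|\le r_0 .
\]

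The main step, and the expected obstacle, is to contradict strong minimality along a suitable sequence $h_n$. I would take $h_n=t_n e_{k_{i_n}}$ with $i_n\to\infty$, chosen so that $x^*(e_{k_{i_n}})\to0$ and $\|x_0-P_{i_n}x_0\|$-type tail effects vanish. Since $h_\Phi=\ell_\Phi$, the basis is boundedly complete and $x^*(e_k)\to0$ along a subsequence. Then $f(x_0+h_n)-f(x_0)-x^*(h_n)\to0$, so $x_0+h_n$ is a minimizing sequence for $f-x^*$. Strong minimality then forces $\|h_n\|\to0$, and in fact the sequence must converge to $x_0$.

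The contradiction needs to come from a lower bound on $\|h_n\|$ relative to the remainder, which is where the hypotheses $\Phi_{k_i}(t_n)/t_n^p>n$ and $C=\sup_i[\Phi_i^{-1}(1)]^{-1}<\infty$ enter. The norm is $\|t e_k\|_\Phi=t/\Phi_k^{-1}(1)\le Ct$. I would choose instead $h_n=s_n e_{k_{i_n}}$, where $s_n$ is scaled so that $\|h_n\|$ is fixed at a small value $\rho$ while the remainder bound $L\|h_n\|^p$ can be compared with the modular growth $\Phi_{k_i}(t_n)\gg t_n^p$. This shows that the perturbed values stay close to the minimum, giving a minimizing sequence with $\|h_n\|=\rho\not\to0$, which contradicts the strong minimum.

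Getting the exact scaling right, so that the $p$-remainder is $o(1)$ while the norm stays bounded away from $0$, is the delicate point. I would try first to use the modular $\widetilde\Phi$ perturbation from Theorem \ref{theorem:13} in place of a linear $x^*$. The growth $\Phi_{k_i}(t_n)>nt_n^p$ gives a cost in the perturbed functional that dominates $L\|h\|^p$, so the two bounds can be played against each other.
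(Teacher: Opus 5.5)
\textbf{Part 1.} This is what the paper relies on (Lemma \ref{lemma:3}, Theorem \ref{theorem:4}, Corollary \ref{corollary:5}). Drop the alternative that ``the relevant norm extends'': there is no argument for it, and Theorem \ref{theorem:4} asserts a smooth norm on all of $\ell_\Phi$ only when $\beta_\Phi<\infty$.

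\textbf{Part 2 has a genuine gap.} Applying Stegall's principle (Theorem \ref{thm:generic-strong-min}) to $f=b^{-2}$ itself cannot produce a contradiction. What you extract is $0\le f(x_0+h)-f(x_0)-x^*(h)\le L\|h\|^p$ for small $h$, plus strong minimality. Exactly the same information is available for a $C^2$ bump on $\ell_2$ with $p=2$, where Stegall's principle also holds. So no argument built only from it can rule out the bump.

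Concretely, your two attempts fail as follows:
\begin{itemize}
\item For $h_n=t_ne_k$ you already have $\|h_n\|\le Ct_n\to 0$, so ``strong minimality forces $\|h_n\|\to 0$'' says nothing.
\item After rescaling to $\|h_n\|=\rho$, the only bound is $f(x_0+h_n)-f(x_0)-x^*(h_n)\le L\rho^p$. This does not tend to $0$, so $x_0+h_n$ is not a minimizing sequence.
\end{itemize}
The telltale sign is that the hypothesis $\Phi_{k_i}(t_n)>nt_n^p$ never enters. A linear perturbation has zero second difference and cannot carry the growth of the modular, so this is not a matter of getting the scaling right.

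\textbf{The missing idea.} The paper's proof produces a point where $f$ \emph{minus} a convex modular, with coefficients bounded away from $0$, attains its minimum. It invokes the modular variational principle stated at the end of Section 4 to get $a$ with $a_n\ge\delta>0$ and $\bar x$ minimizing $f-\widetilde{\Phi}_a$. Adding the inequalities at $\bar x\pm h$ gives
\[
f(\bar x+h)+f(\bar x-h)-2f(\bar x)\ge \widetilde{\Phi}_a(\bar x+h)+\widetilde{\Phi}_a(\bar x-h)-2\widetilde{\Phi}_a(\bar x).
\]
Take $h=t_ne_k$ with $k=k_i$ so large that $\Phi_k(|\bar x_k|)<t_n^p/n$; this is possible since $\widetilde{\Phi}(\bar x)<\infty$. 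Using
\[
\Phi_k(|\bar x_k+t|)+\Phi_k(|\bar x_k-t|)\ge 2\Phi_k(t)
\quad\text{and}\quad
\|t_ne_k\|_\Phi=t_n/\Phi_k^{-1}(1)\le Ct_n,
\]
the right-hand side divided by $\|h\|^p$ is at least $2\delta C^{-p}(n-1/n)\to\infty$. The left-hand side is $O(\|h\|^p)$, which gives the contradiction.

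Your fallback, using the modular from Theorem \ref{theorem:13} ``as a cost'', has the wrong sign. If $f+\widetilde{\Phi}_a$ with $a\ge 0$ is minimal at $\bar x$, you only learn that the second difference of $f$ is bounded below by a nonpositive quantity. You would need $-a_n\ge\delta$ for all $n$, i.e.\ you must subtract the modular.

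\textbf{A repair that keeps Stegall.} Apply Theorem \ref{thm:generic-strong-min} to $f-\widetilde{\Phi}$ rather than to $f$. This function is still proper, lower semicontinuous and bounded below with bounded domain, since $\widetilde{\Phi}$ is continuous and bounded on bounded sets by Lemma \ref{lemma:10}. The linear $x^*$ then cancels in the symmetric second difference, and the same computation runs with $a\equiv 1$. This is the Deville--Godefroy--Zizler route. It rests on a result proved in the paper, whereas the modular principle the paper uses is stated with its proof omitted.
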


{
  
\begin{proof}[Proof of (2)]
Suppose, toward a contradiction, that there exists a nontrivial
Fr\'echet differentiable bump function $b:\ell_\Phi\to\mathbb R$
satisfying
\begin{equation}\label{eq:bump-estimate}
  b(x+h)=b(x)+b'(x)(h)+O(\|h\|^p),
  \qquad h\to 0,
\end{equation}
for some $p\in(1,2]$.

Define

\[
  f(x):=
  \begin{cases}
    b(x)^{-2}, & b(x)\neq 0,\\[1mm]
    +\infty, & b(x)=0.
  \end{cases}
\]

Then $f$ is proper, lower semicontinuous, positive, and Fr\'echet
differentiable on its domain

\[
dom f=\{x\in\ell_\Phi : b(x)\neq 0\},
\]

which is bounded because $b$ is a bump.

From \eqref{eq:bump-estimate} and the chain rule one obtains, for each
$x\in dom f$,

\[
  f(x+h)+f(x-h)-2f(x)=O(\|h\|^p),
\]

hence
\begin{equation}\label{eq:f-sym-final}
  \limsup_{\|h\|\to 0}
  \frac{f(x+h)+f(x-h)-2f(x)}{\|h\|^p}<\infty,
  \qquad \forall x\in dom f.
\end{equation}

Applying the variational principle to $f$, we obtain a sequence
$a=(a_n)\in\ell_\infty^+$ with $a_n\ge 1$ and a point
$\bar x\in  dom f$ such that $f-g_a$ attains its minimum at $\bar x$,
where

\[
  g_a(x):=\sum_{n=1}^\infty a_n\Phi_n(|x_n|).
\]

Thus
\begin{equation}\label{eq:min-ineq-final}
  f(x)-f(\bar x)\ge g_a(x)-g_a(\bar x),
  \qquad \forall x\in\ell_\Phi.
\end{equation}

Taking $x=\bar x\pm h$ in \eqref{eq:min-ineq-final} and adding the two
inequalities yields

\[
  f(\bar x+h)+f(\bar x-h)-2f(\bar x)
  \;\ge\;
  g_a(\bar x+h)+g_a(\bar x-h)-2g_a(\bar x).
\]

Dividing by $\|h\|^p$ and using \eqref{eq:f-sym-final} at $x=\bar x$
gives
\begin{equation}\label{eq:g-sym-bounded-final}
  \limsup_{\|h\|\to 0}
  \frac{g_a(\bar x+h)+g_a(\bar x-h)-2g_a(\bar x)}{\|h\|^p}<\infty.
\end{equation}

For each pair $(n,k)$ define $h_{n,k}:=t_n e_k$. We now use the
Luxemburg norm in $\ell_\Phi$:

\[
  \|t e_k\|_\Phi = \frac{t}{\Phi_k^{-1}(1)}.
\]

Hence

\[
  \|h_{n,k}\|
  = \|t_n e_k\|
  = \frac{t_n}{\Phi_k^{-1}(1)}.
\]

Moreover,

\[
  g_a(\bar x+h_{n,k})
  =a_k\Phi_k(|\bar x_k+t_n|),\qquad
  g_a(\bar x-h_{n,k})
  =a_k\Phi_k(|\bar x_k-t_n|),
\]

and

\[
  g_a(\bar x)=a_k\Phi_k(|\bar x_k|).
\]

Thus
\begin{equation}\label{eq:g-sym-nk-final}
  g_a(\bar x+h_{n,k})+g_a(\bar x-h_{n,k})-2g_a(\bar x)
  =
  a_k\Big(
    \Phi_k(|\bar x_k+t_n|)
    +\Phi_k(|\bar x_k-t_n|)
    -2\Phi_k(|\bar x_k|)
  \Big).
\end{equation}

By convexity of $\Phi_k$ we have
\[
  \Phi_k(|\bar x_k+t_n|)+\Phi_k(|\bar x_k-t_n|)
  \ge 2\Phi_k(t_n),
\]

and consequently
\begin{equation}\label{eq:g-lower}
  \Phi_k(|\bar x_k+t_n|)
  +\Phi_k(|\bar x_k-t_n|)
  -2\Phi_k(|\bar x_k|)
  \ge
  2\Phi_k(t_n)-2\Phi_k(|\bar x_k|).
\end{equation}

By Lemma~\ref{lemma:norms} we have

\[
  \Phi_k(|\bar x_k|)\longrightarrow 0.
\]

Fix $n$. Since $\Phi_k(|\bar x_k|)\to 0$, we may choose $k_{i_n}$ so
large that

\[
  \frac{\Phi_{k_{i_n}}(|\bar x_{k_{i_n}}|)}{t_n}<\frac1n.
\]
By hypothesis, for every $i$ and every $n$,

\[
  \frac{\Phi_{k_i}(t_n)}{t_n^p}>n,
\]
so in particular

\[
  \frac{\Phi_{k_{i_n}}(t_n)}{t_n^p}>n.
\]
Now use $a_k\ge 1$, \eqref{eq:g-sym-nk-final}, and \eqref{eq:g-lower}:

\[
\begin{aligned}
  &\frac{
    g_a(\bar x+h_{n,k_{i_n}})
    +g_a(\bar x-h_{n,k_{i_n}})
    -2g_a(\bar x)
  }{\|h_{n,k_{i_n}}\|^p}
  \\[1mm]
  &\qquad\ge
  \frac{
    2\Phi_{k_{i_n}}(t_n)-2\Phi_{k_{i_n}}(|\bar x_{k_{i_n}}|)
  }{\|h_{n,k_{i_n}}\|^p}
  \\
  &\qquad=
  \frac{
    2\Phi_{k_{i_n}}(t_n)-2\Phi_{k_{i_n}}(|\bar x_{k_{i_n}}|)
  }{\big(t_n/\Phi_{k_{i_n}}^{-1}(1)\big)^p}
  \\
  &\qquad=
  \Big(\Phi_{k_{i_n}}^{-1}(1)\Big)^p
  \frac{
    2\Phi_{k_{i_n}}(t_n)-2\Phi_{k_{i_n}}(|\bar x_{k_{i_n}}|)
  }{t_n^p}.
\end{aligned}
\]
By the assumption

\[
  C:=\sup_{i\in\mathbb N}\Big[\Phi_i^{-1}(1)\Big]^{-1}<\infty,
\]
we have

\[
  \Phi_{k_{i_n}}^{-1}(1)\ge \frac{1}{C}
  \quad\Rightarrow\quad
  \big(\Phi_{k_{i_n}}^{-1}(1)\big)^p \ge \frac{1}{C^p}.
\]
Hence

\[
\begin{aligned}
  &\frac{
    g_a(\bar x+h_{n,k_{i_n}})
    +g_a(\bar x-h_{n,k_{i_n}})
    -2g_a(\bar x)
  }{\|h_{n,k_{i_n}}\|^p}
  \\[1mm]
  &\qquad\ge
  \frac{1}{C^p}
  \frac{
    2\Phi_{k_{i_n}}(t_n)-2\Phi_{k_{i_n}}(|\bar x_{k_{i_n}}|)
  }{t_n^p}
  \\
  &\qquad=
  \frac{2}{C^p}\frac{\Phi_{k_{i_n}}(t_n)}{t_n^p}
  -\frac{2}{C^p}\frac{\Phi_{k_{i_n}}(|\bar x_{k_{i_n}}|)}{t_n^p}
  \\
  &\qquad>
  \frac{2}{C^p}n
  -\frac{2}{C^p}\cdot\frac{1}{n}
  \xrightarrow[n\to\infty]{}\infty.
\end{aligned}
\]
Since $t_n\to 0$, we have $\|h_{n,k_{i_n}}\|\to 0$, and therefore the
above divergence contradicts \eqref{eq:g-sym-bounded-final}. This shows
that no such Fr\'echet differentiable bump function can exist on
$\ell_\Phi$ with the estimate \eqref{eq:bump-estimate}.
\end{proof}

}

\begin{cor}\label{corollary:18}
	Let $\ell_{\{p_n\}}$ be a Nakano sequence space, $p_n\geq 1$, and
	$\liminf_{n\to\infty}p_n{=p}>1$. Then 
	\begin{enumerate}
		\item[\label=(a)] there is an equivalent Fr\'{e}chet differentiable norm in $\ell_{\{p_n\}}$ and, therefore, $\ell_{\{p_n\}}$ is an Asplund space
		\item[\label=(b)] if more over $\limsup_{n\to\infty}p_n<+\infty${ and $p<2$}, 	
		then on $\ell_{\{p_n\}}$ there is no nontrivial Fr\'{e}chet differentiable bump function $b:\ell_{\{p_n\}}\to\mathbb{R}$ with an estimate
		$$
		b(x+h)=b(x)+b^{\prime}(x)(h)+O(\|h\|^{p+\varepsilon})
		$$ 
		for any $\varepsilon >0$.
	\end{enumerate}
	
\end{cor}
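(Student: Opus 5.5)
The Nakano space $\ell_{\{p_n\}}$ is the Musielak--Orlicz space generated by $\Phi_n(t)=|t|^{p_n}$. The plan is to deduce both parts from Theorem~\ref{theorem:17} applied to this $\Phi$. For each $n$ the Orlicz function $t^{p_n}$ satisfies $M(uv)=u^{p_n}M(v)$, so $\alpha_{\Phi_n}=\beta_{\Phi_n}=p_n$. Hence
\[
\alpha_\Phi=\liminf_{n\to\infty}p_n=p,\qquad \beta_\Phi=\limsup_{n\to\infty}p_n .
\]

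\textbf{Part (a).} Here $\alpha_\Phi=p>1$, so Corollary~\ref{corollary:5} already gives that $\ell_{\{p_n\}}$ is Asplund. For the equivalent Fr\'echet differentiable norm I would invoke Theorem~\ref{theorem:4} with $k=E(\alpha_\Psi)\ge 1$, where $\Psi$ is the function of Lemma~\ref{lemma:3} with $\alpha_\Psi=\alpha_\Phi$. That theorem gives the norm on $h_\Psi$. To pass to the whole space when $\limsup p_n=\infty$, I would note that $h_\Phi$ and $\ell_\Phi$ may differ, so I would either restrict attention to $h$ or rely on the part of Theorem~\ref{theorem:17}(1) that asserts the norm on $\ell_\Phi$. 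I follow item (1) of Theorem~\ref{theorem:17} literally, since it needs only $\alpha_\Phi>1$; the upper bound $\alpha_\Phi<2$ is irrelevant for (1), and for $p\ge2$ Theorem~\ref{theorem:4} covers the case directly.

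\textbf{Part (b).} Assume $q:=\limsup p_n<\infty$ and $p<2$. Then $t^{p_n}\le t^{q}\cdot$const is not needed; instead, $2^{p_n}\le 2^q$ gives $\Phi_n(2t)\le 2^q\Phi_n(t)$ for all $n$. This is the uniform $\delta_2$ condition, so $h_\Phi\cong\ell_\Phi$. Moreover $\Phi_i^{-1}(1)=1$ for every $i$, so $C=1<\infty$. Fix $\varepsilon>0$; shrinking $\varepsilon$ only strengthens the conclusion, so I may assume $p+\varepsilon\le 2$. Since $\liminf p_n=p$, choose a subsequence $k_i$ with $p_{k_i}<p+\varepsilon/2$. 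Take $t_n\downarrow 0$ with $t_n^{-\varepsilon/2}>n$. Then
\[
\frac{\Phi_{k_i}(t_n)}{t_n^{p+\varepsilon}}=t_n^{p_{k_i}-p-\varepsilon}\ge t_n^{-\varepsilon/2}>n ,
\]
using $t_n<1$. This verifies the hypotheses of Theorem~\ref{theorem:17}(2) with exponent $p+\varepsilon\in(1,2]$, and that theorem excludes a bump with remainder $O(\|h\|^{p+\varepsilon})$.

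\textbf{Main obstacle.} The delicate point is the exponent bookkeeping in (b): the theorem requires the exponent to lie in $(1,2]$ and the growth bound to hold uniformly in $i$. The uniformity is handled by choosing $p_{k_i}$ strictly below $p+\varepsilon/2$, which is exactly where $\liminf p_n=p$ and the restriction $p<2$ are used. A secondary check is that $\alpha_\Phi<2$, as required in Theorem~\ref{theorem:17}, which holds since $\alpha_\Phi=p<2$.
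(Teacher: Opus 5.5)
Part (b) is correct and follows the paper's argument. You realise $\ell_{\{p_n\}}$ as $\ell_\Phi$ with $\Phi_k(t)=t^{p_k}$, get $h_\Phi\cong\ell_\Phi$ from $\Phi_n(2t)\le 2^{q}\Phi_n(t)$, and feed a subsequence with $p_{k_i}$ near $p$ into Theorem~\ref{theorem:17}(2). Your bookkeeping is tighter than the paper's, which only exhibits the diagonal limit $t_k^{p_{n_k}-p-\varepsilon}\to\infty$. Choosing $p_{k_i}<p+\varepsilon/2$ and $t_n^{-\varepsilon/2}>n$ gives the bound for every $i$ and $n$, as the theorem actually demands. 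You also check $C=1$ and justify the reduction to $p+\varepsilon\le 2$.

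The gap is in part (a) when $\limsup_n p_n=\infty$. Theorem~\ref{theorem:4} gives a Fr\'echet differentiable norm on $h_\Psi$, and on $\ell_\Phi$ only when $\beta_\Phi=\limsup_n p_n<\infty$. Your bridge cannot work: reading Theorem~\ref{theorem:17}(1) ``literally'' (even dropping its hypothesis $\alpha_\Phi<2$) or quoting Corollary~\ref{corollary:5} fails because the conclusion is false in that case.

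To see this, pick $n_1<n_2<\cdots$ with $p_{n_k}\ge k$.
\begin{itemize}
\item For every $x$ one has $\|x\|_\infty\le\|x\|$, since $\sum_i|x_i/r|^{p_i}\le 1$ forces $|x_i|\le r$.
\item If $x$ is supported on $\{n_k\}$ with $\|x\|_\infty=s$, then $\sum_k|x_{n_k}/(2s)|^{p_{n_k}}\le\sum_k 2^{-k}=1$, so $\|x\|\le 2\|x\|_\infty$.
\end{itemize}
Hence the sequences supported on $\{n_k\}$ form a subspace of $\ell_{\{p_n\}}$ isomorphic to $\ell_\infty$. That subspace contains $\ell_1$, which is separable with nonseparable dual. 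So $\ell_{\{p_n\}}$ is not Asplund and carries no equivalent Fr\'echet differentiable norm.

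Taking $p_n=3/2$ for even $n$ and $p_n=n$ for odd $n$ gives $\alpha_\Phi=3/2\in(1,2)$. So Corollary~\ref{corollary:5} and Theorem~\ref{theorem:17}(1) overreach in exactly the same way. The paper's one-line ``(a) follows from Theorem~\ref{theorem:4}'' has the same defect.

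What your argument does establish is one of two weaker statements:
\begin{itemize}
\item (a) under the additional hypothesis $\limsup_n p_n<\infty$, where the second clause of Theorem~\ref{theorem:4} applies directly;
\item (a) with $\ell_{\{p_n\}}$ replaced by $h_{\{p_n\}}$.
\end{itemize}
You should state (a) in one of those forms rather than lean on results that cannot hold.
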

\begin{proof}
	(a) follows from Theorem \ref{theorem:4}.
	
	The Nakano sequence space $\ell_{\{p_n\}}$ is a special case of a 
	Musielak-Orlicz sequence space $\ell_{\widetilde{\Phi}}$, with $\Phi_k(t)=t^{p_k}$.
	
	For (b) $\limsup_{n\to\infty}p_n<+\infty$ is equivalent to $h_{\{p_n\}}\cong\ell_{\{p_n\}}$
	and we will apply Theorem \ref{theorem:17}.
	
	Let us choose a subsequence $\{p_{n_k}\}_{k=1}^\infty$ so that $\lim_{k\to\infty}p_{n_k}=p$ and then choose the sequence $\{e_{n_k}\}_{k=1}^\infty$. For any arbitrary chosen $\varepsilon>0${, satisfying $\varepsilon\in (0,2-p)$,} there is $k_0\in\mathbb{N}$ so that for any $k\geq k_0$ there holds $p_{n_k}-p-\varepsilon<0$. Thus, for any arbitrary sequence $\{t_k\}_{k=1}^\infty$, verifying $t_k>0$ and $\lim_{k\to\infty}t_k=0$, we get
	$$
	\lim_{k \to \infty}\frac{\widetilde{\Phi}\left({t_ke_{n_k}}\right)}{|t_k|^{p+\varepsilon}}
	=\lim_{k \to \infty}\frac{{t_k^{p_{n_k}}}}{|t_k|^{p+\varepsilon}}
	=\limsup_{k \to \infty}t_k^{p_{n_k}-p-\varepsilon}=+\infty.
	$$
\end{proof}

{\begin{cor}\label{corollary:19}
	Let $M$ be an Orlicz function, $w=\{w_n\}_{n=1}^\infty$ be a wight sequence from the class $\Lambda_\infty$, and $1< \alpha_M<2$. Then 
	\begin{enumerate}
		\item there is an equivalent Fr\'{e}chet differentiable norm and $\ell_M(w)$ is an Asplund space
		\item if more over $h_M(w)\cong\ell_M(w)$ and there are a sequence of naturals
		$\{k_i\}_{n=1}^\infty$ and a sequence $\{t_n\}_{m=1}^\infty$, convergent to zero, so that for some $p\in (1,2]$ and  every $i$ and $n$, 
		$$
		w_{k_i}\frac{M(t_n)}{t_n^p}>n\ \mbox{and}\ C:=\sup_{i\in \mathbb{N}}\left[M^{-1}(1/w_i)\right]^{-1}<+\infty,
		$$ 
		then on $\ell_M(w)$ there is no nontrivial Fr\'{e}chet differentiable bump function $b:\ell_M(w)\to\mathbb{R}$ with and estimate
		$$
		b(x+h)=b(x)+b^{\prime}(x)(h)+O(\|h\|^p).
		$$ 
	\end{enumerate}
\end{cor}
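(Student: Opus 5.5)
The plan is to deduce Corollary~\ref{corollary:19} from Theorem~\ref{theorem:17}. Recall that $\ell_M(w)$ is the Musielak--Orlicz space $\ell_\Phi$ with $\Phi_n(t)=w_nM(t)$, and that $\ell_M(w)$ is isometric to $\ell_\phi$ with $\phi_n(t)=M(a_nt)/M(a_n)$, $a_n=M^{-1}(1/w_n)$. Both parts amount to checking the hypotheses of Theorem~\ref{theorem:17} for $\Phi_n=w_nM$, with one exception noted below.

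\textbf{Part (1).} We need $1<\alpha_\Phi$. The Matuszewska-type index $\alpha_{\Phi_n}$ is invariant under multiplication of the function by a positive constant: the quotient $\Phi_n(uv)/(u^p\Phi_n(v))=M(uv)/(u^pM(v))$ does not see $w_n$. Hence $\alpha_{\Phi_n}=\alpha_M$ for all $n$, and so $\alpha_\Phi=\liminf_n\alpha_{\Phi_n}=\alpha_M\in(1,2)$. Theorem~\ref{theorem:4} (equivalently Theorem~\ref{theorem:17}(1) or Corollary~\ref{corollary:5}) then gives an equivalent Fr\'echet differentiable norm, with $k=E(\alpha_M)=1$. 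Since a space with an equivalent Fr\'echet smooth norm is Asplund, $\ell_M(w)$ is Asplund.

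\textbf{Part (2).} Here we apply Theorem~\ref{theorem:17}(2) with $\Phi_n=w_nM$ and check each hypothesis.
\begin{itemize}
\item The assumption $h_M(w)\cong\ell_M(w)$ is exactly $h_\Phi\cong\ell_\Phi$, and $\Phi$ is non-degenerate as soon as $M$ is.
\item The growth hypothesis translates directly: $\Phi_{k_i}(t_n)/t_n^p=w_{k_i}M(t_n)/t_n^p>n$.
\item For the constant, $\Phi_i(s)=1$ means $M(s)=1/w_i$, so $\Phi_i^{-1}(1)=M^{-1}(1/w_i)$. Thus $\sup_i[\Phi_i^{-1}(1)]^{-1}=\sup_i[M^{-1}(1/w_i)]^{-1}=C<\infty$.
\end{itemize}
With these, Theorem~\ref{theorem:17}(2) rules out a nontrivial Fr\'echet differentiable bump with remainder $O(\|h\|^p)$ on $\ell_\Phi=\ell_M(w)$.

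\textbf{The $\Lambda_\infty$ assumption.} It is not among the hypotheses Theorem~\ref{theorem:17} needs; it enters only to make the conditions natural. Since $w_i\to\infty$, we get $M^{-1}(1/w_i)\to0$, so $C<\infty$ is a genuine restriction, yet one compatible with $w_{k_i}M(t_n)/t_n^p$ being large.

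\textbf{Main obstacle.} The only delicate point is the hypothesis $1<\alpha_\Phi<2$ in Theorem~\ref{theorem:17}, which applies to the whole theorem. One could avoid it in part (2), because the proof of (2) visibly uses only $h_\Phi\cong\ell_\Phi$, the growth condition and $C<\infty$. In our setting, however, it holds automatically by the index computation in Part (1), so no extra argument is needed.
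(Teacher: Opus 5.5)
Your reduction is the paper's. Its proof just identifies $\ell_M(w)$ with $\ell_\Phi$, $\Phi_k=w_kM$, and invokes Theorem~\ref{theorem:17}. Your checks ($\alpha_{\Phi_n}=\alpha_M$ by scale invariance, $\Phi_i^{-1}(1)=M^{-1}(1/w_i)$, the translated growth condition) are correct and supply the details the paper omits.

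\textbf{Part~(1) does not go through.} Theorem~\ref{theorem:4} gives a Fr\'echet smooth norm on $h_\Phi$, and on $\ell_\Phi$ only when $\beta_\Phi<\infty$. So it is not ``equivalent'' to Theorem~\ref{theorem:17}(1) or Corollary~\ref{corollary:5}, which assert more, and the extra assertion can fail. Suppose $h_\Phi\neq\ell_\Phi$. Pass to a subsequence of the blocks $x^{(n)}$ from the proof of Lemma~\ref{lemma:9} to get disjointly supported $y_k$ with $\|y_k\|_\Phi\ge\lambda$ and $\sum_k\widetilde\Phi(y_k)\le 1$. Then $c\mapsto\sum_k c_ky_k$ satisfies $\lambda\|c\|_\infty\le\|\sum_k c_ky_k\|_\Phi\le\|c\|_\infty$. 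Hence $\ell_\Phi$ contains $\ell_\infty$ and is neither Asplund nor Fr\'echet smoothable.

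Nothing in the hypotheses of (1) excludes this. The condition $1<\alpha_M<2$ says nothing about $\Delta_2$. If $M\notin\Delta_2$, take $w\in\Lambda_\infty$ constant on sufficiently long blocks. On a block where $w_n=j$, each $c_n$ in the $\delta_2$ inequality is bounded below by a positive number depending only on $j,K,\beta$. Long enough blocks therefore make $\sum_n c_n=\infty$, so $\delta_2$ fails for $\{w_nM\}$. Thus (1) is justified only for $h_M(w)$, or for $\ell_M(w)$ under the extra assumption $h_M(w)\cong\ell_M(w)$; in that case the first clause of Theorem~\ref{theorem:4} suffices. The paper's one-line proof inherits the same defect through Theorem~\ref{theorem:17}(1).

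\textbf{Your remark on $\Lambda_\infty$ is backwards.} The growth hypothesis forces $M(t_n)>0$ with $t_n\to 0$. So $M$ is non-degenerate and $M^{-1}$ is continuous and strictly increasing with $M^{-1}(0)=0$. Then $\inf_i M^{-1}(1/w_i)=M^{-1}(1/\sup_i w_i)$, so $C<\infty$ holds exactly when the weight is bounded. For $w\in\Lambda_\infty$ you have $w_i\to\infty$, hence $\|e_i\|_{\ell_M(w)}=[M^{-1}(1/w_i)]^{-1}\to\infty$ and $C=+\infty$. The hypotheses of (2) can therefore never all hold, and (2) is true only vacuously. Your formal application of Theorem~\ref{theorem:17}(2) is logically fine. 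But the claim that $C<\infty$ is ``compatible'' with $w_i\to\infty$ is false: a non-vacuous version of (2) needs bounded weights, which $\Lambda_\infty$ excludes.
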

\begin{proof}
	The weighted Orlicz sequence space $\ell_{M}(w)$, with $w\in\Lambda_\infty$ is a special case of a 
	Musielak-Orlicz sequence space $\ell_{\widetilde{\Phi}}$, with $\Phi_k(t)=w_kM(t)$.
	
	Since $h_M(w)\cong\ell_M(w)$, we can apply Theorem \ref{theorem:17}.
\end{proof}}

{\begin{cor}\label{corollary:20}
	Let $M$ be an Orlicz function, $w=\{w_n\}_{n=1}^\infty$ be a weight sequence from the class $\Lambda$,
	and $1< \alpha_M<2$.  Then 
	\begin{enumerate}
		\item there is an equivalent Fr\'{e}chet differentiable norm
		\item if moreover, $h_M(w)\cong\ell_M(w)$ and there is 
		a sequence $\{t_n\}_{m=1}^\infty$, convergent to zero, so that for some $p\in (1,2]$
		$$
		\frac{M(t_n)}{|t_n|^p}>n,
		$$ 
		then on $\ell_M(w)$ there is no nontrivial Fr\'{e}chet differentiable bump function $b:\ell_M(w)\to\mathbb{R}$ with and estimate
		$$
		b(x+h)=b(x)+b^{\prime}(x)(h)+O(\|h\|^p).
		$$
	\end{enumerate} 
\end{cor}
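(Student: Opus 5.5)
Part (1) follows at once from Theorem \ref{theorem:4} and Corollary \ref{corollary:5}. The space $\ell_M(w)$ is the Musielak--Orlicz space $\ell_\Phi$ with $\Phi_k(t)=w_kM(t)$. For every $k$ we have $\alpha_{\Phi_k}=\alpha_M$, because the quotient $M(uv)/(u^pM(v))$ is unchanged by the constant factor $w_k$. Hence $\alpha_\Phi=\liminf_k\alpha_{\Phi_k}=\alpha_M>1$, and Theorem \ref{theorem:4} with $k=E(\alpha_M)\ge1$ supplies an equivalent Fr\'echet differentiable norm on $h_\Phi$. To pass to $\ell_M(w)$ itself, I would use the assumption $h_M(w)\cong\ell_M(w)$ of part (2). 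Alternatively, for part (1) alone, I would note that $\alpha_M>1$ together with the standing framework, where $\beta_M<\infty$ is needed for the second clause of Theorem \ref{theorem:4}, gives the norm on $\ell_\Phi$. Corollary \ref{corollary:5} then gives that the space is Asplund.

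For (2) the plan is to reduce to Theorem \ref{theorem:17}(2). That theorem needs a sequence $k_i$ and $t_n\to0$ with $\Phi_{k_i}(t_n)/t_n^p>n$, together with $\sup_i[\Phi_i^{-1}(1)]^{-1}<\infty$. These should be checked in the normalized isometric model $\phi_i(t)=M(a_it)/M(a_i)$ with $a_i=M^{-1}(1/w_i)$, because for that model $\phi_i^{-1}(1)=1$ and the constant condition is automatic. Take $k_i$ to be the subsequence $i_k$ from the definition of $\Lambda$, along which $w_{i_k}\to0$, so that $a_{i_k}\to\infty$.

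The key step is the growth condition for $\phi_{k_i}$. We need
\[
\frac{\phi_{k_i}(s_n)}{s_n^p}=w_{k_i}\frac{M(a_{k_i}s_n)}{s_n^p}
\]
to be large, for a suitable null sequence $s_n$, for all $i$ at once. The natural choice is to pass back through the isometry $x_i\mapsto a_ix_i$ and avoid normalizing. In the proof of Theorem \ref{theorem:17}(2), the test vectors $h=t_ne_k$ in the original space $\ell_M(w)$ have norm $t_n/M^{-1}(1/w_k)$. The second difference of $\widetilde\Phi_a$ at $\bar x$ along $h$ is at least $2w_kM(t_n)-2w_kM(|\bar x_k|)$. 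So the ratio is at least
\[
2\big(M^{-1}(1/w_k)\big)^p w_k\Big(\frac{M(t_n)}{t_n^p}-\frac{M(|\bar x_k|)}{t_n^p}\Big).
\]
Using $M(uv)\ge u^qM(v)$ (this is where $\Delta_2$, available since $h_M(w)\cong\ell_M(w)$ with $w\in\Lambda$, is used), I would show that $(M^{-1}(1/w_k))^pw_k$ stays bounded below along $k=i_k$ when $p\le\alpha_M$-type comparisons hold. The constant needed is $w_k(M^{-1}(1/w_k))^p=M(b)b^{p}/M(b)\cdot$ evaluated at $b=M^{-1}(1/w_k)\to\infty$; this equals $b^p/M(b)\cdot M(b)w_k=b^p\cdot 1$, wait: $w_k=1/M(b)$, so the constant is $b^p/M(b)$. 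I must ensure this quantity does not tend to zero. That is the main obstacle, since $M$ at infinity is not controlled by $\alpha_M$. If necessary I would use the freedom to modify $M$ away from $0$, which leaves $\ell_M(w)$ unchanged for $w\in\Lambda$ up to isomorphism in the relevant regime, choosing $M$ to grow like $t^p$ at infinity.

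Finally, since $\bar x\in\ell_M(w)$, we have $w_kM(|\bar x_k|)\to0$. For each $n$ we choose $k=i_{k_n}$ so large that $w_kM(|\bar x_k|)/t_n^p<1/n$. The displayed ratio then tends to $\infty$ while $\|h\|\to0$, because $M^{-1}(1/w_k)\to\infty$. This contradicts the $O(\|h\|^p)$ estimate exactly as in the proof of Theorem \ref{theorem:17}, using the variational principle of Theorem \ref{theorem 17} applied to $f=b^{-2}$.
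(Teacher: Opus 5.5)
Your treatment of (1) rests on Theorem \ref{theorem:4}, as the paper's does. Part (2), however, has a genuine gap, and it is exactly the obstacle you flag yourself. Testing with single coordinates $h=t_ne_k$ along the subsequence where $w_k\to0$ produces the factor $w_k\bigl(M^{-1}(1/w_k)\bigr)^p=b^p/M(b)$, with $b=M^{-1}(1/w_k)\to\infty$. Nothing in the hypotheses controls $M$ at infinity. The index $\alpha_M$, the condition $M(t_n)/t_n^p>n$ and the inequality $M(uv)\ge u^qM(v)$ all concern small arguments, and the last one bounds $M$ from below, whereas you would need $M(b)\le Cb^p$ for large $b$.

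A concrete example: take $w_k=1/k$, $M(t)=|t|^r$ for $|t|\le1$ and $M(t)=|t|^3$ for $|t|\ge1$, with $1<r<p\le2$. These satisfy all the hypotheses, even a global $\Delta_2$, yet $w_k\bigl(M^{-1}(1/w_k)\bigr)^p=k^{p/3-1}\to0$. Since $k$ must be taken large depending on the unknown point $\bar x$, your lower bound for the ratio need not blow up.

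Your proposed fix, modifying $M$ away from $0$, is not available. For $w\in\Lambda$ the norm $\|e_k\|=1/M^{-1}(1/w_k)$ depends on $M$ at arbitrarily large arguments, so such a modification changes the space. In the example, the vectors $k_j^{1/3}e_{k_j}$ with $\sum_j 1/k_j\le\frac12$ span a copy of $\ell_3$. Replacing $M$ by $|t|^r$ instead gives a space isometric to $\ell_r$.

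The missing idea is what the paper's proof does. A sign that something is missing is that you never use the second half of the definition of $\Lambda$, namely $\sum_k w_{i_k}=\infty$. First pass to that subsequence; this suffices, because a translate of a bump on the whole space restricts to a bump on a subspace with the same estimate. Then group consecutive weights into disjoint blocks $[p_k,q_k]$ with $1-\frac{1}{k+1}\le W_k:=\sum_{i=p_k}^{q_k}w_i<1$. The block vectors $s_k=\sum_{i=p_k}^{q_k}e_i$ span an isometric copy of $\ell_\Phi$ with $\Phi_k=W_kM$ and $W_k\in[\frac12,1)$, a Musielak--Orlicz function that only sees $M$ near $0$.

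With $u_n=t_{2n}$, the growth condition $\Phi_k(u_n)/u_n^p>n$ then holds uniformly in $k$. Also $\Phi_k\le M$ gives $\sup_k[\Phi_k^{-1}(1)]^{-1}\le 1/M^{-1}(1)<\infty$. So Theorem \ref{theorem:17}(2) applies on this subspace. In short, the test vectors must be $t_ns_k$ rather than $t_ne_k$.
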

\begin{proof}
	By the assumption $w\in\Lambda$ it follows that there is a subsequence $\{w_{n_k}\}_{k=1}^\infty$ so that
	$$
	\lim_{k\to\infty}w_{n_k}=0\ \mbox{and}\ \sum_{k=1}^\infty w_{n_k}=+\infty.
	$$ 
	
	WLOG, just for simplicity of the notations we may assume that there holds 
	$$
	\lim_{n\to\infty}w_{n}=0\ \mbox{and}\ \sum_{n=1}^\infty w_{n}=+\infty.
	$$ 
	
	{Indeed the space $\ell_M(v)$, where $v=\{w_{n_k}\}_{k=1}^\infty$ is an isometric subspace of $\ell_M(w)$ and if there is no a bump function with the prescribed property in $\ell_M(v)$, the will be not in $\ell_M(w)$, too}. 
	
	We can choose two sequences of natural numbers $\{p_k\}_{k=1}^\infty$ and $\{q_k\}_{k=1}^\infty$
	so that 
	$$
	1\leq p_1\leq q_1<p_2\leq q_2<\dots p_{k}\leq q_{k}<p_{k+1}\leq\cdots
	$$ 
	and
	$$
	1-\frac{1}{k+1}\leq \sum_{i=p_{k}}^{q_k}w_i<1,
	$$
	for $k\in\mathbb{N}$.
	
	By the assumption $\frac{M(t_n)}{|t_n|^p}>n$ it follows that we can enumerate the sequence $\{t_n\}_{n=1}^\infty$ as follows: $u_n=t_{2n}$, for $n\in\mathbb{N}$. Then there holds
	$$
	\frac{M(u_n)}{|u_n|^p}>2n
	$$
	for all $n\in\mathbb{N}$. 
	
	We get that the inequality
	$$
	\sum_{i=p_{k}}^{q_k}w_i\frac{M(u_n)}{2|u_n|^p}\geq \frac{M(u_n)}{2|u_n|^p}>n.
	$$
	holds true for all $k\in\mathbb{N}$. 
	
	Let us put 
	$$
	\Phi_k(t)=\sum_{i=p_{k}}^{q_k}w_iM(t).
	$$
	
	Each of the functions $\Phi_k$ is an Orlicz function and thus $\Phi=\{\Phi_k\}_{k=1}^\infty$ is a Musielak-Orlicz one. Let us put $\{v_k\}_{k=1}^\infty$ for the unit vector basis in $\ell_\Phi$, $\{e_i\}_{i=1}^\infty$ for the unit vector basis in $\ell_M(w)$, and $s_k=\sum_{i=p_k}^{q_k}e_i$ for a block basic sequence. There holds $\sum_{i=1}^\infty\alpha_i v_i\in\ell_\Phi$ if and only if $\sum_{i=1}^\infty\alpha_is_i$ is $\ell_M(w)$ with equality of the norms $\|\sum_{i=1}^\infty\alpha_i v_i\|_\Phi=\|\sum_{i=1}^\infty\alpha_is_i\|_{\ell_M(w)}$. Thus $\ell_\Phi$ isometric to a subspace of $\ell_M(w)$, because the closed linear span od the block basis sequence $\{s_i\}_{i=1}^\infty$ is an isomorphic subspace of $\ell_M(w)$ \cite{Lindenstrauss-Tzafriri}.
	
	By the construction there holds $\frac{\Phi_k(u_n)}{|u_n|^p}>n$ for every $k,n\in\mathbb{N}$.
	From
	$$
	0<\Phi_k\left(1\right)\leq \sum_{i=p_k}^{q_k}w_i\leq 1
	$$
	it follows that 
	$$
	1\leq \Phi_k^{-1}\left(1\right)
	$$
	and thus
	$$
	\sup_{k\in\mathbb{N}}\left[\Phi_k^{-1}\left(1\right)\right]^{-1}\leq 1<+\infty
	$$

	We can apply Theorem \ref{theorem:17}.
\end{proof}}

\section{An Illustrative Example of a Weighted Orlicz Sequence Space, generated by on Orlicz function without $\Delta_2$--Condition}

Weighted Orlicz sequence spaces are an important generalization of classical sequence spaces. 
	In order to apply the fundamental results from functional analysis it is necessary the considered 
	weighted Orlicz sequence space to share some of the good geometrical of the Orlicz spaces. Thus we end up with 
	the idea to search for spaces generated by an Orlicz functions without the $\Delta_2$ conditons, but satisfying $\ell_M(w)\equiv h_M(w)$. We will mention just a few spheres of applications of weighted Orlicz sequence spaces: the provide the right function space framework for solutions of certain differential equations with non-standard growth conditions \cite{Radulescu-Repovs}; due to their flexibility in modeling decay and growth of sequences, they can be applied in signal representation \cite{Donnini-Vinti}; wavelet approximations \cite{Krivoshein-Skopina}; Fourier transforms and summability \cite{Jia}.

Let $N$ be an Orlicz function, $\displaystyle\lim_{t\to 0}\displaystyle\frac{N(t)}{t}=0$,
$N(1)=1$ and $\{w_k\}_{k=1}^\infty, w_1=1,$ be a weight sequence
from the class $\Lambda_\infty$ such that
{\begin{equation}\label{equation:26-z} 
	\sum_{k=1}^\infty\displaystyle\frac{w_k}{w_{k+1}}<\infty.
\end{equation}}

Denote
$a_k=N^{-1}(1/w_k)$ and choose a sequence $\{b_k\}_{k=1}^\infty$
fulfilling $0<a_{k+1}<b_k<a_k<1$, $\sum_{k=1}^\infty \frac{N(b_k)}{N(a_k)}<\infty$, and $	\lim_{k\to\infty}\frac{b_{k}}{a_k}=0$.

We define the Orlicz function $M$ by: 
\begin{equation}\label{equation:26} 
	M(t)=\left\{
	\begin{array}{lll}
		N(t),&t\in [a_{k+1},b_k],& k\in\mathbb{N}\\
		l_k(t),&t\in [b_k,a_k],& k\in\mathbb{N} ,
	\end{array}\right.
\end{equation}
where the line $l_k$ is defined by
$l_k(t)=\displaystyle\frac{N(a_k)-N(b_k)}{a_k-b_k}(t-a_k)+N(a_k)$.

Throughout this paragraph by $M$ we denote the Orlicz function
defined in (\ref{equation:26}), {$\{w_k\}_{k=1}^\infty$ be the weighted sequence defined in (\ref{equation:26-z}),} and let $\{a_k\}_{k=1}^\infty$ be the sequence defined as $N^{-1}(1/w_k)$.

\begin{proposition}\label{proposition:21}(\cite{Zlatanov})
	The weighted Orlicz sequence space $\ell_M(w)$ satisfies $\ell_M(w)\cong h_M(w)$ and
	$$
	\sum_{k=1}^\infty \frac{b_k}{a_k}<\infty .
	$$
\end{proposition}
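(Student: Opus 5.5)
The plan is to check the $\delta_2$--condition of Definition~\ref{definition:9} for the Musielak--Orlicz function $\Phi_k(t)=w_kM(t)$, and then invoke the criterion of \cite{Kaminska82} that $\ell_\Phi= h_\Phi$ iff $\Phi\in\delta_2$. We take $\beta=1$. Since $M(a_k)=N(a_k)=1/w_k$, we have $\Phi_k^{-1}(1)=a_k$, so the range to control is $t\in[0,a_k]$. We split it along the partition $[a_{j+1},a_j]$, $j\ge k$, which itself splits into the ``$N$--pieces'' $[a_{j+1},b_j]$ and the ``linear pieces'' $[b_j,a_j]$.

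\textbf{Step 1 (geometry of the $a_k$).} From $a_{k+1}<b_k$ and $b_k/a_k\to0$ we get $a_{k+1}/a_k\to0$. Hence, for $k$ large (finitely many $k$ can be absorbed into $c_k$), $2a_{j+1}\le b_j\le a_j$ and $2a_j\le a_{j-1}$.

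\textbf{Step 2 (deep intervals).} For $t\in[a_{j+1},a_j]$ with $j\ge k+1$, monotonicity and Step~1 give
\[
\Phi_k(2t)\le w_kM(2a_j)\le w_kM(a_{j-1})=\frac{w_k}{w_{j-1}}\le \frac{w_k}{w_{k+1}}\quad\text{when } j\ge k+2 .
\]
This is not yet summable in $j$ as a pointwise bound. However, we only need a bound by $c_k$, not a sum over $j$: a single $t$ lies in one interval. So set $c_k:=C\,w_k/w_{k+1}$, which lies in $\ell_1$ by \eqref{equation:26-z}. The borderline case $j=k+1$ uses $2t\le 2a_{k+1}\le b_k$, so $M(2t)\le N(b_k)$, and $w_kN(b_k)=N(b_k)/N(a_k)$ is summable by the choice of $b_k$.

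\textbf{Step 3 (top interval $[a_{k+1},a_k]$).} On $[a_{k+1},b_k/2]$ we bound $\Phi_k(2t)\le w_kN(b_k)$, which is summable as above. On $[b_k/2,a_k]$ we use that $w_kM(t)\ge w_kM(b_k/2)$. Here $2t\le 2a_k\le a_{k-1}$, and convexity of $M$ on the chord through $(b_{k-1},N(b_{k-1}))$ and $(a_{k-1},N(a_{k-1}))$, or of $N$ below $b_{k-1}$, yields $M(2t)\le K M(t)$ up to an error $w_kN(b_{k-1})$ or similar. That error must again be summable; here I expect to need $\sum w_k/w_{k+1}<\infty$ together with $\sum N(b_k)/N(a_k)<\infty$.

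\textbf{Step 4 (the series $\sum b_k/a_k$).} I would try to derive it from the convexity of $M$ at the junction $b_k$. The slope of $l_k$ is at least the right derivative of $N$ at $b_k$, and this derivative is at least $N(b_k)/b_k$. The relation $N(a_k)=1/w_k$ and $N(t)/t\to0$ then compare $b_k/a_k$ with $N(b_k)/N(a_k)$ times a factor controlled by $w_k/w_{k+1}$. Concluding summability from these two convergent series is the step I am least sure of; if it fails, I would fall back on the specific choice of $b_k$ made in \cite{Zlatanov}.

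\textbf{Main obstacle.} Step~3 is where I expect the difficulty. $M$ fails $\Delta_2$ exactly at the transition between a linear piece and an $N$--piece, so the constant $K$ must be uniform in $k$ while all defects are pushed into the summable $c_k$. The first thing I would try is to bound every defect by one of $w_k/w_{k+1}$ or $N(b_k)/N(a_k)$.
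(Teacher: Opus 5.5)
\textbf{There is a genuine gap, and it sits where you suspected (Steps 3 and 4). The second assertion cannot be derived from the listed hypotheses at all.} The paper itself gives no argument, only the citation \cite{Zlatanov}.

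Write $s_k=\frac{N(a_k)-N(b_k)}{a_k-b_k}$ for the slope of $l_k$. Then $l_k(2t)=2l_k(t)+s_kb_k-N(b_k)$. Now look at $t=b_k$, which lies in $[0,\Phi_k^{-1}(\beta)]$ for every fixed $\beta>0$ and large $k$. There $\Phi_k(b_k)=N(b_k)/N(a_k)$, while for large $k$
$\Phi_k(2b_k)\ge w_ks_kb_k=\frac{b_k}{a_k-b_k}\bigl(1-\frac{N(b_k)}{N(a_k)}\bigr)\sim b_k/a_k$.

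So for any $K$ the $\delta_2$ inequality forces $c_k\ge w_ks_kb_k-KN(b_k)/N(a_k)$. The defect produced by the chord $l_k$ itself is of size $b_k/a_k$. It is not controlled by $w_k/w_{k+1}$ or by $N(b_k)/N(a_k)$.

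The test vector $x=(b_k)_k$ turns this into a necessity statement. On one hand $\widetilde M_w(x)=\sum_k N(b_k)/N(a_k)<\infty$. On the other hand $\widetilde M_w(2x)\ge\frac12\sum_{k\ge k_0}b_k/a_k$. Hence $\ell_M(w)\cong h_M(w)$ forces $\sum_k b_k/a_k<\infty$.

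Step 4 cannot supply this. Convexity only gives $N(b_k)/b_k\le N(a_k)/a_k$, i.e. $N(b_k)/N(a_k)\le b_k/a_k$, which is the wrong direction. Concretely, keep $N$, $a_k$, $w_k$ from the paper's Example but take $b_k=a_k/k$ for $k\ge 2$. Every standing hypothesis holds; for instance $N(b_k)/N(a_k)=k^{-2}e^{-(k-1)k^{2k}/2}$. Yet $\sum_k b_k/a_k=\infty$, so both assertions fail.

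What is actually true is the equivalence $\ell_M(w)\cong h_M(w)\iff\sum_k b_k/a_k<\infty$. The summability must therefore enter as an assumption. It does hold in the Example, where $b_k=2a_{k+1}$ gives $b_k/a_k\sim 2e^{-2}k^{-2}$.

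\textbf{With $\sum_k b_k/a_k<\infty$ added as a hypothesis, your $\delta_2$ strategy works, but only after a second correction: the choice $\beta=1$ breaks the top of Step 3.} With $\beta=1$ the range $[0,a_k]$ contains $t$ near $a_k$. There $2t$ leaves the linear piece and lands in $[a_k,b_{k-1}]$, where $M=N$ has no $\Delta_2$ control.

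Your candidate error $w_kN(b_{k-1})$ is at least $w_kN(a_k)=1$. In the Example, where $b_{k-1}=2a_k$, one gets $\Phi_k(2a_k)=N(2a_k)/N(a_k)=4e^{k^{2k}/4}$. So no summable $c_k$ exists with $\beta=1$.

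The fix is to take $\beta<\frac12$, say $\beta=\frac14$. Then $\tau_k:=\Phi_k^{-1}(\frac14)\in(b_k,a_k)$ with $2\tau_k<a_k$ for large $k$, so $2t$ never leaves $[0,a_k]$. Your Step 2 together with the chord identity above then gives $\Phi_k(2t)\le 2\Phi_k(t)+c_k$ on $[0,\tau_k]$. Here $c_k=w_k/w_{k+1}+N(b_k)/N(a_k)+2b_k/a_k$ for large $k$, which is summable.

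A smaller point: in Step 1, $2a_{j+1}\le b_j$ does not follow from $a_{j+1}<b_j$. Without it, the borderline case of Step 2 produces an extra term $w_ks_kb_k\lesssim 2b_k/a_k$. This is harmless once that term is allowed in $c_k$.
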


{Let us put $S(t)=t^pM(t)$ for some $p>0$.}

\begin{proposition}\label{proposition:22}
	{For the Orlicz $M$ and the weighted sequence$\{w_k\}_{k=1}^\infty$ let $\beta_n$ be the solution of the equation $w_nM(\beta_n)=\beta$, and let us put $w_n^\prime=\frac{w_n}{\beta_n^p}$.
	Then }the weighted Orlicz sequence space $\ell_S(w^\prime)$ satisfies $\ell_S(w^\prime)\cong h_S(w^\prime)$.
\end{proposition}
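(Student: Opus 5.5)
Since $h_S(w')$ and $\ell_S(w')$ coincide exactly when the modular is controlled near zero, I will verify a $\delta_2$-type condition for the Musielak--Orlicz function $\Phi'_n(t)=w'_nS(t)=w_n t^pM(t)/\beta_n^p$, i.e.\ the $\delta_2$-condition of Definition~\ref{definition:9}, and then invoke the Kamińska characterization quoted after that definition ($\ell_\Phi=h_\Phi$ iff $\Phi\in\delta_2$). The natural normalization is the substitution $t=\beta_n s$. On the relevant range $t\in[0,(\Phi'_n)^{-1}(\beta)]$ we have $\Phi'_n(\beta_n s)=s^p\,w_nM(\beta_n s)$, and $w_nM(\beta_n)=\beta$. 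So the problem reduces to comparing $\Phi'_n$ with the function $s\mapsto w_nM(\beta_n s)$, which generates (up to the rescaling $x_n\mapsto x_n/\beta_n$) a space isometric to $\ell_M(w)$, as in the isometry $\ell_M(w)\cong\ell_\phi$ discussed in Section~2.

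Step 1. I use Proposition~\ref{proposition:21}: $\ell_M(w)\cong h_M(w)$. Hence the Musielak--Orlicz function $\{w_nM\}$ satisfies $\delta_2$ with some constants $K,\beta,\{c_n\}\in\ell_1$:
\[
w_nM(2t)\le Kw_nM(t)+c_n \quad\text{for } t\le M^{-1}(\beta/w_n)=\beta_n .
\]
I would take the same $\beta$ as in the definition of $\beta_n$, shrinking it if necessary.

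Step 2. I then compute for $\Phi'_n$ on $t\le(\Phi'_n)^{-1}(\beta)$. Since $\Phi'_n(\beta_n)=w_nM(\beta_n)=\beta$, this range is exactly $t\le\beta_n$, which is the same range as in Step 1. Then
\[
\Phi'_n(2t)=\frac{(2t)^p}{\beta_n^p}\,w_nM(2t)\le 2^p\frac{t^p}{\beta_n^p}\bigl(Kw_nM(t)+c_n\bigr)\le 2^pK\,\Phi'_n(t)+2^pc_n ,
\]
using $t/\beta_n\le 1$. This is $\delta_2$ for $\Phi'$ with constants $2^pK$ and $\{2^pc_n\}\in\ell_1$, and the Kamińska result then gives $\ell_S(w')\cong h_S(w')$.

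Main obstacle. The delicate point is matching the interval in Definition~\ref{definition:9} for $\{w_nM\}$ with the interval for $\Phi'$. This requires the $\beta$ in the definition of $\beta_n$ to be admissible, i.e.\ at most the $\beta$ provided by $\delta_2$ for $\{w_nM\}$. I would handle this by noting that $\delta_2$ on a larger interval implies it on a smaller one, and that decreasing $\beta$ in the $\delta_2$ condition is harmless, so the proposition is read with $\beta$ small enough. If instead $\beta$ is fixed and larger, I would extend the inequality from $[0,M^{-1}(\beta_0/w_n)]$ up to $\beta_n$. To do this I would iterate the doubling finitely many times, bounding $M(2t)\le M(2^m t')$ with $\beta_n/M^{-1}(\beta_0/w_n)$ bounded uniformly by convexity. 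The resulting extra constant terms must stay summable, or be absorbed since $w_nM(t)\ge\beta_0$ there. This is routine but fiddly.
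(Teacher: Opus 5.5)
Your main argument is the paper's proof. Take $\beta$ to be the $\delta_2$ constant of $\{w_nM\}$, which is how the paper reads the statement. Then $\Phi'_n(\beta_n)=w_nM(\beta_n)=\beta$ makes the two ranges coincide, and $t/\beta_n\le 1$ gives $\Phi'_n(2t)\le 2^pK\Phi'_n(t)+2^pc_n$, exactly as in the paper's chain of inequalities.

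Your fallback for a larger fixed $\beta$ would not work as written. The doubling inequality holds only for $t\le t'_n:=M^{-1}(\beta_0/w_n)$, so it cannot be iterated beyond one step. Absorbing the extra terms would need a uniform bound on $w_nM(2\beta_n)$, which you do not have.

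The fallback is also unnecessary. Convexity gives $t'_n/\beta_n\ge\beta_0/\beta$, hence $\Phi'_n(t'_n)=(t'_n/\beta_n)^p\beta_0\ge(\beta_0/\beta)^p\beta_0$. So your same computation on $[0,t'_n]$ already yields the $\delta_2$-condition for $\Phi'$, with the smaller constant $(\beta_0/\beta)^p\beta_0$ in place of $\beta$.
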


\begin{proof} {From Proposition \ref{proposition:21} there holds} $\ell_M(w)\cong h_M(w)$ and {consequently} it follows that $\ell_{\{w_nM\}}\cong h_{\{w_nM\}}$.
	Therefore the Musielak-Orlicz function $\{w_nM\}_{n=1}^\infty$ has the $\delta_2$ condition at zero.
	Thus there are $\beta>0$ and $c_n\geq 0$, $\sum_{n=1}^\infty c_n<+\infty$ and there holds the inequality
	\begin{equation}\label{equation:27}
		w_nM(2t)\leq Kw_nM(t)+c_n, 
	\end{equation}
	provided $t\in [0,\beta_n]$, where $\beta=w_nM(\beta_{n})$.
	
	{For any $t\in [0,\beta_n]$, there holds $\frac{t^p}{\beta_n^p}\leq 1$. Using this inequality, the representation of the function $S$ and the $\delta_2$ property we get the }chain of inequalities  
	$$
	\begin{array}{lll}
		\displaystyle\frac{w_n}{\beta_n^p}S(2t)&=&\displaystyle\frac{w_n}{\beta_n^p}2^pt^pM(2t)\leq \frac{2^pt^p}{\beta_n^p}(Kw_nM(t)+c_n)\\
		&=&\displaystyle2^pK\frac{w_n}{\beta_n^p}t^pM(t)+{\frac{2^pt^p}{\beta_n^p}}c_n=2^pK\frac{w_n}{\beta_n^p}S(t)+{2^p}c_n\\
		&=&C\displaystyle\frac{w_n}{\beta_n^p}S(t)+b_n,  
	\end{array}
	$$
	for $t\in [0,\beta_n]$, where $\beta=w_nM(\beta_{n})$, and $\{b_n\}_{n=1}^\infty\in\ell_1$ {with $b_n=2^pc_n$,}
	we conclude that the Musielak-Orlicz function $\{w_n^\prime S(t)\}_{n=1}^\infty$
	has the $\delta_2$ condition at zero and therefore $\ell_S(w^\prime)\cong h_S(w^\prime)$.
\end{proof}

\begin{proposition}\label{proposition:23}
	Let $N$ be a non degenerate Orlicz function, $\{a_k\}_{k=1}^\infty$, $\{b_k\}_{k=1}^\infty$ and $M$ be defined with respect to (\ref{equation:26}), and $\displaystyle \lim_{k\to \infty}\frac{a_k}{b_k}=\infty$.
	
	Then $\alpha_{M}=1$. 
	
	If in addition $\mu a_{k+1}=b_{k+1}$ for some $\mu>0$ and $N\not\in\Delta_2$ then $\beta_M=+\infty$
\end{proposition}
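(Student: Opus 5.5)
The plan is to work directly with the indices $\alpha_M,\beta_M$ from their definitions, using the explicit piecewise form (\ref{equation:26}) of $M$. Two structural facts come first. Since $N$ is convex and $N(0)=0$, the quotient $N(t)/t$ is nondecreasing. Write the chord as $l_k(t)=s_kt+c_k$ with
\[
s_k=\frac{N(a_k)-N(b_k)}{a_k-b_k},\qquad c_k=\frac{a_kN(b_k)-b_kN(a_k)}{a_k-b_k}.
\]
Monotonicity of $N(t)/t$ then gives $c_k\le 0$. Moreover, the chord of a convex function lies above it, so $M\ge N$ everywhere, with equality at the points $a_k$ and $b_k$. I will also use that $\alpha_M\ge 1$ holds for every Orlicz function by convexity, since $M(uv)\le uM(v)$ for $u\in(0,1]$. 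So the first claim reduces to showing that no $p>1$ admits an estimate $M(uv)\le C u^pM(v)$ on $(0,1]^2$.

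For $\alpha_M=1$, fix $p>1$ and $\lambda\in(0,1)$. Take $v=a_k$ and $u=\lambda$. Because $b_k/a_k\to 0$ (the hypothesis $a_k/b_k\to\infty$), for large $k$ the point $\lambda a_k$ lies in $[b_k,a_k]$, where $M$ is linear. A direct computation gives
\[
\frac{M(\lambda a_k)}{M(a_k)}=1-(1-\lambda)\frac{s_ka_k}{N(a_k)},\qquad \frac{s_ka_k}{N(a_k)}=\frac{1-N(b_k)/N(a_k)}{1-b_k/a_k}.
\]
Here $N(b_k)/N(a_k)\to 0$, which follows from the summability assumed in the construction or from $N(b_k)\le (b_k/a_k)N(a_k)$. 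Together with $b_k/a_k\to 0$ this gives $s_ka_k/N(a_k)\to 1$. Hence $M(\lambda a_k)/M(a_k)\to\lambda$, and therefore
\[
\frac{M(\lambda a_k)}{\lambda^pM(a_k)}\to\lambda^{1-p}.
\]
This is unbounded as $\lambda\downarrow 0$, so no constant $C$ works for $p>1$. Read in the standard Lindenstrauss–Tzafriri sense, this shows $\alpha_M\le 1$, and hence $\alpha_M=1$.

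For $\beta_M=+\infty$, I use the equivalence, recalled in the preliminaries, that $\beta_M<\infty$ if and only if $M\in\Delta_2$. I would argue by contradiction: assume $M(2t)\le KM(t)$ for $t\le t_0$ and deduce $N\in\Delta_2$. There are two cases. If $t$ and $2t$ both lie in pieces where $M=N$, then $N(2t)\le M(2t)\le KM(t)=KN(t)$ immediately. In general I use $N\le M$ on the left-hand side, so it suffices to bound $M(t)$ above by a constant multiple of $N(ct)$. On $[b_k,a_k]$ we have $M(t)\le N(a_k)$. The hypothesis $b_{k}=\mu a_{k}$ says the linear pieces have bounded logarithmic length, so $a_k\le \mu^{-1}b_k\le\mu^{-1}t$. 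Choosing $m$ with $2^m\ge\mu^{-1}$ then gives
\[
N(2t)\le M(2t)\le KM(t)\le KN(a_k)\le KN(2^mt).
\]
Iterating $\Delta_2$ for $M$ converts this into $N(2^{m+1}s)\le K' N(s)$, hence $N\in\Delta_2$, which contradicts the hypothesis.

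The main obstacle is this last comparison. The inequalities between $M$ and $N$ go the wrong way on the linear pieces, so I expect to need the doubling iteration carefully to pass from $N(2t)\le K N(2^m t)$ to a genuine $\Delta_2$ estimate $N(2s)\le K'' N(s)$. This requires controlling $M(t)$ from above by $N$ at a comparable point, using $b_k=\mu a_k$ and $M\ge N$. If that manipulation fails, the fallback is to exhibit directly points $t_j\to 0$ inside intervals $[a_{k+1},b_k]$, where $M=N$, with $N(2t_j)/N(t_j)\to\infty$. Such points exist because $N\notin\Delta_2$ and the constant ratio $\mu$ controls where these witness points can fall.
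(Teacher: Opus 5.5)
Your proof that $\alpha_M=1$ is correct, and it differs from the paper's only in the choice of test points. You fix $u=\lambda$ and $v=a_k$, show $M(\lambda a_k)/M(a_k)\to\lambda$, and then let $\lambda\downarrow 0$. For this you need $N(b_k)/N(a_k)\to 0$, which you correctly get from the monotonicity of $N(t)/t$. The paper instead lets $u=t_k\to 0$ with $uv=\gamma_ka_k+(1-\gamma_k)b_k$ and $\gamma_k=b_k/((p-1)(a_k-b_k))$, drops the $M(b_k)$ term, and obtains the explicit lower bound $C_p(a_k/b_k)^{p-1}$. Your route is simpler; the paper's gives a growth rate.

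The argument for $\beta_M=+\infty$ has a genuine gap. You read the extra hypothesis as $b_k=\mu a_k$, i.e.\ linear pieces $[b_k,a_k]$ of bounded ratio, and your key step $a_k\le\mu^{-1}b_k\le\mu^{-1}t$ rests on this. But that reading contradicts $a_k/b_k\to\infty$, which is still assumed and which you used for $\alpha_M$. So you have only treated an empty case. The paper's proof takes $\mu>1$ and uses $M=N$ at both $a_{k+1}$ and $\mu a_{k+1}$. Its Example has $b_k=2(k+1)^{-2(k+1)}=2a_{k+1}$. Both show that the intended condition is $b_k=\mu a_{k+1}$: the $N$-pieces $[a_{k+1},b_k]$ have bounded ratio, while the linear pieces are long. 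In that regime $a_k\le\mu^{-1}b_k$ is false. Moreover, the comparison $M(t)\le N(ct)$ that you need cannot come from the lengths of the pieces: in the Example, $M(2b_k)/N(2cb_k)\to\infty$ for every fixed $c$.

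Your fallback is essentially the paper's argument: $M=N$ on $[a_{k+1},\mu a_{k+1}]$, so $M(\mu a_{k+1})/M(a_{k+1})=N(\mu a_{k+1})/N(a_{k+1})$. However, ``the constant ratio $\mu$ controls where these witness points can fall'' is not a proof. $N\notin\Delta_2$ only yields some $t_j\to 0$ with $N(2t_j)/N(t_j)\to\infty$, and nothing forces these points into the $N$-pieces. The paper's one-line deduction needs the same unstated input; it works in the Example only because there $N(2t)/N(t)=4e^{1/(4t)}\to\infty$.

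Your contradiction strategy can still be completed, without using $\mu$ at all. Suppose $M(2t)\le KM(t)$ for $t\le t_0$. For large $k$ we have $2b_k\le a_k$ and $b_k\le t_0$, and $M(2b_k)=l_k(2b_k)=N(b_k)+s_kb_k$. So the doubling inequality at the kink $t=b_k$ gives $s_kb_k\le (K-1)N(b_k)$. Hence, for $t\in[b_k,a_k]$, using that $N(t)/t$ is nondecreasing,
\[
M(t)\le N(b_k)+s_kb_k\,\frac{t}{b_k}\le N(t)+(K-1)N(t)=KN(t).
\]
Therefore $N\le M\le KN$ near $0$, and $N(2t)\le M(2t)\le KM(t)\le K^2N(t)$. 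Thus $N\in\Delta_2$, a contradiction. This also replaces your doubling iteration, whose displayed outcome $N(2t)\le KN(2^mt)$ is trivially true and carries no information.
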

\begin{proof}
	Let us define $\lambda_k=a_k$ and $\displaystyle t_k=\frac{\gamma_k a_k +(1-\gamma_k)b_k}{a_k}$, for $\gamma_k\in[0,1]$. Then, 
	\begin{equation}\label{equation:28}
		\begin{array}{ll}
			&\displaystyle \frac{M(\lambda_k t_k)}{t_k^p M(\lambda_k)}=\frac{(1-\gamma_k)M(b_k)+\gamma_k M(a_k)}{M(a_k)}\left(\frac{\gamma_k a_k +(1-\gamma_k)b_k}{a_k}\right)^{-p}\\
			\geq &\displaystyle \left(\frac{\gamma^{1-1/p}_k a_k +\gamma^{-1/p}_k(1-\gamma_k)b_k}{a_k}\right)^{-p}.
		\end{array}
	\end{equation}
	
	We can calculate that if $a<p (a-b)$, and $0<b<a$, then $\displaystyle 0< \frac{b}{(p-1)(a-b)}<1$. By assumption $\displaystyle \lim_{k\to \infty}\frac{a_k}{b_k}=\infty$. Therefore, for every $p>1$, there is $N_p\in \mathbb{N}$, so that $\displaystyle 0< \frac{b_k}{(p-1)(a_k-b_k)}<1$, for any $k>N_p$. Let us substitute $\displaystyle \gamma_k=\frac{b_k}{(p-1)(a_k-b_k)}$ in (\ref{equation:28}), for $k>N_p$. Then, we obtain $\displaystyle \frac{M(\lambda_k t_k)}{t_k^p M(\lambda_k)}\geq C_p\frac{a_k^p b_k^{1-p}}{a_k-b_k}\geq C_p \left(\frac{a_k }{b_k}\right)^{p-1}$, where $C_p= (p-1)^{p-1}p^{-p}$. Then from  $\displaystyle \lim_{k\to \infty}\frac{a_k}{b_k}=\infty$, it follows that $\displaystyle \lim_{k\to \infty}\frac{M(\lambda_k t_k)}{t_k^p M(\lambda_k)}=\infty$ for every $p>1$. Therefore $\alpha_{M}\leq1$, but $M$ is a convex function, thus $\alpha_{M}=1$.
	
	Let $\mu>1$ be so that $\mu a_{k+1}=b_{k+1}$. Thus $a_{k+1},\mu a_{k+1}\in [a_{k+1},b_{k+1}]$. Then from $N\not\in\Delta_2$ and
	$$
	\frac{M(\mu a_{k+1})}{M(a_{k+1})}=\frac{N(\mu a_{k+1})}{N(a_{k+1})}
	$$
	it follows that $M\not\in\Delta_2$ and consequently $\beta_M=+\infty$.
	
\end{proof}

The considered Musielak-Orlicz sequence spaces in \cite{Zlatanov}, Proposition \ref{proposition:21}
do not satisfy $\alpha_M\in (1,2)$. Therefore we will slightly change the function $M$.

\begin{proposition}\label{proposition:24}
	Let $\varphi$ and $\psi$ be two Orlicz functions, such that $\varphi(t)=t^q\psi(t)$, for some $q>0$. 
	Then $\alpha_\varphi=\alpha_\psi+q$ and $\beta_\varphi=\beta_\psi+q$
\end{proposition}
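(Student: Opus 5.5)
Directly from the definitions of $\alpha$ and $\beta$: the plan is to show that the two quotients appearing in them transform by a pure power shift. For $u,v\in(0,1]$ and any exponent $r$ we have
\[
\frac{\varphi(uv)}{u^{r}\varphi(v)}=\frac{u^qv^q\psi(uv)}{u^{r}v^q\psi(v)}=\frac{\psi(uv)}{u^{r-q}\psi(v)} .
\]
So the quantity tested for exponent $r$ in the definition of $\alpha_\varphi$ (resp.\ $\beta_\varphi$) equals the quantity tested for exponent $r-q$ in the definition of $\alpha_\psi$ (resp.\ $\beta_\psi$). Since both sides are the same function of $(u,v)$, their suprema and infima over $(0,1]^2$ coincide.

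For $\alpha_\varphi$ this gives
\[
\Big\{r: \sup_{u,v\in(0,1]}\frac{\varphi(uv)}{u^r\varphi(v)}>0\Big\}=\Big\{s+q: \sup_{u,v\in(0,1]}\frac{\psi(uv)}{u^{s}\psi(v)}>0\Big\},
\]
and taking the supremum yields $\alpha_\varphi=\alpha_\psi+q$. The same substitution $r=s+q$ in the set defining $\beta$, followed by the infimum, gives $\beta_\varphi=\beta_\psi+q$. This includes the case $\beta_\psi=+\infty$, since then both sets are empty and the convention $\inf\emptyset=+\infty$ applies to both.

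The one point needing care is that the quotients are well defined. This requires $\psi(v)>0$ for $v\in(0,1]$, i.e.\ non-degeneracy, which holds for the functions considered here (if $\psi(v)=0$ then $\varphi(v)=0$ as well, and such $v$ are excluded from both definitions in the same way). The whole argument is a change of variables in the index sets, so I expect no real obstacle beyond stating the bijection $r\leftrightarrow r-q$ precisely.
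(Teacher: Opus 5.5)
Your proof is correct and follows the same approach as the paper. The paper records only the identity $\frac{\varphi(\lambda t)}{t^p\varphi(\lambda)}=\frac{\psi(\lambda t)}{t^{p-q}\psi(\lambda)}$ and concludes, whereas you also spell out that the defining index sets are translates of each other by $q$ and check that the quotients are well defined. Because the translation argument does not depend on the threshold condition, it also covers the standard Lindenstrauss--Tzafriri definition of $\alpha_M$, which uses ``$<\infty$'' instead of the ``$>0$'' printed in the paper.
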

\begin{proof}
	From
	$$
	\displaystyle\frac{\varphi(\lambda t)}{t^p \varphi(\lambda)}
	=\displaystyle\frac{\lambda^qt^q \psi(\lambda t)}{t^p \lambda ^q\psi(\lambda)}
	=\frac{\psi(\lambda t)}{t^{p-q}\psi(\lambda)}.
	$$ 
	we get the proof.
\end{proof}

\begin{example}
	Let $N(t)=t^2e^{-\frac{1}{2t}}$ and
	$w_k=\displaystyle\frac{1}{N(1/k^{2k})}$, $k\in\mathbb{N}$. We define the sequences
	$\{a_k\}_{k=1}^\infty$, $\{b_k\}_{k=1}^\infty$ by
	$a_k=\displaystyle\frac{1}{k^{2k}}$, $b_k=\displaystyle\frac{2}{(k+1)^{2(k+1)}}$.
\end{example}  
From Proposition \ref{proposition:21} it follows
that $\ell_M(w)\cong h_M(w)$ and therefore there is $\beta$ so that the Musielak-Orlicz functions 
$\Phi_n(t)=w_nM(t)$ satisfy (\ref{equation:27}) for $t\in [0,\Phi_n^{-1}(\beta)]$. 

Let us choose $p\in (0,1)$ and put $w_n^\prime=\frac{w_n}{\beta_n^p}$, where $\beta_n$ are the solutions of the equations $w_nM(\beta_n)=\beta$. Let put $S(t)=t^pM(t)$.

We get a weighted Orlicz sequence space $\ell_S(w^\prime)$, such that
by Proposition \ref{proposition:22} $\ell_S(w^\prime)\cong h_S(w^\prime)$, and by Proposition \ref{proposition:24} $\alpha_S=\alpha_M+p=p+1<2$, $\beta_S=\beta_M=+\infty$.

Therefore there is an equivalent Fr\'{e}chet differentiable norm and there is no
nontrivial Fr\'{e}chet differentiable bump function $b:\ell_M(w)\to\mathbb{R}$ with and estimate
$$
b(x+h)=b(x)+b^{\prime}(x)(h)+O(\|h\|^{1+p}).
$$

\section{Conclusion}\label{sec13}

We proved that the set of perturbations for which a perturbed lower semi-continuous function is WPMC contains not only dense $G_\delta$ set but is also a complement to a $\sigma$-porous set in the entire space of perturbations. In such a way we generalize an abstract variational principle in Banach spaces \cite{Topalova-Zlateva}.  Furthermore, the stronger idea of Tikhonov well posedness takes the role of WPMC if the space is a Musielak-Orlicz sequence space satisfying $\ell_\Phi\cong h_{\Phi}$ and the domain of the function under consideration is in the positive octane of the space. Additionally, the multi-valued mapping assigning a parameter to the solution set is single-valued and upper semi-countinuous on the complement of a $\sigma$-porous set. We provide a number of applications. The first is that by proving the validity of Stegall's variational principle, the Musielak-Orlicz sequence spaces are dentable and possess the Radon-Nikodym property. Consequently, their dual spaces are $w^*$-Asplund spaces. We established, 
by a different argument, a sufficient condition under which the sequence spaces of Musielak-Orlicz and Nakano type are Asplund ones. 
We presented applications assessing the type of smoothness of specific Musielak-Orlicz, Nakano, and weighted Orlicz sequence spaces. We show that an Orlicz function can be considered without the $\Delta_2$ condition by selecting a certain weighted sequence $\{w_n\}_{n=1}^\infty$ to obtain $\ell_M(w)\cong h_M(w)$ and apply the main conclusions.



{\it Author Contribution:} 
All authors contributed equally and significantly in writing this article and are listed in alphabetical order. All authors read
and approved the final manuscript.

\bibliographystyle{plain}
\bibliography{sn-bibliography-23-01-2026}

\end{document}